\newtheorem{theorem}{Theorem}[section]
\newtheorem{lemma}[theorem]{Lemma}
\newtheorem{corollary}[theorem]{Corollary}
\newtheorem{proposition}[theorem]{Proposition}
\theoremstyle{definition}
\newtheorem{example}[theorem]{Example}
\theoremstyle{remark}
\newtheorem{claim}[theorem]{Claim}
\newtheorem{remark}[theorem]{Remark}
\newtheorem{question}[theorem]{Question}
\newtheorem{assumption}[theorem]{Assumption}
\newcommand{\sshf}[1]{\mathcal{O}_{#1}}
\newcommand{\shf}[1]{\mathcal{#1}}
\newcommand{\prj}[1]{\mathbb{P}^{#1}}
\newcommand{\iso}{\simeq}
\newcommand{\ses}[3]{0\rightarrow#1\rightarrow#2\rightarrow#3\rightarrow{0}}
\newcommand{\is}[1]{\mathscr{I}_{#1}}
\newcommand{\paren}[1]{\left(#1\right)}
\numberwithin{equation}{section}
\begin{document}
\allowdisplaybreaks
\title{Singularities of secant varieties}

%    Information for first author
\author{Chih-Chi Chou}
\author{Lei Song}

%    Address of record for the research reported here
\address{Department of Mathematics, University of Washington, Seattle, WA 98195}
%    Current address
%\curraddr{Department of Mathematics and Statistics,
%University of Illinois at Chicago, Chicago, IL 60607}
\email{cchou9@math.washington.edu}

\address{Department of Mathematics, University of Kansas, Lawrence, KS 66045}
\email{lsong@ku.edu}
%    \thanks will become a 1st page footnote.
%\thanks{The first author was supported in part by NSF Grant \#000000.}

%\date{Feb 19th, 2015}

\dedicatory{}

\keywords{secant varieties, Du Bois singularities, Cohen-Macaulayness, rational singularities}

\begin{abstract}
We study the singularities of the secant variety $\Sigma(X,L)$ associated to
a smooth variety $X$ embedded by a sufficiently positive adjoint line bundle $L$.
We show that $\Sigma(X,L)$ is always Du Bois singular. Examples of secant varieties with worse singularities when $L$ has weak positivity are provided. We also give a necessary and sufficient condition for $\Sigma(X, L)$ to have rational singularities.
\end{abstract}

\maketitle

\section{Introduction}

Given a smooth projective variety $X$ of dimension $n$ over an algebraically closed field of characteristic zero, and a very ample line bundle $L$ that embeds $X$ into a projective space $\prj{N}$. The (first) secant variety $\Sigma(X,L)$ is defined as the Zariski closure of the union of 2-secant lines to $X$ in $\mathbb{P}^{N}$.
Recently, Ullery \cite{Ullery14} gave a sufficient condition on $L$ for the normality of the secant variety $\Sigma(X, L)$, completing the results of Vermeire.
She showed that, among other things, when $X$ is a curve, $\Sigma(X,L)$ is normal if $\deg{L}\ge 2g+3$; when $n\ge 2$, $\Sigma(X,L)$ is normal
if $L=\omega_X\otimes A^{2(n+1)}\otimes B$ , where $A,B$
are very ample and nef line bundles, respectively.

Inspired by the paper \cite{Ullery14}, we study the singularities of $\Sigma(X, L)$ from the cohomological point of view.
To state results in a uniform way,
throughout the paper we make the following assumption on $L$ unless otherwise stated (In Section 5, the situation when $L$ has weaker positivity is discussed).
\begin{assumption}\label{assumption}
For $n\ge 2$, we assume that $L=\omega_X\otimes A^{2(n+1)}\otimes B$, where $A$ is very ample and $B$ is a nef line bundles respectively. For $X$ being a curve of genus $g$, we assume that $\deg{L}\ge 2g+3$.
\end{assumption}
According to a result of Ein and Lazarsfeld \cite{EL93} (in case $n\ge 2$), such $L$ satisfies Property $N_{n+1}$, i.e.~$X$ embeds in $\prj{N}$ under the complete linear system $|L|$ as a projectively normal variety, the homogeneous ideal of $X$ is generated by quadrics, and the minimal graded free resolution of $\sshf{X}$ is linear up to $(n+1)$-th step. One may expect that the singularities of $\Sigma(X, L)$ will be somewhat well behaved if $L$ satisfies the assumption.
Our first result confirms this expectation in the sense that
\begin{theorem}(cf.~Theorem \ref{Du Bois})\label{main theorem}
Under Assumption \ref{assumption}, $\Sigma(X, L)$ has Du Bois singularities.
\end{theorem}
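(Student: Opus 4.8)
The plan is to analyze $\Sigma := \Sigma(X,L)$ through an explicit resolution together with the descent triangle for the zeroth Du Bois complex. First I would recall the desingularization coming from the Hilbert scheme of two points: writing $X^{[2]}$ for $\mathrm{Hilb}^2(X)$ and $E_L$ for the rank-two secant bundle obtained by pushing forward $L$ along the universal family $\Xi\subset X^{[2]}\times X$, the projectivization $\Sigma^+:=\mathbb{P}(E_L)$ is smooth and the natural map $\pi\colon\Sigma^+\to\Sigma\subset\prj{N}$, sending a length-two scheme together with a point of its spanning line to that point, is birational. By Ullery's normality result $\Sigma$ is normal, so $\pi_*\mathcal{O}_{\Sigma^+}=\mathcal{O}_\Sigma$; moreover $\mathrm{Sing}\,\Sigma=X$ is smooth, $\pi$ is an isomorphism over $\Sigma\setminus X$, and—because the positivity of $L$ forbids longer secant incidences—the exceptional locus $D:=\pi^{-1}(X)$ is identified with the smooth universal family $\Xi$, so that $\pi|_D\colon D\to X$ is the second projection, with fibers isomorphic to $\mathrm{Bl}_x X$ and hence $R^i(\pi|_D)_*\mathcal{O}_D\cong H^i(X,\mathcal{O}_X)\otimes\mathcal{O}_X$.

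Next I would feed the abstract-blow-up square associated to $(\Sigma,X,\Sigma^+,D)$ into the standard distinguished triangle
\[
\underline{\Omega}^0_\Sigma \longrightarrow \underline{\Omega}^0_X \oplus R\pi_*\underline{\Omega}^0_{\Sigma^+} \longrightarrow R\pi_*\underline{\Omega}^0_D \xrightarrow{+1}.
\]
Since $X$, $\Sigma^+$ and $D$ are all smooth, the three terms on the right are the structure sheaves, so the triangle becomes $\underline{\Omega}^0_\Sigma\to\mathcal{O}_X\oplus R\pi_*\mathcal{O}_{\Sigma^+}\to R\pi_*\mathcal{O}_D$. Passing to cohomology sheaves and using $\pi_*\mathcal{O}_{\Sigma^+}=\mathcal{O}_\Sigma$ and $(\pi|_D)_*\mathcal{O}_D=\mathcal{O}_X$, the map in degree zero is $(\alpha,\beta)\mapsto\alpha-\beta|_X$, which is surjective with kernel $\mathcal{O}_\Sigma$; hence $\mathcal{H}^0(\underline{\Omega}^0_\Sigma)=\mathcal{O}_\Sigma$ and the comparison map $\mathcal{O}_\Sigma\to\underline{\Omega}^0_\Sigma$ is an isomorphism in degree zero automatically. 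Chasing the remaining long exact sequence, $\Sigma$ is Du Bois if and only if $\mathcal{H}^i(\underline{\Omega}^0_\Sigma)=0$ for all $i\ge1$, which is equivalent to the restriction maps
\[
R^i\pi_*\mathcal{O}_{\Sigma^+} \longrightarrow R^i(\pi|_D)_*\mathcal{O}_D
\]
being isomorphisms for every $i\ge1$.

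The heart of the argument is therefore this comparison, which via the ideal-sheaf sequence $0\to\mathcal{O}_{\Sigma^+}(-D)\to\mathcal{O}_{\Sigma^+}\to\mathcal{O}_D\to0$ reduces to the single cohomological vanishing
\[
R^i\pi_*\mathcal{O}_{\Sigma^+}(-D)=0 \qquad\text{for all } i\ge1.
\]
This is exactly the step where Assumption \ref{assumption} is genuinely used: the very ampleness in $L=\omega_X\otimes A^{2(n+1)}\otimes B$ forces $E_L$ to be sufficiently positive, and I would establish the vanishing by expressing $\mathcal{O}_{\Sigma^+}(-D)$ through the relative hyperplane class of $\mathbb{P}(E_L)\to X^{[2]}$ twisted by a pullback from $X^{[2]}$, then applying relative Kawamata--Viehweg vanishing (or Grauert--Riemenschneider after a discrepancy computation $K_{\Sigma^+}=\pi^*K_\Sigma+\cdots$) along the composite $\Sigma^+\to X^{[2]}$. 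I expect this vanishing to be the main obstacle, both because it is the only place the positivity hypothesis does real work and because it is precisely the locus of the dichotomy with rational singularities: $\Sigma$ has rational singularities exactly when the stronger statement $R^i\pi_*\mathcal{O}_{\Sigma^+}=0$ holds, i.e.\ when $H^i(X,\mathcal{O}_X)=0$ for $i\ge1$, whereas the Du Bois property only requires the weaker matching of higher direct images that the triangle supplies for free once the $(-D)$-vanishing is in hand.
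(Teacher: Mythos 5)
Your reduction is sound and, interestingly, it follows the paper's Theorem \ref{iff} rather than its actual proof of Theorem \ref{Du Bois}: you run the descent triangle for the abstract blow-up square $(\Sigma, X, \mathbb{P}(E_L), \Phi)$, use normality (hence seminormality) to identify $h^0(\underline{\Omega}^0_{\Sigma})$ with $\mathcal{O}_{\Sigma}$, and conclude that $\Sigma$ is Du Bois if and only if $R^it_*\mathcal{O}_{\mathbb{P}(E_L)}(-\Phi)=0$ for $i>0$. The paper instead feeds the isomorphism $\mathcal{R}t_*\mathcal{O}_{\mathbb{P}(E_L)}(-\Phi)\cong \is{X/\Sigma}$ into the splitting criterion of Kov\'acs--Koll\'ar (\cite[Theorem 1.6]{KK}); both routes rest on exactly the same vanishing statement, namely Proposition \ref{vanishing of higher direct images}.

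The genuine gap is in your proposed proof of that vanishing. Relative Kawamata--Viehweg ``along the composite $\mathbb{P}(E_L)\to X^{[2]}$'' computes higher direct images for the wrong morphism: the bundle map has one-dimensional fibers, whereas $R^it_*$ is governed by the $n$-dimensional fibers $F_x\cong Bl_xX$ of $t$ over points of $X$. If you instead try relative vanishing or Grauert--Riemenschneider along $t$ itself, writing $\mathcal{O}(-\Phi)=\omega_{\mathbb{P}(E_L)}\otimes\bigl(\omega_{\mathbb{P}(E_L)}^{-1}(-\Phi)\bigr)$, adjunction gives $(K_{\mathbb{P}(E_L)}+\Phi)|_{F_x}=K_{F_x}$, so you would need $-K_{Bl_xX}$ to be nef and big on every fiber; this fails already for $X$ a curve of positive genus, and in any case the twist sees nothing of the positivity of $L$, which Proposition \ref{nonvanishing of higher direct image} shows is genuinely needed (the vanishing fails when $H^n(X,L)\neq 0$). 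The paper's actual argument is different in kind: by the theorem on formal functions one must kill $H^i\bigl(F_x,\mathcal{O}(-\Phi)\otimes\mathcal{O}/\is{F_x}^k\bigr)$ for all $k$, and the induction on $k$ uses Ullery's splitting $\shf{N}^*_{F_x/\mathbb{P}(E_L)}\cong\mathcal{O}_{F_x}^{\oplus n}\oplus b_x^*L(-2E_x)$ to reduce everything to Kodaira vanishing for $b_x^*L^j(-2jE_x)$ on $Bl_xX$; it is precisely the adjoint shape $L=\omega_X\otimes A^{2(n+1)}\otimes B$ that makes each such twist of the form $\omega_{F_x}\otimes(\text{ample})$. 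Without this fiberwise computation your argument does not close.
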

The notion of Du Bois singularities is originated from complex geometry and plays an important role in classification of algebraic varieties as shown in \cite{KK}.
Roughly speaking, a variety has Du Bois singularities if its cohomological behavior is the same as a simple normal crossing variety.
From this point of view, the notion of Du Bois singularities is a generalization
of rational singularities.

As a result, it is natural to ask whether a secant variety has rational singularities in general. Before answering this question, we recall the Kempf's criterion for rational singularities: given a normal variety $Z$ and a resolution of singularities $f: Y\rightarrow Z$,
then $Z$ has rational singularities if and only if $Z$ is Cohen-Macaulay and
$f_*\omega_Y=\omega_Z$.
So we prove the following two theorems.
\begin{theorem}(= Theorem \ref{CM})\label{CM1}
Under Assumption \ref{assumption}, for any closed point $x\in X \subset \Sigma(X,L)$ we have
\begin{equation*}
    \text{depth}_x\Sigma(X, L)=n+2+\max\{i \:|\: i\le n-1, \text{and}\; H^j(X, \sshf{X})=0,  1\le j\le i\}
\end{equation*}
Here we adopt the convention that if the set $\{i \:|\: i\le n-1, \text{and}\; H^j(X, \sshf{X})=0,  1\le j\le i\}$ is empty, then the max is 0.
\end{theorem}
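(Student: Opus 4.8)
The plan is to determine the dualizing complex $\omega_\Sigma^\bullet$ through the resolution and then extract the depth by local duality. Write $\pi\colon W\to\Sigma$ for the resolution furnished by the $\mathbb{P}^1$-bundle over the Hilbert scheme $X^{[2]}$ of length-two subschemes (set up above): it is an isomorphism over $\Sigma\setminus X$, the variety $W$ is smooth of dimension $2n+1$, and the scheme-theoretic fibre over a point $x\in X$ is the blow-up $\mathrm{Bl}_xX$, so that $H^q(\pi^{-1}(x),\mathcal{O})\cong H^q(X,\mathcal{O}_X)$. This isomorphism is the source of the groups $H^q(X,\mathcal{O}_X)$ in the statement. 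Since $\Sigma$ is normal by Ullery's theorem \cite{Ullery14}, one has $\pi_*\mathcal{O}_W=\mathcal{O}_\Sigma$, and the sheaves $R^q\pi_*\mathcal{O}_W$ for $q\ge 1$ are coherent and supported on $X$.

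The structural heart is to isolate the failure of Cohen--Macaulayness. Writing $\mathbb{D}_\Sigma(-)=R\mathcal{H}om_\Sigma(-,\omega_\Sigma^\bullet)$, Grothendieck duality for $\pi$ gives $\mathbb{D}_\Sigma(R\pi_*\mathcal{O}_W)=R\pi_*\omega_W^\bullet=(R\pi_*\omega_W)[2n+1]$, and by Grauert--Riemenschneider vanishing $R^q\pi_*\omega_W=0$ for $q\ge 1$ (here $\pi$ is proper and birational, $W$ is smooth, and we are in characteristic zero); hence this complex is a single sheaf placed in degree $-(2n+1)$. Dualizing the truncation triangle $\mathcal{O}_\Sigma\to R\pi_*\mathcal{O}_W\to \tau_{\ge 1}R\pi_*\mathcal{O}_W\xrightarrow{+1}$ therefore yields a triangle $\mathbb{D}_\Sigma(\tau_{\ge 1}R\pi_*\mathcal{O}_W)\to (R\pi_*\omega_W)[2n+1]\to \omega_\Sigma^\bullet\xrightarrow{+1}$, from which $\mathcal{H}^j(\omega_\Sigma^\bullet)\cong \mathcal{H}^{j+1}\bigl(\mathbb{D}_\Sigma(\tau_{\ge 1}R\pi_*\mathcal{O}_W)\bigr)$ for every $j>-(2n+1)$. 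In other words, the entire non--Cohen--Macaulay part of the dualizing complex is governed by the dual of $\tau_{\ge 1}R\pi_*\mathcal{O}_W$, hence by the sheaves $R^q\pi_*\mathcal{O}_W$.

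I would then compute $R^q\pi_*\mathcal{O}_W$ by the theorem on formal functions together with cohomology-and-base-change along $X$, showing that for $1\le q\le n$ it is a sheaf on $X$ with fibre $H^q(X,\mathcal{O}_X)$, locally free when nonzero. Dualizing such a sheaf on the smooth $n$-fold $X$ places it, after the shift by $q$ coming from its position in $\tau_{\ge 1}R\pi_*\mathcal{O}_W$, in degree $-(n+q)$; by the isomorphism of the previous paragraph this gives $\mathcal{H}^{-(n+1+q)}(\omega_\Sigma^\bullet)\ne 0$, and hence, via local duality $H^i_{\mathfrak{m}_x}(\mathcal{O}_\Sigma)^\vee\cong \mathcal{H}^{-i}(\omega_\Sigma^\bullet)_x$, a nonzero $H^{\,n+1+q}_{\mathfrak{m}_x}(\mathcal{O}_\Sigma)$ whenever $H^q(X,\mathcal{O}_X)\ne 0$. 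The smallest such index is attained at $q=1+c$, giving $\mathrm{depth}_x\Sigma=n+2+c$ with $c=\max\{\,i\le n-1:H^j(X,\mathcal{O}_X)=0\ \text{for } 1\le j\le i\,\}$. The ceiling $i\le n-1$ is forced automatically: the contribution of $H^n(X,\mathcal{O}_X)$ lands in degree $-(2n+1)$, precisely where the dualizing sheaf of $\Sigma$ already lives, so it never produces a defect in a higher degree and never lowers the depth below the bound imposed by $\dim\Sigma=2n+1$.

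I expect the main obstacle to be twofold. First is the honest computation of the higher direct images $R^q\pi_*\mathcal{O}_W$: one must control the formal neighbourhood of $\pi^{-1}(x)=\mathrm{Bl}_xX$ inside $W$ (in particular the relevant conormal geometry) and verify the base-change statements needed to pin the fibres to $H^q(X,\mathcal{O}_X)$; this is where the positivity in Assumption \ref{assumption} is consumed, both to ensure that $\pi$ is the expected resolution and to suppress all ``twisted'' contributions. Second is the duality bookkeeping along the singular locus: because $X=\mathrm{Sing}(\Sigma)$, the complex $i^!\omega_\Sigma^\bullet$ for the inclusion $i\colon X\hookrightarrow\Sigma$ is not the naive canonical complex of $X$, so identifying the cohomology sheaves of $\mathbb{D}_\Sigma(\tau_{\ge 1}R\pi_*\mathcal{O}_W)$ and their stalks at $x$ requires a careful codimension and relative-duality analysis. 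As an independent check one may use the transverse-slice picture: along the smooth part of $X$ the singularity of $\Sigma$ should be analytically a product of $\mathbb{A}^n$ with the affine cone over a copy of $X$ re-embedded by a sufficiently positive line bundle $\mathcal{M}$ inherited from $L$, whose depth at the vertex is the classical $1+\min\{\,j\ge 1:\bigoplus_k H^j(X,\mathcal{M}^k)\ne 0\,\}$; the positivity of $\mathcal{M}$ collapses this to the $k=0$ terms $H^j(X,\mathcal{O}_X)$ and reproduces $n+2+c$.
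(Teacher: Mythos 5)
Your argument is correct in outline, but it takes a genuinely different route from the paper. The paper's proof is global: it invokes the criterion of \cite[Lemma 2.3]{AH12} that $\text{depth}_x\ge k$ everywhere iff $H^i(\Sigma,\sshf{\Sigma}(-r))=0$ for $i<k$ and $r\gg 0$, then computes these groups via Kawamata--Viehweg vanishing for the big and nef bundle $\sshf{\mathbb{P}(E_L)}(1)$, the sequence $0\to\sshf{\mathbb{P}(E_L)}(-r)(-\Phi)\to\sshf{\mathbb{P}(E_L)}(-r)\to\sshf{\Phi}(-r)\to 0$, and the Leray spectral sequence for $q:\Phi\to X$ with Serre vanishing (plus a small diagram chase to kill the groups in degrees $n$ and $n+1$). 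You instead work locally with the dualizing complex: Grothendieck duality plus Grauert--Riemenschneider turns the truncation triangle for $R\pi_*\mathcal{O}_W$ into an identification of $\mathcal{H}^j(\omega_\Sigma^\bullet)$ for $j>-(2n+1)$ with the shifted dual of $\tau_{\ge 1}R\pi_*\mathcal{O}_W$, and local duality converts this into the depth; your degree bookkeeping ($\mathcal{H}^{-(n+1+q)}(\omega_\Sigma^\bullet)\neq 0$ exactly when $H^q(X,\sshf{X})\neq 0$, with the $q=n$ term harmlessly landing in degree $-(2n+1)$) checks out and reproduces $n+2+c$. Both proofs ultimately consume the same technical input, namely $R^it_*\sshf{\mathbb{P}(E_L)}(-\Phi)=0$ for $i>0$ (Proposition \ref{vanishing of higher direct images}) together with $R^iq_*\sshf{\Phi}\iso H^i(X,\sshf{X})\otimes\sshf{X}$ (Lemma \ref{higher direct images of structure sheaf}), which give $R^q\pi_*\mathcal{O}_W\iso H^q(X,\sshf{X})\otimes\sshf{X}$; you correctly flag this as the main obstacle but leave it as a deferred computation, so to be complete you should route it through these two results rather than redoing formal functions from scratch. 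What your approach buys is more information for the same price --- the full cohomology of $\omega_\Sigma^\bullet$, from which Theorem \ref{pushforward of dualizing sheaf} essentially also falls out (indeed your method is close in spirit to the paper's proof of Claim \ref{claim}) --- while the paper's twisting argument avoids derived-category and local-duality bookkeeping entirely.
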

Note that with the assumption on $L$, $\Sigma(X,L)\backslash X$ is smooth
and dim$\Sigma(X,L)=2n+1$, so it follows from Theorem \ref{CM1} that $\Sigma(X,L)$ is Cohen-Macaulay if and only if $H^i(X, \mathcal{O}_X)=0$, for all $1\le i \le n-1$.
On the other hand, the second condition of rational singularities
is controlled by the top cohomology group as the following shows.
\begin{theorem}(= Theorem \ref{pushforward of dualizing sheaf})\label{push dualizing sheaf}
Under Assumption \ref{assumption}.
Let $t:\mathbb{P}(E_L)\rightarrow \Sigma(X,L)$ be the natural resolution of singularities (see \S 2). Then $t_*\omega _{\mathbb{P}(E_L)}\iso \omega_{\Sigma(X, L)}$ if and only if $H^n(X, \mathcal{O}_X)=0$.
\end{theorem}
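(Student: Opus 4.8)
The plan is to show that the natural trace map $t_{*}\omega_{Y}\to\omega_{\Sigma}$, where I abbreviate $\Sigma=\Sigma(X,L)$ and $Y=\mathbb{P}(E_L)$, is injective with cokernel supported on $X$, and then to compute that cokernel. Since $t$ is an isomorphism over $\Sigma\setminus X$, since $X$ has codimension $n+1\ge 2$ in $\Sigma$, and since $\Sigma$ is normal, the dualizing sheaf satisfies $\omega_{\Sigma}=j_{*}\omega_{\Sigma\setminus X}$ for the open inclusion $j\colon\Sigma\setminus X\hookrightarrow\Sigma$; comparing with $\omega_{\Sigma\setminus X}=t_{*}\omega_{t^{-1}(\Sigma\setminus X)}$ yields a canonical injection $t_{*}\omega_{Y}\hookrightarrow\omega_{\Sigma}$ (injective because it is an isomorphism over the dense open $\Sigma\setminus X$ and $t_{*}\omega_{Y}$ is torsion-free). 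The cokernel $Q$ is a coherent sheaf supported on $X$, and the theorem is equivalent to $Q=0\iff H^{n}(X,\mathcal{O}_{X})=0$. By Serre duality the latter condition is $H^{0}(X,\omega_{X})=0$, so morally $Q$ should measure the top holomorphic forms on $X$.

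To compute $Q$ I would pass to the derived category and apply Grothendieck duality to $t$. Because $Y$ is smooth of dimension $2n+1$ and $t$ is proper, $t^{!}\omega_{\Sigma}^{\bullet}=\omega_{Y}[2n+1]$, so duality gives $Rt_{*}\omega_{Y}[2n+1]\cong R\mathcal{H}om_{\Sigma}(Rt_{*}\mathcal{O}_{Y},\omega_{\Sigma}^{\bullet})$; by Grauert--Riemenschneider vanishing $R^{i}t_{*}\omega_{Y}=0$ for $i>0$, so the left-hand side is $t_{*}\omega_{Y}[2n+1]$. Applying $R\mathcal{H}om_{\Sigma}(-,\omega_{\Sigma}^{\bullet})$ to the truncation triangle $\mathcal{O}_{\Sigma}=\mathcal{H}^{0}(Rt_{*}\mathcal{O}_{Y})\to Rt_{*}\mathcal{O}_{Y}\to C$, with $C=\tau_{\ge 1}Rt_{*}\mathcal{O}_{Y}$, and reading off the long exact sequence of cohomology sheaves in degrees $-(2n+1)$ and $-2n$, one recovers the trace map $t_{*}\omega_{Y}\to\omega_{\Sigma}$ and realizes $Q$ as a subsheaf of $\mathcal{H}^{-2n}R\mathcal{H}om_{\Sigma}(C,\omega_{\Sigma}^{\bullet})$. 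A degree count, using $i^{!}\omega_{\Sigma}^{\bullet}=\omega_{X}[n]$ for the closed immersion $i\colon X\hookrightarrow\Sigma$, shows that the sheaf $R^{i}t_{*}\mathcal{O}_{Y}$ contributes to cohomological degree $-(n+i)$, so that in degree $-2n$ only the top term $R^{n}t_{*}\mathcal{O}_{Y}$ survives, and the term $\mathcal{H}^{-(2n+1)}R\mathcal{H}om_{\Sigma}(C,\omega_{\Sigma}^{\bullet})$ vanishes (reconfirming the injectivity above).

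It then remains to compute the top higher direct image $R^{n}t_{*}\mathcal{O}_{Y}$, and here I would use the fiber geometry of $t$ over $X$. For $x\in X$ the fiber $t^{-1}(x)$ is the locus of length-two subschemes of $X$ whose linear span contains $x$; using that the positivity of $L$ forbids trisecant lines through points of $X$, this is exactly the set of subschemes containing $x$, which is isomorphic to the blow-up $\mathrm{Bl}_{x}X$. Since blowing up a smooth point leaves $H^{\bullet}(\mathcal{O})$ unchanged, $H^{n}(t^{-1}(x),\mathcal{O})\cong H^{n}(X,\mathcal{O}_{X})$ for every $x$, and the theorem on formal functions yields $R^{n}t_{*}\mathcal{O}_{Y}\cong\mathcal{O}_{X}\otimes_{k}H^{n}(X,\mathcal{O}_{X})$. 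Dualizing over $X$ identifies $\mathcal{H}^{-2n}R\mathcal{H}om_{\Sigma}(C,\omega_{\Sigma}^{\bullet})\cong\omega_{X}\otimes_{k}H^{n}(X,\mathcal{O}_{X})^{\vee}$, so that $Q$ embeds into $\omega_{X}\otimes_{k}H^{n}(X,\mathcal{O}_{X})^{\vee}$. This already gives the ``if'' direction: when $H^{n}(X,\mathcal{O}_{X})=0$ the target vanishes, hence $Q=0$ and $t_{*}\omega_{Y}\cong\omega_{\Sigma}$.

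The main obstacle is the converse together with the precise bookkeeping in the duality argument. For the ``only if'' direction one must show that the connecting map $\omega_{\Sigma}\to\mathcal{H}^{-2n}R\mathcal{H}om_{\Sigma}(C,\omega_{\Sigma}^{\bullet})$ is surjective onto $Q$, so that $Q\cong\omega_{X}\otimes_{k}H^{n}(X,\mathcal{O}_{X})^{\vee}$ exactly; concretely this amounts to producing, from a nonzero class in $H^{0}(X,\omega_{X})=H^{n}(X,\mathcal{O}_{X})^{\vee}$, a section of $\omega_{\Sigma}$ near $X$ with a genuine pole along the exceptional divisor $\mathcal{E}$ that cannot be cleared, i.e.\ a section not lying in $t_{*}\omega_{Y}$. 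The two delicate points are that $\Sigma$ need not be Cohen--Macaulay, so $\omega_{\Sigma}^{\bullet}$ is a genuine complex and one must verify that the intermediate images $R^{i}t_{*}\mathcal{O}_{Y}$ for $1\le i\le n-1$ do not interfere in degree $-2n$, and that the formal-functions identification of $R^{n}t_{*}\mathcal{O}_{Y}$ is canonical enough to match the dualizing data. I expect both to be controllable through the explicit $\mathbb{P}^{1}$-bundle description of $Y$ over $X^{[2]}$ from \S2 together with the residue sequence $0\to\omega_{Y}\to\omega_{Y}(\mathcal{E})\to\omega_{\mathcal{E}}\to 0$ along $\mathcal{E}$, which reduces the remaining cohomology to Serre duality on the fibers $\cong\mathrm{Bl}_{x}X$ and thereby back to $H^{n}(X,\mathcal{O}_{X})$.
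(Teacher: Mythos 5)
Your overall strategy --- apply Grothendieck duality to $Rt_*\mathcal{O}_{\mathbb{P}(E_L)}$ and read the cokernel $Q$ of $t_*\omega_{\mathbb{P}(E_L)}\hookrightarrow\omega_\Sigma$ off the truncation $C=\tau_{\ge1}Rt_*\mathcal{O}_{\mathbb{P}(E_L)}$ --- is sound and genuinely different from the paper's, and your degree count is correct: every $R^it_*\mathcal{O}_{\mathbb{P}(E_L)}$ with $i\ge1$ is supported on the $n$-dimensional $X$, so $\mathcal{E}xt^{p}(-,\omega^\bullet_\Sigma)$ of it vanishes for $p<-n$ and only $\mathcal{E}xt^{-n}(R^nt_*\mathcal{O}_{\mathbb{P}(E_L)},\omega^\bullet_\Sigma)$ can contribute in degree $-2n$, whatever the intermediate images are. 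But there are two genuine gaps. First, the identification $R^nt_*\mathcal{O}_{\mathbb{P}(E_L)}\cong\mathcal{O}_X\otimes H^n(X,\mathcal{O}_X)$ does not follow from the theorem on formal functions applied to the reduced fiber alone: one must compute $H^n$ of every infinitesimal neighbourhood $\mathcal{O}/\mathcal{I}_{F_x}^{k}$, and since $\mathcal{N}^*_{F_x/\mathbb{P}(E_L)}\cong\mathcal{O}_{F_x}^{\oplus n}\oplus b_x^*L(-2E_x)$ has trivial summands, each graded piece $S^k\mathcal{N}^*_{F_x/\mathbb{P}(E_L)}$ contributes $\binom{n+k-1}{n-1}$ fresh copies of $H^n(F_x,\mathcal{O}_{F_x})=H^n(X,\mathcal{O}_X)$; the inverse system does not stabilize, so the one-line conclusion is unjustified as stated. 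The statement is nevertheless true, and the clean route is Proposition 3.2 of the paper ($R^it_*\mathcal{O}(-\Phi)=0$ for $i>0$, which is exactly the careful version of this thickening computation) combined with $0\to\mathcal{O}(-\Phi)\to\mathcal{O}\to\mathcal{O}_\Phi\to0$ and $R^iq_*\mathcal{O}_\Phi\cong H^i(X,\mathcal{O}_X)\otimes\mathcal{O}_X$, giving $R^it_*\mathcal{O}_{\mathbb{P}(E_L)}\cong i_*\bigl(H^i(X,\mathcal{O}_X)\otimes\mathcal{O}_X\bigr)$ for all $i\ge1$. Second, the ``only if'' direction that you flag as the main obstacle is not one: in your long exact sequence the term following $\mathcal{H}^{-2n}R\mathcal{H}om(C,\omega^\bullet_\Sigma)$ is $\mathcal{H}^{-2n}R\mathcal{H}om(Rt_*\mathcal{O},\omega^\bullet_\Sigma)\cong R^1t_*\omega_{\mathbb{P}(E_L)}$, which vanishes by Grauert--Riemenschneider (which you already invoked), so the connecting map is automatically surjective and $Q\cong\omega_X\otimes H^n(X,\mathcal{O}_X)^\vee$ on the nose; no construction of sections of $\omega_\Sigma$ with uncancellable poles along the exceptional divisor is needed.

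For comparison, the paper reaches essentially the same short exact sequence by a complementary route: it dualizes $Rt_*\mathcal{O}(-\Phi)\cong\mathcal{I}_{X/\Sigma}$ and uses local duality together with $H^i_x(\mathcal{O}_{X_x})=0$ for $i>n$ to prove the key claim $t_*\omega_{\mathbb{P}(E_L)}(\Phi)\cong\omega_\Sigma$, and then pushes forward the residue sequence $0\to\omega_{\mathbb{P}(E_L)}\to\omega_{\mathbb{P}(E_L)}(\Phi)\to\omega_\Phi\to0$, evaluating the quotient $q_*\omega_\Phi$ fibrewise by Grauert's theorem as $H^0(F_x,\omega_{F_x})=H^0(X,\omega_X)$. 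Your version trades the local-duality step for the truncation triangle plus a support-dimension count, which is arguably more transparent, but it cannot bypass Proposition 3.2 (or an equivalent control of all the infinitesimal thickenings of $F_x$): that is precisely the input that makes $R^nt_*\mathcal{O}_{\mathbb{P}(E_L)}$ computable.
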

In Theorem \ref{push dualizing sheaf}, we do not need
$\Sigma(X,L)$ to be Cohen-Macaulay.
Here $\omega_{\Sigma(X,L)}$ is the $-(2n+1)$-th cohomology of the dualizing complex $\omega ^{\bullet}_{\Sigma(X,L)}$.
Combing the two theorems above, we conclude with a corollary, which in case $n=1$ was observed by Vermeire \cite {Vermeire08}.
\begin{corollary}
Under Assumption \ref{assumption}. $\Sigma(X,L)$ has rational singularities if and only if
$H^i(X, \mathcal{O}_X)=0$, for all $1\le i \le n$.\qed
\end{corollary}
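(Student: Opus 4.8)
The plan is to deduce this corollary directly from Kempf's criterion together with the two preceding theorems, so that the only real work is to verify that the hypotheses of Kempf's criterion apply and then to translate its two conditions into the stated cohomological vanishing. First I would recall that under Assumption \ref{assumption} the secant variety $\Sigma(X,L)$ is normal (this is precisely Ullery's theorem, which motivates the whole setup), and that $t:\mathbb{P}(E_L)\rightarrow\Sigma(X,L)$ is a resolution of singularities. Hence Kempf's criterion applies and asserts that $\Sigma(X,L)$ has rational singularities if and only if it is Cohen--Macaulay and $t_*\omega_{\mathbb{P}(E_L)}\iso\omega_{\Sigma(X,L)}$.

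Next I would handle the two conditions separately. For Cohen--Macaulayness, I invoke Theorem \ref{CM1} and the remark following it: since $\dim\Sigma(X,L)=2n+1$ and $\Sigma(X,L)\setminus X$ is smooth, depth need only be tested at points of $X$, and the depth formula there shows that $\Sigma(X,L)$ is Cohen--Macaulay if and only if $H^i(X,\sshf{X})=0$ for all $1\le i\le n-1$. For the dualizing-sheaf condition, I invoke Theorem \ref{push dualizing sheaf}, which gives $t_*\omega_{\mathbb{P}(E_L)}\iso\omega_{\Sigma(X,L)}$ if and only if $H^n(X,\sshf{X})=0$.

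Finally I would combine the two equivalences. By Kempf's criterion, $\Sigma(X,L)$ has rational singularities precisely when both conditions hold simultaneously, that is, when $H^i(X,\sshf{X})=0$ for all $1\le i\le n-1$ and in addition $H^n(X,\sshf{X})=0$. Since these two ranges partition $\{1,\dots,n\}$ with no overlap, the conjunction is exactly the statement that $H^i(X,\sshf{X})=0$ for all $1\le i\le n$, which is the desired equivalence.

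I do not expect a serious obstacle here, as the substantive content is already carried by Theorems \ref{CM1} and \ref{push dualizing sheaf}; the corollary is essentially a bookkeeping combination of the two. The only points demanding care are confirming that Kempf's criterion is genuinely applicable (normality of $\Sigma(X,L)$ and the fact that $t$ is a resolution) and noting that both directions of the equivalence are covered, since Kempf's criterion is itself an ``if and only if'' so that rational singularities force both the Cohen--Macaulay property and the pushforward equality, each of which is necessary.
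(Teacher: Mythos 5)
Your proposal is correct and follows exactly the route the paper intends: the corollary is obtained by combining Theorem \ref{CM1} (Cohen--Macaulayness $\Leftrightarrow$ $H^i(X,\sshf{X})=0$ for $1\le i\le n-1$) with Theorem \ref{push dualizing sheaf} ($t_*\omega_{\mathbb{P}(E_L)}\iso\omega_{\Sigma}$ $\Leftrightarrow$ $H^n(X,\sshf{X})=0$) via Kempf's criterion, using Ullery's normality result to justify its applicability. No gaps.
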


Another natural question to ask is that if the embedding line bundle $L$ does not satisfy Assumption \ref{assumption}, is the secant variety $\Sigma(X,L)$
still Du Bois?
This is not the case. In Section 4, we give examples of the secant varieties are normal but not Du Bois.

In the minimal model program, typical types of singularities considered  are Kawamata log terminal (klt) and log canonical (lc). And it is well known that klt and lc implies rational and Du Bois singularities, respectively. (cf.~\cite{kollarMori08} and \cite{KK}).
So we ask the following question:

\begin{question}\label{Question}
Can one find some boundary divisor $\Delta$ on $\Sigma(X,L)$ such that $\paren{\Sigma(X,L), \Delta}$ is a lc pair, or even more, klt in the case that $\Sigma(X, L)$ has rational singularities?
\end{question}

As an example (cf.~Theorem \ref{log canonical}) we show that when $X\iso \mathbb{P}^2$, $\Sigma(X,L)$ is log canonical with some boundary $\Delta$.

Singularities of classical varieties are of great interest to
algebraic geometers.
For example, it has been shown that generic determinantal varieties, Schubert varieties and Richardson varieties are klt (\cite{IV}, \cite{JS}, \cite{AS} and \cite{Kumar}). In particular,
they all have rational singularities.
The result of this paper provides examples of classical varieties having
Du Bois but not rational singularities.

Besides the singularities mentioned above, people are also interested in
the notions of singularities from the view point of arc spaces. Recently Mather-Jacobian (MJ) discrepancy and related singularities are introduced and have been explored by several authors. W. Niu pointed out to us that the answer to Question \ref{Question} is negative in the context of MJ discrepancy: if $L$ is sufficiently positive, then for any point $x\in X$, $\dim T_x\Sigma(X, L)=\dim T_x\mathbb{P}^N\gg 2n$, so by \cite[Proposition 3.3]{EinIshii13}, $\Sigma(X, L)$ cannot be MJ-log canonical.

{\it Acknowledgments}:
The authors would like to thank Lawrence Ein, S\'{a}ndor Kov\'{a}cs, Wenbo Niu, Lorenzo Prelli for useful discussions, thank Brooke Ullery for her interest in this work. We would also like to thank the referees' corrections and constructive comments.

\section{Notation and preliminaries}

\subsection{Geometry of secant varieties}
We will use the constructions and notations in \cite{Ullery14} and recall them here
for the reader's convenience. We refer the interested reader to \cite{Bertram} and \cite{Vermeire} for
constructions in curves and higher dimension varieties respectively.

Let $X$ be a smooth variety of dimension $n$ embedded in $\mathbb{P}^{N}$ by a very ample line bundle $L$. From now on, we will simply write $\Sigma(X)$ or $\Sigma$ for $\Sigma(X,L)$ when the context is clear. $L$ is said to be \textit{$k$-very ample} if the natural map $H^0(L)\rightarrow H^0(L\otimes\sshf{\xi})$ is surjective for every length $k+1$, zero dimensional closed subscheme $\xi$ of $X$.
Consider a line bundle of the form $L=\omega_X\otimes A^{\otimes k}\otimes B$, where $A$ is very ample and $B$ is a nef line bundles respectively, and $k\ge 1$. As shown in \cite{Ullery14}, $L$ is 3-very ample for $k\ge n+4$, and $\Sigma$ is normal for $k\ge 2(n+1)$ in case $n\ge 2$. We remark that $L$ can fail to be 3-very ample if $k\le n+3$. An example is the Veronese embedding $j: \prj{2}\hookrightarrow\prj{5}$ under $|\sshf{\prj{2}}(2)|$. In this case $\dim\Sigma(\prj{2}, \sshf{\prj{2}}(2))< 5$ (cf.~\cite[p. 43]{Beauville96}), and hence $\sshf{\prj{2}}(2)$ cannot be 3-very ample.

Denote by $X^{[2]}$ the Hilbert scheme of two points on $X$. The universal family $\Phi\subset X^{[2]}\times X$ is isomorphic to the blowing up $\pi: Bl_{\Delta}{X^2}\rightarrow X^2$ over $X^2$, where $\Delta$ is the diagonal on $X^2$. Denote the two projections from $\Phi$ by $\sigma$ and $q$, as shown below

$$\xymatrix{
 &\Phi \ar[dl]_{\sigma}\ar[dr]^{q} &\\
X^{[2]} & & X.}$$
Here $q$ is the composition of $\pi$ and the first projection $p_1: X^2\rightarrow X$.

One can define the tautological vector bundle $E_L$ on $X^{[2]}$ by $E_L=\sigma_*(q^*L)$. The rank two bundle $E_L$ is tautological in the sense that for any length two closed subscheme $\xi$ of $X$,
$E_L\otimes k([\xi])=H^0(X, L\otimes\sshf{\xi})$, and $H^0(X, L)\iso H^0(X^{[2]}, E_L)$. When $L$ is very ample, $E_L$ is globally generated.

Now assume $L$ is 3-very ample. Then $\Sigma(X,L)$ is singular only along $X$. In this case, due to Bertram \cite{Bertram} in curve case and Vermeire \cite{Vermeire} in general, the projective bundle $\mathbb{P}(E_L)\subset X^{[2]}\times\mathbb{P}(H^0(X, L))$ together with second projection provides a natural resolution of singularities $t: \mathbb{P}(E_L) \rightarrow \Sigma(X,L)$.
The exceptional divisor of $t$ is isomorphic to $\Phi$.
In particular, given a closed point $x\in X$ the fiber $F_x$ of $t$ is isomorphic to $Bl_x{X}$, the blowing-up of $X$ at $x$.  As a result, one has the Cartesian diagram

\begin{equation}\label{Geometry of secant varieties}
    \xymatrix{
F_x  \ar[d]^{b_x} \ar@{^{(}->}[r] &\Phi \ar[d]^q \ar@{^{(}->}[r] &\mathbb{P}(E_L)\ar[d]^t\\
\{x\} \ar@{^{(}->}[r] &X \ar@{^{(}->}[r] &\Sigma(X, L).}
\end{equation}
The next easy lemmas will be used several times in the sequel.
\begin{lemma}\label{flat}
The map $q:\Phi \rightarrow X$ is smooth.
\end{lemma}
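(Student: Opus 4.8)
The plan is to deduce smoothness of $q$ from a pointwise statement about its differential. Since $\Phi\cong Bl_{\Delta}X^2$ is the blow-up of the smooth variety $X^2$ along the smooth center $\Delta$, it is itself smooth, and the target $X$ is smooth as well. For a morphism of nonsingular varieties over an algebraically closed field of characteristic zero, smoothness is equivalent to surjectivity of the differential $dq_{\phi}\colon T_{\phi}\Phi\to T_{q(\phi)}X$ at every closed point $\phi$ (the tangent-space criterion for smoothness). So it suffices to verify this surjectivity everywhere, and I would split into two cases according to whether $\phi$ lies on the exceptional divisor $E\subset\Phi$ of $\pi$. Throughout recall that $q=p_1\circ\pi$, where $p_1\colon X^2=X\times X\to X$ is the first projection.

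First, for $\phi\notin E$ the blow-down $\pi$ is a local isomorphism onto $X^2\setminus\Delta$, so up to this isomorphism $q$ coincides near $\phi$ with $p_1$. As a projection $p_1$ is smooth, its differential is surjective, and hence so is $dq_{\phi}$.

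The only real point is the behaviour along $E$, where $d\pi$ drops rank; this is where I expect the work to be. The key observation is that $q$ is already a submersion after restriction to $E$. Indeed $E=\mathbb{P}(N_{\Delta/X^2})$ maps to the center $\Delta$ by the projective-bundle projection $\pi|_{E}$, and $q|_{E}$ is the composite of $\pi|_{E}$ with the isomorphism $p_1|_{\Delta}\colon\Delta\xrightarrow{\ \sim\ }X$, $(x,x)\mapsto x$. Since $N_{\Delta/X^2}$ has rank $n=\operatorname{codim}(\Delta,X^2)$, the map $q|_{E}\colon E\to X$ is a $\mathbb{P}^{n-1}$-bundle, in particular smooth, so $d(q|_{E})_{\phi}\colon T_{\phi}E\to T_{q(\phi)}X$ is surjective for every $\phi\in E$. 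Because $E\hookrightarrow\Phi$ is a closed immersion, $T_{\phi}E\subseteq T_{\phi}\Phi$ and $dq_{\phi}$ restricts to $d(q|_{E})_{\phi}$ on this subspace; hence $dq_{\phi}$ is surjective. Combining the two cases gives surjectivity of $dq$ everywhere, so $q$ is smooth (necessarily of relative dimension $n$).

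As an alternative and cross-check I would instead compute the fibers directly: because $\{x\}\times X$ meets $\Delta$ transversally in the single reduced point $(x,x)$, the fiber $q^{-1}(x)=Bl_{\Delta}X^2\times_{X^2}(\{x\}\times X)$ is the blow-up $Bl_xX$, smooth of dimension $n$; with all fibers of the expected constant dimension over a smooth base and a smooth total space, miracle flatness yields flatness and then smoothness. Either way, the one genuine obstacle is the analysis over $E$ — equivalently, verifying the transversality of $\{x\}\times X$ and $\Delta$ that makes each fiber a one-point blow-up of $X$.
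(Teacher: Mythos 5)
Your main argument is correct, but it takes a genuinely different route from the paper. The paper first invokes ``miracle flatness'': since $\Phi$ and $X$ are smooth and every fiber of $q$ has dimension $n$ (each fiber being $Bl_xX$), $q$ is flat by \cite[III, Ex.10.9]{Hartshorne77}; it then upgrades flatness to smoothness by checking that $\dim_{\kappa(y)}(\Omega_{\Phi/X}\otimes\kappa(y))=n$ at every closed point, so that $\Omega_{\Phi/X}$ is locally free of rank $n$. You instead use the tangent-space criterion for a morphism of nonsingular varieties, reducing everything to surjectivity of $dq_{\phi}$, handled off the exceptional divisor by the local identification of $q$ with $p_1$, and on $E=\mathbb{P}(N_{\Delta/X^2})$ by the observation that $q|_{E}$ is already a $\mathbb{P}^{n-1}$-bundle over $X$, so that surjectivity of $d(q|_E)_\phi$ on the subspace $T_\phi E\subseteq T_\phi\Phi$ forces surjectivity of $dq_\phi$. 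That last observation is the nice point of your proof: it sidesteps any flatness argument and any computation of $\Omega_{\Phi/X}$, at the cost of invoking the description of the exceptional divisor of a blow-up along a smooth center. Your ``alternative cross-check'' (fibers are $Bl_xX$, constant dimension, miracle flatness, then smoothness) is essentially the paper's own proof, so you have in effect recorded both arguments. Both are complete; the paper's version is slightly shorter because the fiber description $F_x\cong Bl_xX$ is already set up earlier in Section 2, while yours is more self-contained at the level of differentials.
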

\begin{proof}
Since both $X$ and $\Phi$ are smooth and every fiber of $q$ has dimension $n$, $q$ is flat \cite[III, Ex.10.9]{Hartshorne77}.
For any closed point $y\in\Phi$, let $x=q(y)$. Then $\dim_{\kappa(y)}(\Omega_{\Phi/X}\otimes\kappa(y))=\dim_{\kappa(y)}(\Omega_{F_x/{\kappa(x)}}\otimes \kappa(y))=n$. Thus $\Omega_{\Phi/X}$ is locally free of rank $n$. So $q$ is smooth of relative dimension $n$.
\end{proof}

\begin{lemma}\label{higher direct images of structure sheaf}
For all $i\ge 0$,
\begin{equation*}
    R^iq_*\sshf{\Phi}\iso H^i(X, \sshf{X})\otimes\sshf{X}.
\end{equation*}

\end{lemma}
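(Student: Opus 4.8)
The plan is to compute $R^iq_*\sshf{\Phi}$ by exploiting the factorization $q=p_1\circ\pi$, where $\pi\colon\Phi=Bl_{\Delta}X^2\to X^2$ is the blow-up of the diagonal and $p_1\colon X^2\to X$ is the first projection. The advantage of this route over a direct fiberwise computation via cohomology and base change is that it produces the explicit trivialization $H^i(X,\sshf{X})\otimes\sshf{X}$, rather than merely an abstract locally free sheaf of the correct rank (which is all that Grauert's theorem would hand us after checking $h^i(F_x,\sshf{F_x})=h^i(X,\sshf{X})$ is constant).

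First I would record that, since $X^2$ is smooth and the diagonal $\Delta$ is a smooth subvariety, the blow-up $\pi$ satisfies $\pi_*\sshf{\Phi}\iso\sshf{X^2}$ and $R^j\pi_*\sshf{\Phi}=0$ for all $j>0$; equivalently $R\pi_*\sshf{\Phi}\iso\sshf{X^2}$ in the derived category. This is the standard vanishing of higher direct images of the structure sheaf under a blow-up along a smooth center, which one verifies from the description of the exceptional divisor as the projectivized normal bundle $\mathbb{P}(N_{\Delta/X^2})$ over $\Delta$ together with the fact that the fibers of $\pi$ are projective spaces with vanishing higher structure-sheaf cohomology.

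Next I would feed this into the Grothendieck spectral sequence for the composition $q=p_1\circ\pi$. Since $R\pi_*\sshf{\Phi}$ is concentrated in degree $0$ and equals $\sshf{X^2}$, the spectral sequence degenerates and yields $Rq_*\sshf{\Phi}\iso R(p_1)_*\sshf{X^2}$, hence $R^iq_*\sshf{\Phi}\iso R^i(p_1)_*\sshf{X^2}$ for every $i\ge 0$. Finally I would apply flat base change (K\"{u}nneth) to the cartesian square realizing $X^2=X\times X$ as the fiber product of the two structure morphisms $X\to\mathrm{Spec}\,k$. Writing $p_2$ for the second projection and noting $\sshf{X^2}=p_2^*\sshf{X}$, flat base change along the first factor gives a canonical isomorphism $R^i(p_1)_*\sshf{X^2}\iso H^i(X,\sshf{X})\otimes_k\sshf{X}$, where the $k$-vector space $H^i(X,\sshf{X})$ is regarded as a trivial sheaf on $X$. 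Chaining the three steps yields the stated formula.

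The computation is essentially formal once the blow-up vanishing is in hand, so the only step needing genuine care is $R^j\pi_*\sshf{\Phi}=0$ for $j>0$; everything downstream (degeneration of the spectral sequence and flat base change for a product projection) is standard. I would therefore concentrate the write-up on that vanishing and present the remaining identifications tersely.
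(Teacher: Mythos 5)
Your argument is correct and is exactly the paper's proof, merely written out in more detail: the paper likewise uses the factorization $q=p_1\circ\pi$, the vanishing $R^j\pi_*\sshf{\Phi}=0$ for $j>0$, and the identification $R^i{p_1}_*\sshf{X^2}\iso H^i(X,\sshf{X})\otimes\sshf{X}$. No changes needed.
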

\begin{proof}Since $q=p_1\circ\pi$ and $R^j\pi_*\sshf{\Phi}=0$ for all $j>0$, one has $R^iq_*\sshf{\Phi}\iso {R^ip_1}_*\sshf{X^2}\iso H^i(X, \sshf{X})\otimes\sshf{X}$.
\end{proof}

\subsection{Du Bois singularities}

Recall that given a smooth complex variety $X$, the de Rham complex is  a resolution of constant sheaf $\mathbb{C}_X$,
\begin{equation*}
\mathcal{O}_X \rightarrow \Omega_X^1\rightarrow \Omega_X^2 \rightarrow \cdots\rightarrow\Omega^n_X.
\end{equation*}
For a singular variety, the Deligne-Du Bois complex $\underline{\Omega}^{\bullet}_X$ is a generalization of the de Rham complex.
The thorough description and properties of $\underline{\Omega}^{\bullet}_X$ are quite involved, we refer the interested reader to \cite[chapter 6]{kollar:book}.
There is a natural map
\begin{equation*}
\mathcal{O}_X\longrightarrow \underline{\Omega}^{0}_X:=Gr^0_{\text{filt}}\underline{\Omega}^{\bullet}_X,
\end{equation*}
and we say that $X$ has Du Bois singularities (DB for short) if the natural map is a quasi-isomorphism.
For example, a simple normal crossing reduced scheme is DB.

One can also define Du Bois singularities of a pair $(X,  Y)$  by considering the Deligne-Du Bois complex $\underline{\Omega}^{\bullet}_{X,Y}$.
Analogously, there is a natural map
\begin{equation*}
\is{Y/X} \longrightarrow \underline{\Omega}^{0}_{X,Y}:=Gr^0_{\text{filt}}\underline{\Omega}^{\bullet}_{X,Y},
\end{equation*}
where $\is{Y/X}$ is the ideal sheaf of $Y$ in $X$.
We say $(X,Y)$ is a Du Bois pair (DB pair for short) if the natural map is a quasi-isomorphism.
Note that being a DB pair does not imply either $X$ or $Y$ is DB, the notion is more about the ``relation" between $X$ and $Y$.
However if both $X$ and $Y$ are DB, then $(X,Y)$ is a DB pair.

\subsection{Local duality}

We recall local duality, which will be our main
tool for studying the dualizing sheaf of secant variety. Let $(R,p)$ be a local ring.
An injective hull $I$ of the residue field $k=R/p$ is an injective $R$-module
such that
for any non-zero submodule $N\subset I$ we have $N \cap k\neq 0$.
Or equivalently, the injective hull $I$ is the minimal injective module containing
$k$.

\begin{theorem}(cf.~\cite[$\S$V. Theorem 6.2]{Har66})\label{local duality}
Let $(R,p)$ be a local ring and $\mathcal{F}^{\bullet}\in D^{+}_{coh}(R)$.
Then
\begin{equation*}
\mathcal{R}\Gamma_p(\mathcal{F}^{\bullet})\rightarrow \mathcal{R}Hom
(\mathcal{R}Hom(\mathcal{F}^{\bullet}, \omega _R ^{\bullet}), I)
\end{equation*}
is an isomorphism.
\end{theorem}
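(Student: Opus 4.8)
The plan is to treat this as Grothendieck's local duality theorem and to follow the derived-category method of Hartshorne, viewing both sides as exact functors of $\mathcal{F}^\bullet$ and reducing the general statement to the single case $\mathcal{F}^\bullet=R$ by a d\'evissage argument. Write $(-)^\vee=\mathrm{Hom}_R(-,I)$ for the Matlis duality functor; since $I$ is injective this is exact, so $\mathcal{R}Hom(-,I)=(-)^\vee$ and the right-hand side is the functor $G(\mathcal{F}^\bullet):=\left(\mathcal{R}Hom(\mathcal{F}^\bullet,\omega_R^\bullet)\right)^\vee$, while the left-hand side is $L(\mathcal{F}^\bullet):=\mathcal{R}\Gamma_p(\mathcal{F}^\bullet)$. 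Both $L$ and $G$ send distinguished triangles to distinguished triangles and commute with shifts, so the full subcategory of objects on which the natural map is an isomorphism is triangulated and closed under shifts and direct summands.

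First I would construct the canonical natural transformation $L\to G$. Starting from the evaluation pairing $\mathcal{F}^\bullet\otimes^{L}\mathcal{R}Hom(\mathcal{F}^\bullet,\omega_R^\bullet)\to\omega_R^\bullet$, I apply $\mathcal{R}\Gamma_p$; since $\mathcal{R}\Gamma_p(-)\iso\mathcal{R}\Gamma_p(R)\otimes^{L}(-)$ (stable Koszul/\v{C}ech computation), it commutes with tensoring by the fixed complex $\mathcal{R}Hom(\mathcal{F}^\bullet,\omega_R^\bullet)$, yielding $\mathcal{R}\Gamma_p(\mathcal{F}^\bullet)\otimes^{L}\mathcal{R}Hom(\mathcal{F}^\bullet,\omega_R^\bullet)\to\mathcal{R}\Gamma_p(\omega_R^\bullet)\iso I$, where the last isomorphism is the normalization of the dualizing complex. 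Tensor-hom adjunction then converts this into the sought map $\mathcal{R}\Gamma_p(\mathcal{F}^\bullet)\to\mathcal{R}Hom(\mathcal{R}Hom(\mathcal{F}^\bullet,\omega_R^\bullet),I)=G(\mathcal{F}^\bullet)$.

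Next I would invoke Hartshorne's way-out lemma to reduce to a single test object. Both functors are way-out in both directions: local cohomology $\mathcal{R}\Gamma_p$ is concentrated in cohomological degrees $[0,\dim R]$, the Grothendieck dual $\mathcal{R}Hom(-,\omega_R^\bullet)$ preserves bounded coherent complexes because $\omega_R^\bullet$ has finite injective dimension, and $(-)^\vee$ is exact. Hence to prove that the map is an isomorphism on all of $D^+_{coh}(R)$ it suffices to check it on finitely generated free modules, i.e.\ on $\mathcal{F}^\bullet=R$.

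The base case is where the genuine duality content lies. For $\mathcal{F}^\bullet=R$ one has $\mathcal{R}Hom(R,\omega_R^\bullet)\iso\omega_R^\bullet$, so the assertion becomes $\mathcal{R}\Gamma_p(R)\iso(\omega_R^\bullet)^\vee$, that is, the local cohomology of $R$ is the Matlis dual of the dualizing complex. I would deduce this from the structure theory of dualizing complexes, reducing to the complete case, where $\omega_R^\bullet$ is characterized by $\mathcal{R}Hom(k,\omega_R^\bullet)\iso k$ and Matlis duality is a perfect duality on complexes with Artinian cohomology; applying $(-)^\vee$ to $\mathcal{R}\Gamma_p(R)$, whose cohomology modules are Artinian, then returns $\omega_R^\bullet$. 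The main obstacle I anticipate is not the way-out reduction, which is essentially formal once the boundedness estimates are in place, but rather matching the evaluation/residue map constructed in the second step with the abstract Matlis-duality identification of $\mathcal{R}\Gamma_p(R)$ in the base case, so that one obtains \emph{this} natural transformation as the isomorphism rather than merely an abstract equivalence of the two functors.
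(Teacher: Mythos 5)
The paper does not actually prove this statement: it is quoted (with the citation ``cf.~\S V, Theorem 6.2'' of Hartshorne's \emph{Residues and Duality}) as a known tool, so there is no in-paper argument to compare against. Your sketch is essentially the standard proof from that reference --- construct the evaluation/trace map, reduce via the way-out lemma to $\mathcal{F}^{\bullet}=R$, and identify $\mathcal{R}\Gamma_p(R)$ with the Matlis dual of the normalized dualizing complex --- and it is correct in outline, with the same tacit standing hypotheses the paper makes (Noetherian local $R$ admitting a dualizing complex, normalized so that $\mathcal{R}\Gamma_p(\omega_R^{\bullet})\iso I$); as you note yourself, the only substantive work left is the base case and the compatibility of your constructed map with the abstract identification.
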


In particular, taking $i$-th cohomology on both sides,
we have
\begin{equation}\label{equ}
\mbox{H}^i_p(\mathcal{F}^{\bullet})\iso Hom (\mbox{H}^{-i}(\mathcal{R}Hom(\mathcal{F}^{\bullet}, \omega _R ^{\bullet})),I)
\iso Hom (\mathcal{E}xt^{-i}(\mathcal{F}^{\bullet}, \omega _R ^{\bullet})),I).
\end{equation}

The $-i$ comes from switching the cohomology functor $\mbox{H}^i(\cdot)$ and $Hom(\:, I)$.
In this paper we will only use the case when $\mathcal{F}^{\bullet}$ is a module,
as in \cite[$\S$V. Corollary 6.3]{Har66}.
%Moreover, if we apply $Hom(\:, I)$ to Equation (\ref{equ}) we get
%\begin{equation}\label{equ2}
%Hom(\mbox{H}^i_p(\mathcal{F}^{\bullet}), I) \iso \mathcal{E}xt^{-i}(\mathcal{F}%^{\bullet}, \omega _R ^{\bullet}).
%\end{equation}
%(See \S V. Proposition 5.1 in \cite{Har66} )

%%%We remark that Theorem \ref{local duality} implies  the following well known fact,
%%%\begin{fact}\label{fact}
%%%\begin{equation*}
%%%\mbox{H}^i_p(\mathcal{F}^{\bullet})=0 \mbox { if and only if }
%%%\mathcal{E}xt^{-i}(\mathcal{F}^{\bullet}, \omega _R ^{\bullet})=0.
%%%\end{equation*}
%%%\end{fact}
%%%\begin{proof}
%%%We only show the only if statement since the other way is trivial. Suppose
%%%that $\mathcal{E}xt^{-i}(\mathcal{F}^{\bullet}, \omega _R ^{\bullet}) \neq 0.$
%%%Then the vector space $\mathcal{E}xt^{-i}(\mathcal{F}^{\bullet}, \omega _R ^{\bullet})
%%%\otimes R/p$ has a nontrivial map to $k$ by mapping a basis to $1_k$.
%%%But by definition of injective hull, $k \subset I$ so we have a non zero map
%%%\begin{equation*}
%%%\mathcal{E}xt^{-i}(\mathcal{F}^{\bullet}, \omega _R ^{\bullet})
%%%\rightarrow
%%%\mathcal{E}xt^{-i}(\mathcal{F}^{\bullet}, \omega _R ^{\bullet})\otimes R/p
%%%\rightarrow
%%%I.
%%%\end{equation*}
%%%In particular, $\mbox{H}^i_p(\mathcal{F}^{\bullet})\neq 0$.
%%\end{proof}

\begin{lemma}(cf.~\cite[Lemma 3.1]{Kovacs})\label{dualizing sheaf}
Given a closed point $x \in \Sigma$, let $\mathcal{O}_{\Sigma_x }$ be the local ring of $\mathcal{O}_{\Sigma}$ at $x$ and $\Sigma _x=\text{Spec} \mathcal{O}_{\Sigma_x}$ be the local scheme.
Then we have the following equation
\begin{equation*}
(\omega ^{\bullet}_{\Sigma})\otimes \mathcal{O}_{\Sigma _x} \iso \omega ^{\bullet}_{\Sigma_{x}},
\end{equation*}
where $\omega ^{\bullet}$ denotes dualizing complex.
\end{lemma}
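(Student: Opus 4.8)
The plan is to reduce the statement to the standard fact that the normalized dualizing complex of a scheme of finite type over a field is compatible with localization. Since $\Sigma$ is a projective variety over the base field $k$, it is of finite type, so its normalized dualizing complex exists and can be realized as $\omega^\bullet_\Sigma = \pi^!k$, where $\pi:\Sigma\to\operatorname{Spec}k$ is the structure morphism and $\pi^!$ is the twisted inverse image functor. The localization morphism $j:\Sigma_x=\operatorname{Spec}\mathcal{O}_{\Sigma,x}\to\Sigma$ is flat, so the tensor product $\omega^\bullet_\Sigma\otimes\mathcal{O}_{\Sigma_x}$ in the statement agrees with the (derived) restriction $j^*\omega^\bullet_\Sigma$, localization being exact; thus the content of the lemma is precisely that $j^*\omega^\bullet_\Sigma$ is the normalized dualizing complex of $\Sigma_x$.

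First I would pass to an affine neighborhood: choose an affine open $U=\operatorname{Spec}R\ni x$ with $R$ a finitely generated $k$-algebra, and let $p\subset R$ be the prime corresponding to $x$, so $\mathcal{O}_{\Sigma,x}=R_p$. The dualizing complex restricts to opens, i.e. $\omega^\bullet_\Sigma|_U\cong\omega^\bullet_R$, and it then remains to prove $(\omega^\bullet_R)_p\cong\omega^\bullet_{R_p}$ as dualizing complexes for $R_p$. This is the localization compatibility of dualizing complexes (Hartshorne, \emph{Residues and Duality}, \S V): the three defining properties of $\omega^\bullet_R$ — coherent cohomology, finite injective dimension, and the homothety isomorphism $R\xrightarrow{\sim}\operatorname{RHom}_R(\omega^\bullet_R,\omega^\bullet_R)$ — all localize, using that the localization of an injective $R$-module is an injective $R_p$-module and that $\operatorname{RHom}$ commutes with localization for complexes with coherent cohomology. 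Hence $(\omega^\bullet_R)_p$ is \emph{a} dualizing complex for $R_p$.

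The one genuine point to verify — and the step I expect to be the main obstacle — is the normalization, since a dualizing complex is unique only up to a shift and a twist by an invertible module, and a priori localization might introduce such a discrepancy. Here I would use that $x$ is a closed point of the irreducible variety $\Sigma$, which is equidimensional of dimension $2n+1$, so that $\dim\mathcal{O}_{\Sigma,x}=2n+1$. The geometric normalization of $\omega^\bullet_\Sigma$ places its lowest nonvanishing cohomology sheaf in degree $-(2n+1)$ (equal to $\omega_\Sigma$, consistent with the convention fixed in the paper), while the local normalization of $\omega^\bullet_{R_p}$ is characterized by $\operatorname{RHom}_{R_p}(\kappa(x),\omega^\bullet_{R_p})\cong\kappa(x)$ in degree $0$, which forces its lowest nonvanishing cohomology into degree $-\dim R_p=-(2n+1)$ as well. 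Since the two normalizations agree in this degree, the localization $(\omega^\bullet_R)_p$ carries no extra shift or twist and already \emph{is} $\omega^\bullet_{R_p}$, giving the claimed isomorphism. Equivalently, one can argue by writing $\Sigma_x=\varprojlim U'$ over affine open neighborhoods of $x$: for each open immersion $j':U'\hookrightarrow\Sigma$ one has $j'^!\cong j'^*$ with no twist on the dualizing complex, and passing to the limit yields $\omega^\bullet_{\Sigma_x}\cong j^*\omega^\bullet_\Sigma=\omega^\bullet_\Sigma\otimes\mathcal{O}_{\Sigma_x}$.
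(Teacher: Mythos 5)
Your argument is correct: the paper itself gives no proof of this lemma, deferring entirely to the citation \cite[Lemma 3.1]{Kovacs}, and what you have written is precisely the standard localization argument that such a reference supplies --- restrict to an affine chart, check that the three defining properties of a dualizing complex (coherent cohomology, finite injective dimension, the homothety quasi-isomorphism) are preserved under the flat map $R\to R_p$, and then match normalizations. In particular you correctly isolate and resolve the only genuinely delicate point, namely that since the twist by an invertible module is trivial over a local ring, the two candidates can differ only by a shift, and that shift is zero because $x$ is a \emph{closed} point of the equidimensional variety $\Sigma$, so $\dim\mathcal{O}_{\Sigma,x}=2n+1$ and the lowest nonvanishing cohomology of both complexes sits in degree $-(2n+1)$ (equivalently, $\operatorname{RHom}(\kappa(x),-)\cong\kappa(x)$ in degree $0$ for both normalizations).
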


\section{Du Bois Property}
In this section, we prove under Assumption \ref{assumption}, that $\Sigma(X,L)$ is always DB and give a partial answer to Question \ref{Question}.
It turns out that both results are related to the positivity property of $\shf{N}^*_{F_x/\mathbb{P}(E_{L})}$, the conormal bundle to $F_x $ in $\mathbb{P}(E_{L})$.

\begin{lemma}(\cite[Lemma 2.2]{Ullery14})\label{ullery}
Let $n=\dim X$. Assume $L$ is a $3$-very ample line bundle on $X$. Then for all $x\in X$,
\begin{equation*}
    \shf{N}^*_{F_x/\mathbb{P}(E_L)}\iso \sshf{F_x}^{\oplus n}\oplus b^*_xL(-2E_x),
\end{equation*}
where $F_x\iso Bl_xX$ and $E_x$ is the exceptional divisor.
\end{lemma}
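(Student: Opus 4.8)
The plan is to read off the conormal bundle from the flag $F_x \subset \Phi \subset \mathbb{P}(E_L)$, where $\Phi$ is the exceptional divisor of $t$ and $F_x=q^{-1}(x)$. Writing $\rho\colon \mathbb{P}(E_L)\to X^{[2]}$ for the bundle projection (so that $\rho|_\Phi=\sigma$), all three varieties are smooth, hence every immersion is regular and the conormal sequence is a short exact sequence of locally free sheaves
\[
0 \rightarrow \shf{N}^*_{\Phi/\mathbb{P}(E_L)}|_{F_x} \rightarrow \shf{N}^*_{F_x/\mathbb{P}(E_L)} \rightarrow \shf{N}^*_{F_x/\Phi} \rightarrow 0 ,
\]
and the ranks match the target since $1+n=\dim\mathbb{P}(E_L)-\dim F_x$. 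First I would dispose of the quotient term: because $q$ is smooth of relative dimension $n$ (Lemma \ref{flat}) and $F_x$ is its scheme-theoretic fiber over the reduced point $x$, the conormal bundle is pulled back from $\Omega_X\otimes\kappa(x)$, giving the trivial summand
\[
\shf{N}^*_{F_x/\Phi} \iso q^*(\mathfrak{m}_x/\mathfrak{m}_x^2)\otimes\sshf{F_x} \iso \sshf{F_x}^{\oplus n}.
\]

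The heart of the argument is the line bundle $\shf{N}^*_{\Phi/\mathbb{P}(E_L)}|_{F_x}=\sshf{\mathbb{P}(E_L)}(-\Phi)|_{F_x}$, which must be identified with $b_x^*L(-2E_x)$. Since $\sigma\colon \Phi\to X^{[2]}$ is finite of degree two, the class of $\Phi$ has the shape $\sshf{\mathbb{P}(E_L)}(\Phi)\iso \sshf{\mathbb{P}(E_L)}(2)\otimes \rho^*M$ for a unique $M\in\mathrm{Pic}(X^{[2]})$. Restricting to $F_x$ simplifies matters: as $t$ contracts $F_x$ to $x$ and $t^*\sshf{\prj{N}}(1)=\sshf{\mathbb{P}(E_L)}(1)$, the class $\sshf{\mathbb{P}(E_L)}(1)|_{F_x}$ is trivial (equivalently $q^*L|_{F_x}$ is trivial, $q$ being constant on $F_x$), while $\rho|_{F_x}=\sigma|_{F_x}$ realizes $F_x\iso Bl_xX$ as the locus $\Sigma_x\subset X^{[2]}$ of length-two subschemes containing $x$. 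Thus $\sshf{\mathbb{P}(E_L)}(-\Phi)|_{F_x}\iso(\sigma|_{F_x})^*M^{-1}$, and the task reduces to computing $M$. I would extract $M$ from the relative tangent identity $T_{\mathbb{P}(E_L)/X^{[2]}}\iso \sshf{\mathbb{P}(E_L)}(2)\otimes\rho^*(\det E_L)^{-1}$ together with $E_L=\sigma_*q^*L$. The subtlety, and the origin of the coefficient $-2$, is the ramification of $\sigma$ along the diagonal: over reduced subschemes $\Phi$ is an unramified double cover meeting each fiber transversally, so there $\shf{N}_{\Phi/\mathbb{P}(E_L)}$ agrees with $T_{\mathbb{P}(E_L)/X^{[2]}}|_\Phi$; over non-reduced subschemes $\sigma$ is simply ramified and the two sheets of $\Phi$ become tangent to the $\mathbb{P}^1$-fibers, so the two line bundles differ by the diagonal (ramification) divisor. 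Tracing this correction through $\sigma|_{F_x}$, the diagonal restricts to $E_x\subset Bl_xX$ and the simple ramification produces exactly the twist $-2E_x$.

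Finally I would verify that the conormal sequence splits, so the extension is the claimed direct sum. The obstruction lives in $\mathrm{Ext}^1(\sshf{F_x}^{\oplus n}, b_x^*L(-2E_x))\iso H^1(F_x, b_x^*L(-2E_x))^{\oplus n}$; by the projection formula and the vanishing $R^i b_{x*}\sshf{F_x}(-2E_x)=0$ for $i>0$ together with $b_{x*}\sshf{F_x}(-2E_x)\iso \mathfrak{m}_x^2$, the Leray spectral sequence identifies this with $H^1(X, L\otimes\mathfrak{m}_x^2)^{\oplus n}$, which vanishes by the positivity of $L$. I expect the precise line-bundle computation of the previous paragraph to be the main obstacle: getting the coefficient of $E_x$ right requires the local analysis of the ramification of $\sigma$ along the diagonal rather than a formal Chern class manipulation, whereas the triviality of the rank-$n$ piece and the splitting are comparatively routine.
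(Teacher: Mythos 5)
First, note that the paper itself offers no proof of this lemma --- it is quoted verbatim from \cite[Lemma 2.2]{Ullery14} --- so your attempt has to be measured against what a complete argument requires. Your skeleton is the natural one and agrees with Ullery's: the conormal sequence of the flag $F_x\subset\Phi\subset\mathbb{P}(E_L)$, with the trivial rank-$n$ quotient coming from $F_x$ being a fiber of the smooth morphism $q$ (Lemma \ref{flat}) and the line-bundle sub being $\sshf{\mathbb{P}(E_L)}(-\Phi)|_{F_x}$. However, the central identification $\sshf{\mathbb{P}(E_L)}(-\Phi)|_{F_x}\iso b_x^*L(-2E_x)$ is only sketched (relative degree two over $X^{[2]}$, plus a ramification heuristic that is supposed to ``produce'' the twist $-2E_x$); you acknowledge this yourself, and as written it is not a proof --- this identification is essentially the content of the lemma and is exactly what the paper later cites from \cite[p.~8]{Ullery14}.

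The second problem is a genuine error rather than an omission: your splitting argument fails under the stated hypothesis. The obstruction group is indeed $H^1(F_x,b_x^*L(-2E_x))^{\oplus n}\iso H^1(X,L\otimes\mathfrak{m}_x^2)^{\oplus n}$, but $3$-very ampleness does not force it to vanish: from
\begin{equation*}
H^0(X,L)\rightarrow H^0(X,L\otimes\sshf{X}/\mathfrak{m}_x^2)\rightarrow H^1(X,L\otimes\mathfrak{m}_x^2)\rightarrow H^1(X,L)\rightarrow 0
\end{equation*}
one needs $H^1(X,L)=0$ in addition to surjectivity of the first map, and this fails, for instance, for $L=\omega_X$ on a curve with $\mathrm{Cliff}(X)\ge 3$ --- precisely the example used in Section 4 of the paper --- where $H^1(F_x,b_x^*L(-2E_x))=H^1(X,\omega_X(-2x))\iso H^0(X,\sshf{X}(2x))^{\vee}\neq 0$. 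The lemma is still true there, so the extension class vanishes for a reason your argument does not capture. The fix (and the route of the cited proof) is to use that $F_x=t^{-1}(x)$ scheme-theoretically, so $\is{F_x}=\mathfrak{m}_x\cdot\sshf{\mathbb{P}(E_L)}$ and the trivial bundle $(\mathfrak{m}_x/\mathfrak{m}_x^2)\otimes\sshf{F_x}$ surjects onto $\shf{N}^*_{F_x/\mathbb{P}(E_L)}$; composing with the projection onto the trivial quotient $\sshf{F_x}^{\oplus n}$ gives a surjection of trivial bundles on the proper connected variety $F_x$, hence a surjection of the underlying vector spaces, and lifting a basis yields an explicit splitting with no cohomological vanishing needed. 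Your vanishing argument does become valid under Assumption \ref{assumption} (it is then a special case of the vanishing (\ref{vanishing})), but the lemma is stated, and used, for arbitrary $3$-very ample $L$.
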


The technical result below will be used in many places
later on.

\begin{proposition}\label{vanishing of higher direct images}
$R^it_*\mathcal{O}_{\mathbb{P}(E_L)}(-\Phi)=0$\: for all $i>0$.
\end{proposition}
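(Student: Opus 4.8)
The plan is to compute the higher direct images by the theorem on formal functions along the exceptional locus. Since $t$ restricts to an isomorphism over $\Sigma\setminus X$, the sheaves $R^it_*\sshf{\mathbb{P}(E_L)}(-\Phi)$ are supported on $X$ for $i>0$, so it suffices to show that their completions vanish at every closed point $x\in X$. Writing $\mathcal{G}=\sshf{\mathbb{P}(E_L)}(-\Phi)$, I would first identify the restriction $\mathcal{G}|_{F_x}$. From the conormal sequence of $F_x\subset\Phi\subset\mathbb{P}(E_L)$ one has $\mathcal{G}|_{F_x}\iso\shf{N}^*_{\Phi/\mathbb{P}(E_L)}|_{F_x}$, and since $q\colon\Phi\to X$ is smooth (Lemma \ref{flat}) with fiber $F_x$, the quotient $\shf{N}^*_{F_x/\Phi}$ is the trivial bundle $\sshf{F_x}^{\oplus n}$. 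Comparing with Lemma \ref{ullery} (for instance by taking determinants) forces $\mathcal{G}|_{F_x}\iso b_x^*L(-2E_x)$, the line-bundle summand of $\shf{N}^*_{F_x/\mathbb{P}(E_L)}$.

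Next I would reduce the formal-functions limit to cohomology on the infinitesimal neighborhoods of $F_x$. The scheme-theoretic fibers are cut out by $t^*\mathfrak{m}_x$, but since $F_x$ is a smooth, hence regularly embedded, subvariety of $\mathbb{P}(E_L)$, the ideal $\mathcal{I}_{F_x}$ and $t^*\mathfrak{m}_x\cdot\sshf{\mathbb{P}(E_L)}$ have the same radical; by Artin--Rees the two adic filtrations are cofinal, so the inverse limit appearing in the theorem on formal functions may equally well be computed with the $\mathcal{I}_{F_x}$-adic thickenings. Their associated graded pieces are $\mathrm{Sym}^j\shf{N}^*_{F_x/\mathbb{P}(E_L)}$, and tensoring with $\mathcal{G}|_{F_x}$ and invoking Lemma \ref{ullery} these decompose as finite direct sums of line bundles of the form $b_x^*L^{\,k}(-2kE_x)$ with $k\ge 1$. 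It therefore suffices to prove that each such summand has no higher cohomology on $F_x$.

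It then remains to show $H^i(F_x, b_x^*L^{\,k}(-2kE_x))=0$ for all $i>0$ and all $k\ge 1$. Using $\omega_{F_x}\iso b_x^*\omega_X((n-1)E_x)$, I would rewrite this bundle as $\omega_{F_x}\otimes b_x^*(L^{\,k}\otimes\omega_X^{-1})(-(2k+n-1)E_x)$ and check, using $L=\omega_X\otimes A^{2(n+1)}\otimes B$ from Assumption \ref{assumption} together with the nefness of $b_x^*A(-E_x)$ and of $\omega_X\otimes A^{n+1}$, that the twisting factor is nef and big on $F_x$. Kawamata--Viehweg vanishing then kills each graded piece, hence $H^i$ on every thickening, hence the formal-functions limit, giving $(R^it_*\mathcal{G})^{\wedge}_x=0$ for all $x\in X$ and $i>0$; as these coherent sheaves are supported on $X$, they vanish. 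I expect the main obstacle to be exactly this last stage: organizing the thickenings so that their graded pieces are genuine vector bundles (the cofinality argument above, which replaces the awkward $\mathfrak{m}_x$-adic structure coming from the singular $\Sigma$ by the locally free $\mathcal{I}_{F_x}$-adic one), and then carrying out the positivity bookkeeping needed to present every $b_x^*L^{\,k}(-2kE_x)$ uniformly in $k$ as an adjoint bundle twisted by a nef and big class, so that a single vanishing theorem applies.
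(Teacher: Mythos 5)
Your proposal is correct and follows essentially the same route as the paper's proof: reduce to a closed point $x\in X$ via the theorem on formal functions, induct over the $\mathcal{I}_{F_x}$-adic thickenings whose graded pieces are $b_x^*L(-2E_x)\otimes S^k\shf{N}^*_{F_x/\mathbb{P}(E_L)}$ and split by Lemma \ref{ullery} into line bundles $b_x^*L^{\,k}(-2kE_x)$, and then kill these by presenting them as adjoint bundles twisted by a positive class. The only (harmless) differences are that you obtain $\sshf{\mathbb{P}(E_L)}(-\Phi)|_{F_x}\iso b_x^*L(-2E_x)$ by a determinant argument rather than citing \cite[p.~8]{Ullery14}, you make the cofinality of the two adic filtrations explicit, and you invoke Kawamata--Viehweg with a nef and big twist where the paper exhibits an ample twist and applies Kodaira.
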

\begin{proof}
Since $t:\mathbb{P}(E_L) \rightarrow \Sigma$ is an isomorphism over
$\Sigma \setminus X $, it suffices to prove the statement
at any closed point $x \in X$.

By the theorem on formal functions (cf.~\cite[III. 11]{Hartshorne77}), one has the isomorphism
\begin{equation*}
   R^it_*\widehat{\mathcal{O}}_{\mathbb{P}(E _L)}(-\Phi)_x\xrightarrow{\sim}\lim_{\substack{\leftarrow \\ k}} H^i(F_x, \mathcal{O}_{\mathbb{P}(E _L)}(-\Phi)\otimes\mathcal{O}_{\mathbb{P}(E _L)}/{\is{F_x}^k}),
\end{equation*}
so it suffices to show that $H^i(F_x, \mathcal{O}_{\mathbb{P}(E _L)}(-\Phi)\otimes\mathcal{O}_{\mathbb{P}(E_L)}/{\is{F_x}^k})=0$ for all $k>0$.

To this end we do induction on $k$. For $k=1$,
\begin{equation*}
   \mathcal{O}_{\mathbb{P}(E_L)}(-\Phi)\otimes\mathcal{O}_{\mathbb{P}(E_L)}/{\is{F_x}}\iso\shf{N}^*_{\Phi/{\mathbb{P}(E_L)}}\big|_{F_x}\iso b^*_xL(-2E_x),
\end{equation*}
where the last isomorphism is by \cite[p. 8]{Ullery14}, and $E_x$ is the exceptional divisor of the blowing-up of $X$ at $x$,  $b_x:F_x\rightarrow X$.

We now argue in case $n\ge 2$, but it is evident that the statement below is also valid for $n=1$.

Recalling Assumption \ref{assumption} for $L$, then for $j\ge 1$, we have
\begin{equation*}
    {b_x}^*L^j(-2jE_x))\iso \omega_{F_x}\otimes P^{n+1}\otimes Q,
\end{equation*}
where
\begin{eqnarray*}
% \nonumber to remove numbering (before each equation)
  P&=&b^*_xA^2(-E_x), \quad\text{and}  \\
   Q&=&\paren{\omega_{F_x}\otimes P^{n+1}\otimes b^*_xB}^{j-1}\otimes b^*_xB .
\end{eqnarray*}
It is well known that $P$ is very ample, so $\omega_{F_x}\otimes P^{n+1}\otimes b^*_xB$ is very ample, for  instance by  \cite[p.~57]{EL93}. It follows that $Q$ is nef and hence $P^{n+1}\otimes Q$ is ample.
Therefore by Kodaira vanishing, we have
\begin{equation}\label{vanishing}
        H^i(F_x, {b_x}^*L^j(-2jE_x))=0 \quad \text{for all}\: i>0.
\end{equation}
In particular, the vanishing above for $j=1$ is the desired vanishing for $k=1$.

For any $k>1$, consider the exact sequence
{\footnotesize
\begin{equation*}
    0\rightarrow{\mathcal{O}_{\mathbb{P}(E_L)}(-\Phi)\otimes{\is{F_x}^{k}}/{\is{F_x}^{k+1}}}\rightarrow{\mathcal{O}_{\mathbb{P}(E _L)}(-\Phi)\otimes\mathcal{O}_{\mathbb{P}(E _L)}/{\is{F_x}^{k+1}}}\rightarrow{\mathcal{O}_{\mathbb{P}(E_L)}(-\Phi)\otimes\mathcal{O}_{\mathbb{P}(E_L)}/{\is{F_x}^k}}\rightarrow 0.
\end{equation*}
}
We observe that
\begin{equation}\label{symmetric product}
    \mathcal{O}_{\mathbb{P}(E_L)}(-\Phi)\otimes{\is{F_x}^{k}}/{\is{F_x}^{k+1}}\iso b^*_xL(-2E_x)\otimes S^k\shf{N}^*_{F_x/\mathbb{P}(E_{L})}.
\end{equation}
By Lemma \ref{ullery},
\begin{equation*}
   S^k\shf{N}^*_{F_x/\mathbb{P}(E_L)}\iso \bigoplus^k_{j=0}\bigoplus^{{n+k-j-1\choose n-1}}b^*_xL^{j}(-2jE_x),
\end{equation*}
so (\ref{symmetric product}) is a direct sum with summands
\begin{equation*}
b_x^*L^{j+1}(-2(j+1)E_x), \quad \quad 0\le j \le k.
\end{equation*}
Then it is immediate by (\ref{vanishing}) that
\begin{equation*}
    H^i(F_x, \mathcal{O}_{\mathbb{P}(E_L)}(-\Phi)\otimes{\is{F_x}^{k}}/{\is{F_x}^{k+1}})=0,
\end{equation*}
which completes the proof together with the induction hypothesis.
\end{proof}

We need the following criterion for DB singularities.

\begin{theorem} \label{KK}(\cite[Theorem 1.6]{KK})
Let $f:Y\rightarrow X$  be a proper morphism between reduced schemes of finite
type over $\mathbb{C}$.
Let $W\subseteq X$ and $F:=f^{-1}(W)\subseteq Y$ be closed reduced subschemes with
ideal sheaves $\is{W/X}$ and $\is{F/Y}$.
Assume that the natural map
\begin{equation*}
\rho :\is{W/X}\longrightarrow \mathcal{R}f_*\is{F/Y}
\end{equation*}
admits a left inverse $\rho '$, that is, $\rho'\circ\rho =id_{\is{W/X}}$. Then if $Y,F$ and $W$ all have DB singularities, then so does $X$.
\end{theorem}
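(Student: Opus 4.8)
The plan is to deduce that $X$ is DB from a splitting criterion, feeding in the hypothesis $\rho'\circ\rho=\mathrm{id}$ only once, and otherwise arguing by comparing distinguished triangles of Deligne--Du Bois complexes. First I would observe that the pair $(Y,F)$ is itself DB. Comparing, through the natural vertical maps, the distinguished triangles
\[
\is{F/Y}\to\sshf{Y}\to\sshf{F}\xrightarrow{+1}
\qquad\text{and}\qquad
\underline{\Omega}^0_{Y,F}\to\underline{\Omega}^0_Y\to\underline{\Omega}^0_F\xrightarrow{+1},
\]
the assumption that $Y$ and $F$ are DB makes the verticals $\sshf{Y}\to\underline{\Omega}^0_Y$ and $\sshf{F}\to\underline{\Omega}^0_F$ quasi-isomorphisms; since two of the three verticals are quasi-isomorphisms, so is the third, $\is{F/Y}\to\underline{\Omega}^0_{Y,F}$. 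Thus $(Y,F)$ is a DB pair.

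Next I would transport the splitting along $f$. Because $F=f^{-1}(W)$, the construction of the Du Bois complexes of the two pairs via compatible hyperresolutions yields a natural map $v:\underline{\Omega}^0_{X,W}\to\mathcal{R}f_*\underline{\Omega}^0_{Y,F}$ sitting in a commutative square
\[
\begin{CD}
\is{W/X} @>\alpha>> \underline{\Omega}^0_{X,W}\\
@V\rho VV @VVvV\\
\mathcal{R}f_*\is{F/Y} @>\beta>\sim> \mathcal{R}f_*\underline{\Omega}^0_{Y,F},
\end{CD}
\]
where $\alpha$ is the natural map and $\beta$ is $\mathcal{R}f_*$ applied to the quasi-isomorphism of the first step (so $\beta$ is invertible in $D^b_{coh}(X)$). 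Commutativity gives $v\circ\alpha=\beta\circ\rho$, so the composite $\rho'\circ\beta^{-1}\circ v$ satisfies $(\rho'\circ\beta^{-1}\circ v)\circ\alpha=\rho'\circ\rho=\mathrm{id}_{\is{W/X}}$ and is therefore a left inverse of $\alpha$ in $D^b_{coh}(X)$. By the splitting criterion for Du Bois singularities of Kov\'acs (a derived-category left inverse of the natural map forces it to be a quasi-isomorphism), it follows that $(X,W)$ is a DB pair.

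Finally I would descend to $X$ by the same comparison of triangles, now
\[
\is{W/X}\to\sshf{X}\to\sshf{W}\xrightarrow{+1}
\qquad\text{and}\qquad
\underline{\Omega}^0_{X,W}\to\underline{\Omega}^0_X\to\underline{\Omega}^0_W\xrightarrow{+1}.
\]
The previous step makes $\is{W/X}\to\underline{\Omega}^0_{X,W}$ a quasi-isomorphism and the hypothesis that $W$ is DB makes $\sshf{W}\to\underline{\Omega}^0_W$ one, so the remaining vertical $\sshf{X}\to\underline{\Omega}^0_X$ is a quasi-isomorphism; that is, $X$ is DB. I expect the second step to be the crux: one must pin down the functorial map $v$ on the $\mathrm{Gr}^0$ pieces of the Du Bois complexes of pairs and verify that the square commutes in the derived category, and one must apply the splitting criterion in its correct derived form---a left inverse in $D^b_{coh}(X)$, not merely a splitting of individual cohomology sheaves. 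The first and third steps are then formal.
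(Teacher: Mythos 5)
This statement is quoted in the paper from \cite[Theorem 1.6]{KK}; the authors give no proof of their own, so there is no internal argument to compare yours against. That said, your reduction is essentially the one Koll\'ar and Kov\'acs themselves use. Steps one and three (the ``two out of three'' comparison of the triangles $\is{F/Y}\to\sshf{Y}\to\sshf{F}\xrightarrow{+1}$ and $\underline{\Omega}^0_{Y,F}\to\underline{\Omega}^0_Y\to\underline{\Omega}^0_F\xrightarrow{+1}$, and likewise for $(X,W)$) are correct and standard. Step two is also correct as a formal computation: granting the functorial map $v:\underline{\Omega}^0_{X,W}\to\mathcal{R}f_*\underline{\Omega}^0_{Y,F}$ and the commutativity of your square, the composite $\rho'\circ\beta^{-1}\circ v$ is indeed a left inverse of $\alpha:\is{W/X}\to\underline{\Omega}^0_{X,W}$ in the derived category. (The existence of $v$ and the commutativity are genuine issues since cones are not functorial, but they are resolved in the source via compatible hyperresolutions and the same ``Lemma 2.1''-type device the present paper uses in its proof of Theorem \ref{iff}; you rightly flag this.)

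The one substantive caveat is your appeal to ``the splitting criterion of Kov\'acs'' in its \emph{pair} form, namely that a derived-category left inverse of $\is{W/X}\to\underline{\Omega}^0_{X,W}$ forces $(X,W)$ to be a DB pair. The absolute version (for $\sshf{X}\to\underline{\Omega}^0_X$) is an earlier theorem of Kov\'acs, but the pair version is not an off-the-shelf prior lemma: it is the technical heart of \cite{KK} itself (their Theorem 3.3), proved there by induction on dimension using hyperresolutions, Serre-type vanishing for $\underline{\Omega}^0$, and the identification of $h^0(\underline{\Omega}^0)$ with the structure sheaf of the seminormalization. So your proposal is a faithful and correct reconstruction of the formal reduction by which Theorem 1.6 follows from that splitting criterion, but the hard analytic content has been delegated to a black box that is, in effect, part of the theorem's own proof rather than an independent ingredient.
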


Let $V$ be a subspace of $H^0(X, L)$. Given a variety $X\subset\mathbb{P}(V)$, denote the secant variety by $\Sigma(X, V)$. The notation $\Sigma(X, L)$ coincides with $\Sigma(X, H^0(X, L))$. Theorem \ref{main theorem} can be slightly strengthened as follows.

\begin{theorem}\label{Du Bois}
Let $X\subset\mathbb{P}(V)$ be a smooth projective variety of dimension $n$ such that $L=\sshf{\mathbb{P}(V)}(1)|_X$ satisfies Assumption \ref{assumption}. Suppose $\Sigma(X, V)$ is normal. Then $\Sigma(X, V)$ has Du Bois singularities.
\end{theorem}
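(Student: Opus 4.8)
The plan is to apply the Kollár--Kovács criterion (Theorem \ref{KK}) to the natural resolution $t:\mathbb{P}(E_L)\to\Sigma:=\Sigma(X,V)$, taking $W=X\subset\Sigma$ and $F=t^{-1}(X)=\Phi\subset\mathbb{P}(E_L)$. Since $\mathbb{P}(E_L)$ is smooth, $\Phi$ is a smooth divisor, and $X$ is smooth, all three of $Y,F,W$ have Du Bois singularities (indeed rational), so the only thing to verify is that the natural map
\begin{equation*}
\rho:\is{X/\Sigma}\longrightarrow \mathcal{R}t_*\is{\Phi/\mathbb{P}(E_L)}=\mathcal{R}t_*\sshf{\mathbb{P}(E_L)}(-\Phi)
\end{equation*}
admits a left inverse $\rho'$. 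Once $\rho$ is shown to be an isomorphism (or at least split), the theorem follows immediately from Theorem \ref{KK}.

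The key computation is that $\rho$ is a quasi-isomorphism, and this is exactly what Proposition \ref{vanishing of higher direct images} is built to give. First I would establish that $t_*\sshf{\mathbb{P}(E_L)}(-\Phi)=\is{X/\Sigma}$ in degree zero: the higher direct images $R^it_*\sshf{\mathbb{P}(E_L)}(-\Phi)$ vanish for $i>0$ by Proposition \ref{vanishing of higher direct images}, so $\mathcal{R}t_*\sshf{\mathbb{P}(E_L)}(-\Phi)$ is concentrated in degree zero and equals the sheaf $t_*\sshf{\mathbb{P}(E_L)}(-\Phi)$. To identify this with $\is{X/\Sigma}$, I would use the commutative square relating $\sshf{\mathbb{P}(E_L)}(-\Phi)\hookrightarrow\sshf{\mathbb{P}(E_L)}$ with $\is{X/\Sigma}\hookrightarrow\sshf{\Sigma}$: pushing forward the short exact sequence $0\to\sshf{\mathbb{P}(E_L)}(-\Phi)\to\sshf{\mathbb{P}(E_L)}\to\sshf{\Phi}\to 0$ and using that $\Sigma$ is normal (so $t_*\sshf{\mathbb{P}(E_L)}=\sshf{\Sigma}$) together with the fact that $\Phi$ maps onto $X$, the push-forward $t_*\sshf{\mathbb{P}(E_L)}(-\Phi)$ lands inside $\is{X/\Sigma}$; conversely Lemma \ref{higher direct images of structure sheaf} and diagram \eqref{Geometry of secant varieties} let me pin down $t_*\sshf{\Phi}$ and hence identify the image precisely as $\is{X/\Sigma}$. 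With $R^it_*\sshf{\mathbb{P}(E_L)}(-\Phi)=0$ for $i>0$, the map $\rho$ is then a genuine isomorphism in the derived category, which is a fortiori a split injection, so $\rho'$ exists.

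The main obstacle is the identification $t_*\sshf{\mathbb{P}(E_L)}(-\Phi)\cong\is{X/\Sigma}$ in degree zero, rather than the vanishing of higher direct images (which is already packaged in Proposition \ref{vanishing of higher direct images}). The delicate point is that a priori $t_*\sshf{\mathbb{P}(E_L)}(-\Phi)$ could be a proper subsheaf of $\is{X/\Sigma}$, or one must rule out that pushing the ideal sheaf down creates extra sections vanishing on $X$ only to higher order; normality of $\Sigma$ is the crucial hypothesis that forces $t_*\sshf{\mathbb{P}(E_L)}=\sshf{\Sigma}$ and thereby controls this comparison. I would handle it by a local argument at a point $x\in X$, comparing the ideal generated by local equations on $\Sigma$ with the sections of $\sshf{\mathbb{P}(E_L)}(-\Phi)$ via the formal-functions description already employed in Proposition \ref{vanishing of higher direct images}, which shows that sections of $\sshf{\mathbb{P}(E_L)}(-\Phi)$ correspond exactly to functions on $\Sigma$ vanishing along $X$.

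Finally, I note that since $\rho$ is an isomorphism and all of $\mathbb{P}(E_L)$, $\Phi$, and $X$ are smooth hence Du Bois, Theorem \ref{KK} applies verbatim to conclude that $\Sigma(X,V)$ is Du Bois. The hypothesis that $L$ satisfies Assumption \ref{assumption} enters only through Proposition \ref{vanishing of higher direct images} (via Lemma \ref{ullery} and the Kodaira vanishing establishing \eqref{vanishing}), and the normality hypothesis enters only in the degree-zero identification; no further positivity is needed.
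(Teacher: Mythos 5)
Your argument correctly reproduces the paper's proof, but only for the special case $V=H^0(X,L)$; it has a genuine gap for the general statement. The theorem allows $V$ to be a \emph{proper} subspace of $H^0(X,L)$ (that is precisely how it strengthens the earlier formulation), and in that case the resolution $t$ you invoke does not exist in the form you use it: $\mathbb{P}(E_L)$ sits inside $X^{[2]}\times\mathbb{P}(H^0(X,L))$ and its second projection maps onto $\Sigma(X,L)\subset\mathbb{P}(H^0(X,L))$, not onto $\Sigma(X,V)\subset\mathbb{P}(V)$. Nothing in the hypotheses guarantees that the sub-linear-system $V$ is itself $3$-very ample, so you cannot simply rerun the construction of Section 2 with $V$ in place of $H^0(X,L)$; in particular the identifications $t^{-1}(X)=\Phi$, $\mathcal{N}^*_{F_x/\mathbb{P}(E_L)}\iso\sshf{F_x}^{\oplus n}\oplus b_x^*L(-2E_x)$, and the vanishing of Proposition \ref{vanishing of higher direct images} are all statements about the resolution of $\Sigma(X,L)$, not of $\Sigma(X,V)$.

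The paper closes this gap with a separate reduction step: choosing a linear subspace $\Lambda\subset\mathbb{P}(H^0(X,L))$ disjoint from $\mathbb{P}(V)$ of complementary dimension, the projection from $\Lambda$ induces a morphism $p_{\Sigma}:\Sigma(X,L)\rightarrow\Sigma(X,V)$ which is shown to be finite and surjective (finiteness because a positive-dimensional fiber would force $\Sigma(X,L)$ to meet $\Lambda$). Since $\Sigma(X,L)$ is Du Bois by the first part and $\Sigma(X,V)$ is assumed normal, \cite[Corollary 2.4]{KK} then yields that $\Sigma(X,V)$ is Du Bois. This is where the normality hypothesis on $\Sigma(X,V)$ is actually used in the general case; in your write-up normality only enters to give $t_*\sshf{\mathbb{P}(E_L)}=\sshf{\Sigma}$, which (via Zariski's main theorem, or Ullery's result as the paper cites) is the right input for the complete linear system but does not address the descent to $\mathbb{P}(V)$. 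The portion of your argument identifying $\mathcal{R}t_*\sshf{\mathbb{P}(E_L)}(-\Phi)\iso\is{X/\Sigma}$ and feeding it into Theorem \ref{KK} is correct and is exactly the paper's first step.
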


\begin{proof}
We first consider the special case that $V=H^0(X, \sshf{\mathbb{P}(V)}(1)|_X)$.
We claim that $t_*\sshf{\mathbb{P}(E_{L})}(-\Phi)\cong \is{X/{\Sigma}}$, where $\is{X/{\Sigma}}$ denotes the ideal sheaf of $X$ in $\Sigma$.
By the assumption on $L$, the main result in \cite{Ullery14} implies that $t_*\sshf{\mathbb{P}(E_{L})}=\mathcal{O}_{\Sigma}$.
So from the exact sequence
\begin{equation*}
    \ses{\sshf{\mathbb{P}(E_{L})}(-\Phi)}{\sshf{\mathbb{P}(E_{L})}}{\sshf{\Phi}},
\end{equation*}
we see that the claim follows from the fact $\mathcal{O}_X\iso q_*\mathcal{O}_{\Phi}$, see Lemma \ref{higher direct images of structure sheaf}.

With Proposition \ref{vanishing of higher direct images}, this implies that
$\mathcal{R}t_*\sshf{\mathbb{P}(E_{L})}(-\Phi)\cong \is{X/{\Sigma}}$ in the derived category.
In particular, the following splitting sequence holds
\begin{equation*}
\is{X/{\Sigma}}\rightarrow \mathcal{R}t_*\sshf{\mathbb{P}(E_{L})}(-\Phi) \rightarrow \is{X/{\Sigma}}.
\end{equation*}
Since $X$, $\Phi$ and $\mathbb{P}(E_{L})$ are all smooth, $\Sigma(X, L)$ has Du Bois singularities by Theorem \ref{KK}.

In general let $V\subseteq H^0(X, L)$ be a very ample linear system. For any $\xi\in X^{[2]}$, one has the natural commutative diagram
$$\xymatrix{
V \ar@{->>}[dr]\ar@{^{(}->}[d] & \\
H^0(X, L)\ar@{->>}[r] & H^0(X, L|_{\xi}).}$$

There exists a projective space $\Lambda\subseteq \mathbb{P}(H^0(X, L))$ such that $\Lambda\cap \mathbb{P}(V)=\emptyset$ and $\dim\Lambda+\dim \mathbb{P}(V)=\dim\mathbb{P}(H^0(X, L))-1$. Then one gets a projection $p: \mathbb{P}(H^0(X, L))\backslash{\Lambda}\rightarrow \mathbb{P}(V)$, which induces a morphism $p_{\Sigma}$, as shown below

$$\xymatrix{
 \Sigma(X, L) \ar[d]_{p_{\Sigma}}\ar@{^{(}->}[r] & \mathbb{P}(H^0(X, L))\backslash{\Lambda}\ar[d]_{p}\\
\Sigma(X, V) \ar@{^{(}->}[r]& \mathbb{P}(V).}$$

Take any point $p\in \mathbb{P}(V)$, let $\Lambda_p$ be the linear span by $\Lambda$ and $p$. Then set theoretically $p^{-1}_{\Sigma}(p)=\Sigma(X, L)\cap \Lambda_p$. Since $\Lambda$ is a hyperplane of $\Lambda_p$, $\Sigma(X, L)\cap \Lambda_p$ is a finite set of points, for otherwise $\Sigma(X, L)\cap\Lambda\neq\emptyset$. This indicates that $p_{\Sigma}$ is a finite morphism. Moreover $p_{\Sigma}$ is surjective. Applying \cite[Corollary 2.4]{KK} and using the fact that $\Sigma(X, L)$ is DB and the assumption that $\Sigma(X, V)$ is normal, we deduce that $\Sigma(X, V)$ is DB.
\end{proof}

Next we show that in some special cases, $\Sigma(X,L)$ has singularities more directly related to the minimal model program. For example $X=\mathbb{P}^2$, satisfies the hypothesis in the following theorem.

\begin{theorem}\label{log canonical}
If for any $x\in X$, $F_x\iso Bl_x X$ is a Fano variety, then on $\Sigma(X,L)$ there exists a boundary $\Delta$ such that $(\Sigma(X,L), \Delta)$ is a log canonical pair.
\end{theorem}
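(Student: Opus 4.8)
The plan is to produce an explicit boundary divisor $\Delta$ on $\Sigma=\Sigma(X,L)$ supported on $X$ and to verify log canonicity by running the discrepancy computation on the natural resolution $t:\mathbb{P}(E_L)\to\Sigma$ whose exceptional divisor is $\Phi$. Since $\Sigma$ is normal (by Assumption \ref{assumption}) with singular locus exactly $X$, it suffices to understand the relative canonical divisor of $t$ together with the pullback of $\Delta$ along $\Phi$, and to check that all resulting discrepancies are $\ge -1$. The first step is therefore to compute the discrepancy of $\Phi$, i.e.~to find the coefficient $a$ in the expression $K_{\mathbb{P}(E_L)}\sim t^*(K_\Sigma+\Delta)+a\,\Phi$, which amounts to comparing $\omega_{\mathbb{P}(E_L)}$ with $t^*\omega_\Sigma$ along the exceptional locus.

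The key geometric input is that $F_x\iso Bl_x X$ is Fano for every $x\in X$. I would use this fiberwise positivity of $-K_{F_x}$ to control the restriction of the relative canonical bundle $K_{\mathbb{P}(E_L)/\Sigma}$ to the fibers of $q:\Phi\to X$, via the adjunction formula
\begin{equation*}
K_\Phi\iso \paren{K_{\mathbb{P}(E_L)}+\Phi}\big|_\Phi.
\end{equation*}
Combined with Lemma \ref{ullery}, which identifies the conormal bundle $\shf{N}^*_{F_x/\mathbb{P}(E_L)}$ and hence the normal bundle $\sshf{\Phi}(\Phi)|_{F_x}\iso b_x^*L^{-1}(2E_x)$, this lets me read off $\omega_{F_x}$ and express the restriction $K_{\mathbb{P}(E_L)}|_{F_x}$ in terms of $b_x^*L$ and the exceptional divisor $E_x$ of $b_x:F_x\to X$. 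The Fano condition on $F_x$ ensures that the discrepancy contribution of $\Phi$ is strictly greater than $-1$, which is precisely what is needed for $\Phi$ to be an admissible log canonical place. The boundary $\Delta$ should then be taken as a suitable multiple $c\,X$ (or a divisor supported on $X$) whose pullback $t^*\Delta$ exactly absorbs the positivity coming from $-K_{F_x}$, so that the log discrepancy of $\Phi$ with respect to $(\Sigma,\Delta)$ is nonnegative.

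Concretely, I would write $t^*\Delta = \mu\,\Phi + (\text{terms supported away from }\Phi)$ and determine $\mu$ from the requirement that $K_{\mathbb{P}(E_L)}+t^{-1}_*\Delta = t^*(K_\Sigma+\Delta)$ have the coefficient of $\Phi$ bounded below by $-1$; the Fano hypothesis translates into the inequality $\mu \le 1 + a$, guaranteeing $(\Sigma,\Delta)$ is log canonical near $X$. Away from $X$ the pair is just a smooth variety with a chosen boundary, so no further condition arises there.

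\textbf{Main obstacle.} The delicate point will be the global coherence of the construction: the conormal bundle splitting in Lemma \ref{ullery} and the Fano structure are stated fiberwise over $x\in X$, so I must ensure that the coefficient $\mu$ chosen to make the fiberwise log discrepancy work is constant along $X$ and yields a genuine $\mathbb{Q}$-divisor $\Delta$ on $\Sigma$, rather than merely a collection of fiberwise bounds. In other words, the hard part is upgrading the pointwise Fano positivity of $-K_{F_x}$ into a single numerical discrepancy statement for the divisor $\Phi$ over all of $X$ at once, and confirming that the resulting $\Delta$ is effective and that no other exceptional valuation over $X$ violates the log canonical threshold. Verifying that $\Phi\to X$ being a fibration with Fano fibers suffices to bound discrepancies of \emph{all} divisors over $X$ (not just $\Phi$ itself) may require an inversion-of-adjunction type argument or an appeal to the structure of $t$ as a blow-up-like morphism with irreducible exceptional divisor $\Phi$.
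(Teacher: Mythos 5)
Your proposal has two concrete problems that prevent it from working as stated. First, you propose to take $\Delta$ to be a $\mathbb{Q}$-divisor ``supported on $X$'' (e.g.\ a multiple $c\,X$), but $X$ has codimension $n+1\ge 2$ in $\Sigma(X,L)$ (recall $\dim\Sigma=2n+1$), so $X$ cannot support a boundary divisor; any admissible $\Delta$ must be an honest codimension-one cycle, hence built from divisors meeting $X$, not from $X$ itself. Second, and more seriously, the discrepancy equation $K_{\mathbb{P}(E_L)}\sim t^*(K_\Sigma+\Delta)+a\,\Phi$ on which your computation hinges presupposes that $K_\Sigma+\Delta$ is $\mathbb{Q}$-Cartier. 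The secant variety is not known to be $\mathbb{Q}$-Gorenstein, and with $\Delta$ ``supported on $X$'' (hence zero as a divisor) there is nothing to pull back; producing a $\Delta$ for which $K_\Sigma+\Delta$ is $\mathbb{Q}$-Cartier is essentially the whole content of the theorem, and no fiberwise bound on $-K_{F_x}$ can supply it. Your closing worry about controlling all divisorial valuations over $X$ is actually the lesser issue: once one has a genuine $\mathbb{Q}$-Cartier pair and a log resolution of it, checking coefficients on that single resolution suffices.

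The paper's proof goes the other way around and never computes a discrepancy on $\Sigma$ directly. From the adjunction $(K_{\mathbb{P}(E_L)}+\Phi)|_{F_x}=K_{F_x}$ and the Fano hypothesis one gets that $-(K_{\mathbb{P}(E_L)}+\Phi)$ is $t$-ample; adding $t^*H$ for $H$ ample on $\Sigma$ makes $N=-(K_{\mathbb{P}(E_L)}+\Phi)+t^*H$ ample, and a general reduced $D\sim mN$ meeting $\Phi$ transversely yields a simple normal crossing boundary $\Delta'=\Phi+\frac{1}{m}D$ with coefficients at most one and $K_{\mathbb{P}(E_L)}+\Delta'\sim_{t,\mathbb{Q}}0$. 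Thus $(\mathbb{P}(E_L),\Delta')$ is a log canonical pair that is log-trivial over $\Sigma$, and \cite[Lemma 1.1]{Fujino12} (a descent statement for lc-trivial fibrations) then produces a boundary $\Delta$ on $\Sigma$ with $(\Sigma,\Delta)$ log canonical; in particular it is this lemma, not a direct computation, that delivers the $\mathbb{Q}$-Cartierness of $K_\Sigma+\Delta$. You correctly located where the Fano hypothesis enters (adjunction along the fibers of $q$), but the missing idea is to convert that positivity into a relatively trivial log canonical structure upstairs and descend it, rather than to chase discrepancies of a pair that has not yet been shown to exist.
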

\begin{proof}
For any $x\in X$, $F_x$ is a fiber of $q: \Phi \rightarrow X$, so by adjunction formula we have
\begin{equation*}
(K_{\mathbb{P}(E_L)}+\Phi)\mid _{F_x}=K_{F_x}.
\end{equation*}
In other words, $-(K_{\mathbb{P}(E_L)}+\Phi)$ is relative ample over $\Sigma(X, L)$. (Recall that $t: \mathbb{P}(E_L)\rightarrow \Sigma(X,L)$ is isomorphic outside of $\Phi$.)

Pick an ample line $H$ on $\Sigma(X,L)$ such that $N:=-(K_{\mathbb{P}(E_L)}+\Phi)+t^*H$ is ample.
For $m\gg 0$, we can assume $mN\sim D$, where $D$ is a reduced divisor intersecting $\Phi$ transversely. Let $\Delta '=\Phi +\frac{1}{m}D$. We see that
\begin{equation*}
K_{\mathbb{P}(E_L)}+\Delta '\sim _{t, \mathbb{Q}}0.
\end{equation*}
Since $\Phi$ is a smooth divisor on $\mathbb{P}(E_L)$, $\Delta'$ is a linear combination of simple normal crossing divisors with coefficients at most one. So $\paren{\mathbb{P}(E_L), \Delta '}$ is a log canonical pair. Then \cite[Lemma 1.1]{Fujino12} implies that there exists a boundary $\Delta$ such that $(\Sigma(X,L), \Delta)$ is a log canonical pair.
\end{proof}

 %%%%%%%%%%%%%Non DuBois examples

\section {Non Du Boisness under weaker positivity}

In this section we consider the situation when the embedding line bundle $L$ fails to satisfy Assumption \ref{assumption}.
We show that the secant variety may still be normal, but not Du Bois anymore.

%%%%%%%%%% normal even if  L doesn't satisfy assumption 2.1 BEGIN%%%%%%%%

\subsection{Example of normal secant varieties}

The example below shows that Assumption \ref{assumption} is not necessary for the normality of secant varieties.

\begin{example}\label{normality under weak assumption}
Let $X\subset \mathbb{P}:=\mathbb{P}^{n+1}$ be an $n$-dimensional smooth hypersurface of degree $d$. By adjunction, $\omega_X\iso \sshf{X}(d-n-2)$. Fix an integer $k\ge 3$ and put $L=\sshf{X}(k)\iso \omega_X\otimes\sshf{X}(k-d+n+2)$, which is $3$-very ample.
\vspace{0.1cm}

Claim: $\Sigma(X, L)$ is normal provided that $k\ge \frac{d+3}{2}$

\begin{proof}
By \cite[Theorem D]{Ullery14}, it is sufficient to show that for each $x\in X$, $b^*_xL(-2E_x)$ is normally generated on the blowup $\text{Bl}_xX$. By pushing it down to $X$, it amounts to showing for each positive integer $r$
\begin{equation*}
    H^0(X, L\otimes\frak{m}^2_x)\otimes H^0(X, L^r\otimes\frak{m}^{2r}_x)\rightarrow H^0(X, L^{r+1}\otimes\frak{m}^{2(r+1)}_x )
\end{equation*}
is surjective.

To this end, consider the commutative diagram
$$\xymatrix{
H^0(\mathbb{P}, \sshf{\mathbb{P}}(k)\otimes\frak{m}^2_x )\otimes H^0(\mathbb{P}, \sshf{\mathbb{P}}(rk)\otimes\frak{m}^{2r}_x)\ar[r]\ar[d] & H^0(\mathbb{P}, \sshf{\mathbb{P}}((r+1)k)\otimes\frak{m}^{2(r+1)}_x )\ar[d]\ar[d]^{\alpha_{r+1}}\\
 H^0(X, L\otimes\frak{m}^2_x)\otimes H^0(X, L^r\otimes\frak{m}^{2r}_x) \ar[r] & H^0(X, L^{r+1}\otimes\frak{m}^{2(r+1)}_x ).}$$
After a change of coordinate for $\mathbb{P}$, $x=[1:0:\cdots :0]$ and a basis of the vector space $H^0(\mathbb{P}, \sshf{\mathbb{P}}(rk)\otimes\frak{m}^{2r}_x)$ is given by the set of monomials
\begin{equation*}
    \{x^{e_0}_0\cdots x^{e_{n+1}}_{n+1}\quad \big| \sum_i{e_i}=rk, e_0\le kr-2r\},
\end{equation*}
then it is straightforward to check that the top multiplication map is surjective. On the other hand,
if $k\ge\frac{d+3}{2}$, then $(r+1)k-d\ge 2(r+1)-1$. Applying Kawamata-Viehweg vanishing on the blowup of $\mathbb{P}$ at $x$ and noting that $\sshf{\mathbb{P}}(1)\otimes \frak{m}_x$ is globally generated, one obtains that
\begin{equation*}
    H^1(\mathbb{P}, \sshf{\mathbb{P}}((r+1)k-d)\otimes \frak{m}^{2(r+1)}_x)=0.
\end{equation*}
Using the standard exact sequence
\begin{equation*}
    \ses{\sshf{\mathbb{P}}(-d)}{\sshf{\mathbb{P}}}{\sshf{X}},
\end{equation*}
we deduce that the vertical map $\alpha_{r+1}$ is surjective, and the assertion follows.
\end{proof}
\end{example}
%%%%%%%%%% normal even if  L doesn't satisfy assumption 2.1 BEGIN%%%%%%%%

%%%%%%%%%if and only if for Du Bois BEGIN%%%%%%%%%%%

\subsection{Examples of non Du Bois secant varieties}

In this subsection we show that if the embedding line bundle $L$ does not satisfy  assumption \ref{assumption}, then
the secant variety $\Sigma(X,L)$ may not be DB, even it is normal.
In fact we prove a necessary and sufficient condition for a secant variety to be DB.
The condition is similar to Steenbrink's characterization of isolated DB singularities (cf.~\cite[Corollary 1.5]{Ishii}, \cite{Steenbrink83}).

\begin{theorem} \label{iff}
Suppose $L$ is 3-very ample and $\Sigma$ is seminormal. Then $\Sigma$ is Du Bois if and only if $R^it_*\sshf{\mathbb{P}(E_L)}(-\Phi)=0$ for all $i> 0$.
\end{theorem}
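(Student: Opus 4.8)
The plan is to characterize the Du Bois property of $\Sigma$ through the cohomology sheaves of its Deligne--Du Bois complex $\underline{\Omega}^0_\Sigma$ and to compute those sheaves from the resolution $t$. Recall that $\Sigma$ is Du Bois precisely when the natural map $\sshf{\Sigma}\to\underline{\Omega}^0_\Sigma$ is a quasi-isomorphism. Since $\Sigma$ is assumed seminormal, the zeroth cohomology already matches: by the general fact that $H^0(\underline{\Omega}^0_\Sigma)\cong\sshf{\Sigma^{\mathrm{sn}}}$, the structure sheaf of the seminormalization (due to Saito/Schwede, as used in \cite{KK}), seminormality gives $H^0(\underline{\Omega}^0_\Sigma)\cong\sshf{\Sigma}$ compatibly with the natural map. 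Thus $\Sigma$ is Du Bois if and only if $H^i(\underline{\Omega}^0_\Sigma)=0$ for all $i>0$, and the problem reduces to identifying these higher cohomology sheaves with $R^it_*\sshf{\mathbb{P}(E_L)}(-\Phi)$.

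To compute $\underline{\Omega}^0_\Sigma$ I would invoke the descent (abstract-blowup) triangle for Deligne--Du Bois complexes associated to $t$: because $t$ is proper, an isomorphism over $\Sigma\setminus X$, and $\Phi=t^{-1}(X)$ with its reduced structure, there is a distinguished triangle
$$\underline{\Omega}^0_\Sigma \longrightarrow \underline{\Omega}^0_X\oplus \mathcal{R}t_*\underline{\Omega}^0_{\mathbb{P}(E_L)} \longrightarrow \mathcal{R}t_*\underline{\Omega}^0_{\Phi}\xrightarrow{\ +1\ }.$$
Since $X$, $\mathbb{P}(E_L)$ and $\Phi$ are all smooth, the three outer terms simplify to $\sshf{X}$, $\mathcal{R}t_*\sshf{\mathbb{P}(E_L)}$ and $\mathcal{R}t_*\sshf{\Phi}$. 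Moreover $t|_\Phi$ is $q$ followed by $X\hookrightarrow\Sigma$, so $\mathcal{R}t_*\sshf{\Phi}$ is the pushforward to $\Sigma$ of $\mathcal{R}q_*\sshf{\Phi}$, whose cohomology sheaves are $H^i(X,\sshf{X})\otimes\sshf{X}$ by Lemma \ref{higher direct images of structure sheaf}; in particular $\sshf{X}\to\mathcal{R}q_*\sshf{\Phi}$ is an isomorphism on $H^0$.

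The next step is to compare the long exact sequence of this triangle with that obtained by applying $\mathcal{R}t_*$ to the structural sequence $\ses{\sshf{\mathbb{P}(E_L)}(-\Phi)}{\sshf{\mathbb{P}(E_L)}}{\sshf{\Phi}}$, which identifies $\mathrm{cone}(\mathcal{R}t_*\sshf{\mathbb{P}(E_L)}\to\mathcal{R}t_*\sshf{\Phi})$ with $\mathcal{R}t_*\sshf{\mathbb{P}(E_L)}(-\Phi)[1]$. Feeding this into the descent triangle, and using that the extra summand $\sshf{X}$ maps isomorphically onto $H^0(\mathcal{R}q_*\sshf{\Phi})$, so that together with the surjectivity of $t_*\sshf{\mathbb{P}(E_L)}\to\sshf{X}$ on $H^0$ the $H^i(X,\sshf{X})$-contributions in $\mathcal{R}t_*\sshf{\Phi}$ cancel against those coming from $\mathcal{R}t_*\sshf{\mathbb{P}(E_L)}$, a diagram chase over the two long exact sequences should yield $H^i(\underline{\Omega}^0_\Sigma)\cong R^it_*\sshf{\mathbb{P}(E_L)}(-\Phi)$ for every $i>0$. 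Combined with the first paragraph this gives the equivalence: if $\Sigma$ is Du Bois these higher direct images all vanish, and conversely their vanishing forces $H^i(\underline{\Omega}^0_\Sigma)=0$ for $i>0$, which with the seminormal identification of $H^0$ makes $\sshf{\Sigma}\to\underline{\Omega}^0_\Sigma$ a quasi-isomorphism.

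The main obstacle I anticipate is the bookkeeping of the connecting homomorphisms in the two long exact sequences, precisely the claim that the $H^i(X,\sshf{X})$ terms occurring in $\mathcal{R}t_*\sshf{\Phi}$ cancel and leave exactly $R^it_*\sshf{\mathbb{P}(E_L)}(-\Phi)$. This is where the explicit description of $\mathcal{R}q_*\sshf{\Phi}$ from Lemma \ref{higher direct images of structure sheaf} and the $H^0$-isomorphism $\sshf{X}\to\mathcal{R}q_*\sshf{\Phi}$ are essential; to control the maps one should phrase the comparison functorially, via the octahedral axiom applied to the two triangles, or equivalently through the Du Bois pair $(\Sigma,X)$ with $\underline{\Omega}^0_{\Sigma,X}\cong\mathcal{R}t_*\sshf{\mathbb{P}(E_L)}(-\Phi)$, rather than by naive chasing. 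The remaining inputs, namely the descent triangle, the seminormal computation of $H^0(\underline{\Omega}^0_\Sigma)$, and the smoothness of $X,\Phi,\mathbb{P}(E_L)$ guaranteed by $3$-very ampleness, are standard.
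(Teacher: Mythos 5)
Your proposal is correct and follows essentially the same route as the paper: the descent triangle for $t$, the identification $\underline{\Omega}^0_{\Sigma,X}\simeq \mathcal{R}t_*\sshf{\mathbb{P}(E_L)}(-\Phi)$ (which the paper establishes via \cite[Lemma 2.1]{KK} applied to the two triangles, rather than by a bare chase through the two long exact sequences), and seminormality to identify $h^0(\underline{\Omega}^0_{\Sigma})$ with $\sshf{\Sigma}$ and thereby make $h^0(\underline{\Omega}^0_{\Sigma})\rightarrow\sshf{X}$ surjective. The ``cancellation'' you flag as the main obstacle is exactly what the Du Bois pair formulation you mention at the end takes care of, and that formulation is precisely the paper's argument.
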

\begin{proof}
By the definition of Du Bois pair, we have the following diagram
$$\xymatrix{
{\underline{\Omega}^0_{\Sigma , X}} \ar[d] ^{\alpha}\ar[r] &{\underline{\Omega}^0_{\Sigma }} \ar[d]^{\beta} \ar[r]^{\psi} &{\underline{\Omega}^0_{ X}}\ar[d]\ar[r]^{+1}&& \\
R  t_*{\underline{\Omega}^0_{\mathbb{P}(E_L) , \Phi}} \ar[r] &R  t_*{\underline{\Omega}^0_{\mathbb{P}(E_L) }} \ar[r] &R  t_*{\underline{\Omega}^0_{\Phi} } \ar[r]^{+1} &&} $$
where the map $\alpha$ is induced from the other two vertical natural maps.
By \cite[Lemma 2.1]{KK}, there is an object $D$ fitting in the distinguished triangle
\begin{equation} \label{D}
D\longrightarrow {\underline{\Omega}^0_{ X}} \oplus R  t_*{\underline{\Omega}^0_{\mathbb{P}(E_L) }} \longrightarrow R  t_*{\underline{\Omega}^0_{\Phi} } \overset{+1}{\longrightarrow}.
\end{equation}
In fact the object $D$ is just the mapping cone shifted to the left by one.
As a result we can define a map $\delta : {\underline{\Omega}^0_{\Sigma }}  \longrightarrow D$ by $\psi \oplus \beta.$

Moreover since $t$ is an isomorphism outside of $X$, we have the following distinguished triangle (see for instance \cite[Theorem 6.5 (10)]{kollar:book}),
\begin{equation*}
    {\underline{\Omega}^0_{\Sigma}}\rightarrow {\underline{\Omega}^0_X\oplus Rt_* \underline{\Omega}^0_{\mathbb{P}(E_L)}}\longrightarrow {Rt_* \underline{\Omega}^0_{\Phi}}\xrightarrow{+1}.
\end{equation*}
Combining with (\ref{D}), we obtain the following commutative diagram
$$\xymatrix{
{\underline{\Omega}^0_{\Sigma}} \ar[d] ^{\delta}\ar[r] &{\underline{\Omega}^0_X\oplus Rt_* \underline{\Omega}^0_{\mathbb{P}(E_L)}}\ar[d]^{=} \ar[r] &{Rt_* \underline{\Omega}^0_{\Phi}}\ar[d]^{=}\ar[r]^{+1}& \\
D \ar[r] &{\underline{\Omega}^0_X\oplus Rt_* \underline{\Omega}^0_{\mathbb{P}(E_L)}}\ar[r] &{Rt_* \underline{\Omega}^0_{\Phi}}\ar[r]^{+1} &} $$
In particular, we see that $\delta $ is a quasi-isomorphism.
Then by in \cite[Lemma 2.1]{KK}, $\alpha$ is a quasi-isomorphism.

Since $\mathbb{P}(E_L)$ and $\Phi$ are both smooth, $\underline{\Omega} ^0_{\mathbb{P}(E_L) , \Phi} \cong \mathcal{O}_{\mathbb{P}(E_l)}(-\Phi)$.
By the isomorphism $\alpha$, the sequence
\begin{equation*}
\underline{\Omega} ^0_{\Sigma,X}\longrightarrow \underline{\Omega}^0_{\Sigma } \longrightarrow \underline{\Omega}^0_X \overset{+1}{\longrightarrow}
\end{equation*}
becomes
\begin{equation*}
Rt_*\mathcal{O}_{\mathbb{P}(E_l)}(-\Phi) \longrightarrow \underline{\Omega}^0_{\Sigma } \longrightarrow \underline{\Omega}^0_{X }\overset{+1}{\longrightarrow},
\end{equation*}
which yields the following long exact sequence
\begin{equation*}
0\rightarrow t_*\mathcal{O}_{\mathbb{P}(E_l)}(-\Phi) \rightarrow h^0(\underline{\Omega}^0_{\Sigma })\rightarrow \mathcal{O}_X\rightarrow R^1t_*\mathcal{O}_{\mathbb{P}(E_l)}(-\Phi) \rightarrow h^1(\underline{\Omega}^0_{\Sigma }) \rightarrow \cdots .
\end{equation*}

On the other hand, since $\Sigma$ is seminormal, the natural map $\sshf{\Sigma}\rightarrow h^0(\underline{\Omega}^0_{\Sigma })$ is an isomorphism.
Therefore the map $h^0(\underline{\Omega}^0_{\Sigma })\rightarrow\sshf{X}$ is surjective. Besides note that $h^i(\underline{\Omega}^0_{X })=0$ for all $i > 0$, so
\begin{equation*}
R^it_*\mathcal{O}_{\mathbb{P}(E_l)}(-\Phi) \iso h^i(\underline{\Omega}^0_{\Sigma })
\end{equation*}
for all $i>0$.
In particular, $\Sigma$ is Du Bois if and only if $R^it_*\sshf{\mathbb{P}(E_L)}(-\Phi)=0$ for all $i>0$.
\end{proof}

\begin{remark}
The argument in the proof of Theorem \ref{iff} is essentially the same as the proof of \cite[Theorem 2.1]{GK},
where the case of cone singularities is considered.
\end{remark}

Next we provide examples indicating that there exist secant varieties to smooth varieties of arbitrary dimension which are normal but not DB.

\begin{proposition}\label{nonvanishing of higher direct image}
Let $L$ be a 3-very ample line bundle on $X$ of dimension $n$. If $H^n(X, L)\neq 0$, then $R^nt_*\sshf{\mathbb{P}(E_L)}(-\Phi)\neq 0$.
\end{proposition}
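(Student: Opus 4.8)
The plan is to localize at a closed point $x\in X$ and run the theorem on formal functions exactly as in the proof of Proposition \ref{vanishing of higher direct images}, but now to extract \emph{non}vanishing rather than vanishing. Since $t$ is an isomorphism over $\Sigma\setminus X$, it suffices to exhibit one $x\in X$ at which the stalk of $R^nt_*\sshf{\mathbb{P}(E_L)}(-\Phi)$ is nonzero; and since completion of the stalk is faithfully flat, it is enough to prove that
\[
\varprojlim_k\, H^n\paren{F_x,\ \sshf{\mathbb{P}(E_L)}(-\Phi)\otimes\sshf{\mathbb{P}(E_L)}/\is{F_x}^k}\neq 0 .
\]
Write $M_k:=H^n\paren{F_x,\ \sshf{\mathbb{P}(E_L)}(-\Phi)\otimes\sshf{\mathbb{P}(E_L)}/\is{F_x}^k}$ for the terms of this inverse system.

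First I would show that all transition maps $M_{k+1}\to M_k$ are surjective. Using the very same short exact sequences and the identification of the graded pieces obtained in the proof of Proposition \ref{vanishing of higher direct images} (via Lemma \ref{ullery}), the map $M_{k+1}\to M_k$ fits into a long exact sequence whose next term is $H^{n+1}\paren{F_x,\ \sshf{\mathbb{P}(E_L)}(-\Phi)\otimes\is{F_x}^k/\is{F_x}^{k+1}}$. Because $F_x\iso Bl_xX$ has dimension $n$, Grothendieck vanishing kills this group, so each transition map is surjective.

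Next I would compute the base term $M_1$. By the $k=1$ identification in Proposition \ref{vanishing of higher direct images}, $M_1\iso H^n(F_x,\,b_x^*L(-2E_x))$. Pushing forward along the blowup $b_x:F_x\to X$, the projection formula together with $R^qb_{x*}\sshf{F_x}(-2E_x)=0$ for $q>0$ and $b_{x*}\sshf{F_x}(-2E_x)\iso\frak{m}_x^2$ makes the Leray spectral sequence degenerate and gives $M_1\iso H^n(X,\,L\otimes\frak{m}_x^2)$. Finally the exact sequence $\ses{L\otimes\frak{m}_x^2}{L}{L\otimes\sshf{X}/\frak{m}_x^2}$, whose last term is a skyscraper sheaf, yields a surjection $H^n(X,L\otimes\frak{m}_x^2)\twoheadrightarrow H^n(X,L)$ (an isomorphism once $n\ge 2$). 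Hence the hypothesis $H^n(X,L)\neq 0$ forces $M_1\neq 0$.

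To conclude, an inverse system of finite-dimensional vector spaces with surjective transition maps satisfies the Mittag-Leffler condition and its limit surjects onto each term, so $\varprojlim_k M_k\twoheadrightarrow M_1\neq 0$; the limit is therefore nonzero and $R^nt_*\sshf{\mathbb{P}(E_L)}(-\Phi)\neq 0$. The delicate point is not the computation of $M_1$, which is routine blowup cohomology, but rather ensuring that nonvanishing of the single group $M_1$ propagates to nonvanishing of the inverse limit. This is precisely what the top-degree vanishing $H^{n+1}=0$ on the $n$-dimensional scheme $F_x$ provides, by rendering every transition map surjective.
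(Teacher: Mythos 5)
Your proposal is correct and follows essentially the same route as the paper's proof: formal functions at a point $x\in X$, identification of the $k=1$ term with $H^n(F_x, b_x^*L(-2E_x))\neq 0$, and surjectivity of all transition maps in the inverse system because $\dim F_x=n$ kills the relevant $H^{n+1}$, so a nonzero class in the base term lifts to the inverse limit. The only (harmless) difference is that you compute the base term as $H^n(X, L\otimes\frak{m}_x^2)$ surjecting onto $H^n(X,L)$, where the paper directly identifies it with $H^n(X,L)$; this is a bit more careful in the curve case $n=1$ but changes nothing.
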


\begin{proof}
By the theorem on formal functions (cf.~\cite[III. 11]{Hartshorne77}), one has the isomorphism
\begin{equation*}
   R^nt_*\widehat{\mathcal{O}}_{\mathbb{P}(E _L)}(-\Phi)_x\xrightarrow{\sim}\lim_{\substack{\leftarrow \\ k}} H^n(F_x, \mathcal{O}_{\mathbb{P}(E _L)}(-\Phi)\otimes\mathcal{O}_{\mathbb{P}(E _L)}/{\is{F_x}^k}).
\end{equation*}
For $k=1$,
\begin{eqnarray*}
    &&H^n(F_x, \mathcal{O}_{\mathbb{P}(E _L)}(-\Phi)\otimes\mathcal{O}_{\mathbb{P}(E _L)}/{\is{F_x}})\\
    &\iso & H^n(F_X, b^*_xL(-2E_x))\\
    &\iso & H^n(X, L)\\
    &\neq & 0.
\end{eqnarray*}
Because dim$F_x=n$,
for each $k>0$ the natural map
\begin{equation*}
    H^n(F_x, \mathcal{O}_{\mathbb{P}(E _L)}(-\Phi)\otimes\mathcal{O}_{\mathbb{P}(E _L)}/{\is{F_x}^{k+1}})\rightarrow H^n(F_x, \mathcal{O}_{\mathbb{P}(E _L)}(-\Phi)\otimes\mathcal{O}_{\mathbb{P}(E _L)}/{\is{F_x}^k})
\end{equation*}
is surjective. Therefore any nonzero element in $H^n(F_x, \mathcal{O}_{\mathbb{P}(E _L)}(-\Phi)\otimes\mathcal{O}_{\mathbb{P}(E _L)}/{\is{F_x}})$ lifts to a nonzero element in
\begin{equation*}
    \lim_{\substack{\leftarrow \\ k}} H^n(F_x, \mathcal{O}_{\mathbb{P}(E _L)}(-\Phi)\otimes\mathcal{O}_{\mathbb{P}(E _L)}/{\is{F_x}^k}).
\end{equation*}
\end{proof}

\begin{example}
Let $X$ be a smooth projective curve with Clifford index\\ $\text{Cliff}(X)\ge 3$. According to \cite[Corollary B]{Ullery14}, $\Sigma(X, \omega_X)$ is normal. On the other hand, $H^1(X, \omega_X)\neq 0$, so Theorem \ref{iff} and Proposition \ref{nonvanishing of higher direct image} imply that $\Sigma(X, \omega_X)$ is not DB.
\end{example}

\begin{example}
Again let $X\subset \mathbb{P}^{n+1}$ be a smooth hypersurface of degree $d\ge n+2+k$ and $L=\sshf{X}(k)$ with $k\ge 3$. Then
\begin{equation*}
    H^n(X, L)\iso H^{n+1}(\prj{n+1}, \sshf{\prj{n+1}}(k-d))\neq 0,
\end{equation*}
which shows that $\Sigma(X, L)$ is not DB. If in addition $k\ge \frac{d+3}{2}$, we have $\Sigma(X, L)$ is normal by Example \ref{normality under weak assumption}. These conditions can be achieved simultaneously for any fixed $n$ by, for instance, taking an even number $d\ge 2(n+4)$ and $k=\frac{d+4}{2}$.
\end{example}

\section{On Cohen-Macaulayness}
In this section, we calculate the depth of local rings of $\Sigma$ which measures how far $\Sigma$ is from being Cohen-Macaulay, and also study the sheaf $t_*\omega_{\mathbb{P}(E_L)}$ using the language of derived category.

Since we are dealing with the projective case, a basic lemma for our purpose is
\begin{lemma}(\cite[Lemma 2.3]{AH12})
Let $X$ be a projective scheme over an algebraically closed field $K$ of pure dimension $n$ with an ample divisor $D$. Let $\shf{F}$ be a coherent sheaf on $X$ such that support of $\shf{F}$ has pure dimension $n$. Then
$\text{depth}_x\shf{F}\ge k$ for all closed point $x\in X$ if and only if $H^i(X, \shf{F}(-rD))=0$ for all $i<k$ and $r\gg 0$.
\end{lemma}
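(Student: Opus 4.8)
The plan is to pass through Grothendieck--Serre duality to convert the global vanishing condition into the vanishing of certain $\mathcal{E}xt$-sheaves, and then to use local duality (Theorem \ref{local duality}) to recognize those sheaves as the Matlis duals of the local cohomology modules that compute depth. Throughout, $\omega_X^\bullet$ denotes the dualizing complex of $X$, and I will use that its formation commutes with localization, so that $\mathcal{E}xt^{-i}(\shf{F}, \omega_X^\bullet)_x\iso \textrm{Ext}^{-i}_{\sshf{X,x}}(\shf{F}_x, \omega_{X,x}^\bullet)$ at every closed point $x$ (cf.~Lemma \ref{dualizing sheaf}). Note that $X$ is of finite type over a field, so such a dualizing complex exists.

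\emph{Local side.} For the local ring $(\sshf{X,x}, \mathfrak{m}_x)$ one has $\text{depth}_x\shf{F}=\min\{i : H^i_{\mathfrak{m}_x}(\shf{F}_x)\neq 0\}$, so $\text{depth}_x\shf{F}\ge k$ if and only if $H^i_{\mathfrak{m}_x}(\shf{F}_x)=0$ for all $i<k$. By local duality in the form of equation (\ref{equ}), $H^i_{\mathfrak{m}_x}(\shf{F}_x)$ is the Matlis dual of $\mathcal{E}xt^{-i}(\shf{F}, \omega_X^\bullet)_x$, and the Matlis dual of a finitely generated-supported module vanishes exactly when the module does. Since a coherent sheaf is zero if and only if all of its stalks at closed points vanish, I obtain the reformulation: $\text{depth}_x\shf{F}\ge k$ for all closed $x$ if and only if $\mathcal{E}xt^{-i}(\shf{F}, \omega_X^\bullet)=0$ for all $i<k$.

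\emph{Global side.} Grothendieck--Serre duality on the proper scheme $X$ gives, for each $r$, a natural isomorphism
\begin{equation*}
H^i(X, \shf{F}(-rD))^{\vee}\iso \mathbb{H}^{-i}\bigl(X, R\mathcal{H}om(\shf{F}, \omega_X^\bullet)(rD)\bigr),
\end{equation*}
using $R\mathcal{H}om(\shf{F}(-rD),\omega_X^\bullet)\iso R\mathcal{H}om(\shf{F},\omega_X^\bullet)(rD)$. Write $\mathcal{G}^\bullet=R\mathcal{H}om(\shf{F},\omega_X^\bullet)$, a bounded complex with coherent cohomology $\mathcal{E}xt^q(\shf{F},\omega_X^\bullet)$, of which only finitely many are nonzero. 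The hypercohomology spectral sequence $E_2^{p,q}=H^p(X,\mathcal{E}xt^q(\shf{F},\omega_X^\bullet)(rD))\Rightarrow \mathbb{H}^{p+q}(X,\mathcal{G}^\bullet(rD))$ then has finitely many rows, so Serre vanishing applied to each gives $E_2^{p,q}=0$ for all $p>0$ once $r\gg0$; the spectral sequence degenerates and
\begin{equation*}
H^i(X, \shf{F}(-rD))^{\vee}\iso H^0\bigl(X, \mathcal{E}xt^{-i}(\shf{F},\omega_X^\bullet)(rD)\bigr),\qquad r\gg0.
\end{equation*}
A coherent sheaf $\mathcal{E}$ is zero if and only if $H^0(X,\mathcal{E}(rD))=0$ for $r\gg0$, since a nonzero $\mathcal{E}$ becomes globally generated with nonzero sections after a large twist by the ample $D$. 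Hence $H^i(X,\shf{F}(-rD))=0$ for $r\gg0$ if and only if $\mathcal{E}xt^{-i}(\shf{F},\omega_X^\bullet)=0$. Combining with the local side, and choosing $r$ past the finitely many thresholds attached to $i=0,\dots,k-1$, yields the equivalence.

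The main obstacle is purely the bookkeeping of the two dualities: aligning the cohomological index $i$ in $H^i_{\mathfrak{m}_x}$ and in $H^i(X,-)$ with the degree $-i$ of the $\mathcal{E}xt$-sheaves under both local and global duality, and justifying that the hypercohomology of $\mathcal{G}^\bullet(rD)$ collapses to global sections of a single cohomology sheaf once $r\gg0$. This collapse is exactly where ampleness of $D$ enters, uniformly over the finitely many cohomology sheaves of $\mathcal{G}^\bullet$. I expect the purity hypotheses on $X$ and on $\text{supp}\,\shf{F}$ to be inessential to the equivalence itself, serving only the later application.
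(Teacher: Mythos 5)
The paper offers no proof of this statement: it is imported wholesale from \cite[Lemma 2.3]{AH12}, so there is no in-paper argument to measure yours against. That said, your proof is correct, and it is the standard derivation of this criterion. The two halves fit together exactly as you describe: global duality plus Serre vanishing applied to the finitely many coherent cohomology sheaves of $R\mathcal{H}om(\shf{F},\omega_X^{\bullet})$ collapses $H^i(X,\shf{F}(-rD))^{\vee}$ to $H^0\bigl(X,\mathcal{E}xt^{-i}(\shf{F},\omega_X^{\bullet})(rD)\bigr)$ for $r\gg 0$, and vanishing of these sections for $r\gg 0$ detects vanishing of the coherent sheaf $\mathcal{E}xt^{-i}(\shf{F},\omega_X^{\bullet})$; local duality in the form of (\ref{equ}), together with the faithfulness of Matlis duality and the characterization of depth as the least nonvanishing local cohomology, identifies that vanishing with $\text{depth}_x\shf{F}\ge k$ at every closed point. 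The two hypotheses you lean on implicitly are both available: the stalk at a closed point of the globally normalized dualizing complex is the correctly normalized local dualizing complex (this is the compatibility recorded in Lemma \ref{dualizing sheaf} and \cite[V.6]{Har66}, which makes the index $-i$ match on both sides), and closed points suffice to test vanishing of a coherent sheaf because $X$ is of finite type over a field. Since only the finitely many indices $i=0,\dots,k-1$ occur, the uniform choice of $r$ is legitimate, as you note, and your observation that the purity hypotheses are not needed for the equivalence itself is also accurate.
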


With the lemma above, we will prove
\begin{theorem}\label{CM}
If $L$ is defined as Assumption \ref{assumption}, then
\begin{equation*}
    \text{depth}_x\Sigma(X, L)=n+2+\max\{i \:|\: i\le n-1, \text{and}\; H^j(X, \sshf{X})=0,  1\le j\le i\}.
\end{equation*}
Here we adopt the convention that if the set $\{i \:|\: i\le n-1, \text{and}\; H^j(X, \sshf{X})=0,  1\le j\le i\}$ is empty, then the max is 0.
\end{theorem}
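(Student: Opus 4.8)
The plan is to compute, for $r\gg0$, the least index $i$ for which $H^i(\Sigma,\mathcal{O}_\Sigma(-rH))\neq0$, where $H$ is the hyperplane class: by the depth criterion \cite[Lemma 2.3]{AH12} this least index equals $\min_x\operatorname{depth}_x\mathcal{O}_\Sigma$, and since $\Sigma$ is smooth of dimension $2n+1$ away from $X$, the minimum is attained along $X$ (the computation being uniform in $x$, it returns the stated value at every closed $x\in X$). First I would feed the resolution $t:\mathbb{P}(E_L)\to\Sigma$ into the Leray spectral sequence. Writing $\xi=t^*\mathcal{O}_\Sigma(H)$ and using the projection formula together with $t_*\mathcal{O}_{\mathbb{P}(E_L)}=\mathcal{O}_\Sigma$ and $R^qt_*\mathcal{O}_{\mathbb{P}(E_L)}\cong H^q(X,\mathcal{O}_X)\otimes\mathcal{O}_X$ for $q>0$ (which follows from Proposition \ref{vanishing of higher direct images}, the sequence $\ses{\mathcal{O}_{\mathbb{P}(E_L)}(-\Phi)}{\mathcal{O}_{\mathbb{P}(E_L)}}{\mathcal{O}_\Phi}$, and Lemma \ref{higher direct images of structure sheaf}), the $E_2$-page has bottom row $E_2^{p,0}=H^p(\Sigma,\mathcal{O}_\Sigma(-rH))$, while all rows $q>0$ are concentrated in the single column $p=n$, because for $r\gg0$ Serre duality on $X$ gives $H^p(X,L^{-r})=0$ for $p\neq n$. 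The abutment is $H^\bullet(\mathbb{P}(E_L),\xi^{-r})$.

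The decisive vanishing is that this abutment is concentrated in top degree. Since $\xi=t^*\mathcal{O}_\Sigma(H)$ is nef and big, Kawamata--Viehweg vanishing gives $H^j(\mathbb{P}(E_L),\omega_{\mathbb{P}(E_L)}\otimes\xi^r)=0$ for $j>0$, hence by Serre duality $H^m(\mathbb{P}(E_L),\xi^{-r})=0$ for all $m<2n+1$. As the only nonzero $E_2$-terms lie in the bottom row and in the column $p=n$, a term $E_2^{p,0}$ with $p\le n+1$ supports neither an incoming nor an outgoing differential, so it survives to $E_\infty$; since the abutment vanishes in degrees $<2n+1$, this forces $H^p(\Sigma,\mathcal{O}_\Sigma(-rH))=0$ for $p\le n+1$. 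This establishes the baseline bound $\operatorname{depth}_x\Sigma\ge n+2$.

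For $p\ge n+2$ I would compute the groups directly. From $\ses{\is{X/\Sigma}}{\mathcal{O}_\Sigma}{\mathcal{O}_X}$ twisted by $-rH$ and the vanishing $H^{p-1}(X,L^{-r})=H^p(X,L^{-r})=0$ (as $p-1,p\neq n$), one gets $H^p(\Sigma,\mathcal{O}_\Sigma(-rH))\cong H^p(\Sigma,\is{X/\Sigma}(-rH))$. Using $\mathcal{R}t_*\mathcal{O}_{\mathbb{P}(E_L)}(-\Phi)\cong\is{X/\Sigma}$ (established in the proof of Theorem \ref{Du Bois}) and the projection formula, this equals $H^p(\mathbb{P}(E_L),\mathcal{O}_{\mathbb{P}(E_L)}(-\Phi)\otimes\xi^{-r})$. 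Twisting $\ses{\mathcal{O}_{\mathbb{P}(E_L)}(-\Phi)}{\mathcal{O}_{\mathbb{P}(E_L)}}{\mathcal{O}_\Phi}$ by $\xi^{-r}$ and invoking $H^{p-1}(\mathbb{P}(E_L),\xi^{-r})=H^p(\mathbb{P}(E_L),\xi^{-r})=0$ for $p\le 2n$, I obtain $H^p(\Sigma,\is{X/\Sigma}(-rH))\cong H^{p-1}(\Phi,\xi^{-r}|_\Phi)$. Since $t|_\Phi$ factors as $\Phi\xrightarrow{q}X\hookrightarrow\Sigma$, one has $\xi|_\Phi=q^*L$, so Lemma \ref{higher direct images of structure sheaf} and the degeneration of the Leray sequence for $q$ (whose only nonzero base cohomology sits in degree $n$ for $r\gg0$) give
\begin{equation*}
H^{p-1}(\Phi,q^*L^{-r})\cong H^n(X,L^{-r})\otimes H^{p-1-n}(X,\mathcal{O}_X),
\end{equation*}
nonzero exactly when $H^{p-1-n}(X,\mathcal{O}_X)\neq0$; and for $p=2n+1$ one has directly $H^{2n+1}(\Sigma,\mathcal{O}_\Sigma(-rH))\cong H^0(\Sigma,\omega_\Sigma(rH))^*\neq0$ by Serre--Grothendieck duality.

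Assembling, the least $i\ge n+2$ with $H^i(\Sigma,\mathcal{O}_\Sigma(-rH))\neq0$ is $n+1+l^*$, where $l^*=\min\{l\ge1:H^l(X,\mathcal{O}_X)\neq0\}$ when some such $l$ is $\le n-1$, and is $2n+1$ otherwise; identifying $l^*-1$ with the maximal length of the initial vanishing streak $H^1=\cdots=H^{l^*-1}=0$ yields exactly $\operatorname{depth}_x\Sigma=n+2+m$ with $m$ as in the statement. I expect the main obstacle to be the baseline $\operatorname{depth}_x\Sigma\ge n+2$: the contributions of the contracted fibers ($R^{>0}t_*\mathcal{O}_{\mathbb{P}(E_L)}$, supported on $X$) interact with the main term, and the clean way to neutralize this interaction is precisely the concentration of $H^\bullet(\mathbb{P}(E_L),\xi^{-r})$ in top degree coming from the nef-and-big vanishing. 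An equivalent but more delicate route would analyze the connecting maps in the dualizing complex $\omega^\bullet_\Sigma$ in degrees $n$ and $n+1$ directly, which I would avoid; the remaining work is bookkeeping of indices and checking the single-column degeneration of the two Leray sequences.
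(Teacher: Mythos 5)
Your proposal is correct, and the bulk of it coincides with the paper's own proof: both reduce the depth computation to the groups $H^i(\Sigma,\sshf{\Sigma}(-r))$ for $r\gg 0$ via \cite[Lemma 2.3]{AH12}, and both rely on the same three inputs --- Kawamata--Viehweg vanishing $H^i(\mathbb{P}(E_L),\xi^{-r})=0$ for $i<2n+1$, the identification $\mathcal{R}t_*\sshf{\mathbb{P}(E_L)}(-\Phi)\iso\is{X/\Sigma}$ giving $H^i(\Sigma,\is{X/\Sigma}(-r))\iso H^{i-1}(\Phi,q^*L^{-r})$ (the paper's Lemmas \ref{lemma2} and the twisted sequence (\ref{ses})), and the Leray/Serre-vanishing computation $H^{i}(\Phi,q^*L^{-r})\iso H^n(X,L^{-r})\otimes H^{i-n}(X,\sshf{X})$ (Lemma \ref{lemma3}). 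The one genuine divergence is at the critical degrees $i=n,n+1$, which the paper singles out as the delicate step: it proves $H^n(\sshf{\Sigma}(-r))=H^{n+1}(\sshf{\Sigma}(-r))=0$ by a diagram chase showing the connecting map $\alpha: H^n(\sshf{X}(-r))\to H^{n+1}(\Sigma,\is{X/\Sigma}(-r))$ is an isomorphism, ultimately reduced to $H^n(\sshf{X}(-r))\xrightarrow{\sim}H^n(\sshf{\Phi}(-r))$. You instead run the Leray spectral sequence for $t$ itself, note that the rows $q>0$ are concentrated in the single column $p=n$, and conclude that the bottom-row terms $E_2^{p,0}$ with $p\le n+1$ admit no differentials and hence must vanish because the abutment does. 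This positional argument is clean and correctly avoids identifying any map, at the price of having to abandon the spectral sequence for $p\ge n+2$ (where the differential $d_{p-n}:E_{p-n}^{n,p-n-1}\to E_{p-n}^{p,0}$ genuinely intervenes) and redo those degrees by the direct computation --- which is exactly what the paper does throughout. One shared loose end, which you at least flag: the criterion of \cite[Lemma 2.3]{AH12} only controls $\min_x\text{depth}_x$, so strictly speaking both arguments pin down the minimum of the depth over closed points of $X$ rather than its value at each such point; upgrading to the pointwise statement requires noting that the local computations (e.g.\ via local cohomology or formal functions at $x$) are uniform in $x$, and the paper is no more explicit about this than you are.
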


\begin{corollary}
If $X$ is a curve , then $\Sigma(X,L)$ is Cohen-Macaulay.\qed
\end{corollary}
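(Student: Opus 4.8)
The plan is to specialize the depth formula of Theorem~\ref{CM} to the case $n=1$ and then compare the resulting depth with the dimension of $\Sigma$. First I would set $n=1$ in the formula. The index set
$\{i : i\le n-1,\ H^j(X,\sshf{X})=0,\ 1\le j\le i\}$ then becomes $\{i : i\le 0,\ \dots\}$, which contains no positive integer satisfying the vanishing condition; by the convention stated in Theorem~\ref{CM} the maximum is therefore $0$. Hence Theorem~\ref{CM} yields
\begin{equation*}
\text{depth}_x\Sigma(X,L)=n+2+0=3
\end{equation*}
for every closed point $x\in X$.

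Next I would invoke the two structural facts recorded after Theorem~\ref{CM1}: that $\dim\Sigma(X,L)=2n+1=3$ and that $\Sigma(X,L)\setminus X$ is smooth (both valid here since, for a curve with $\deg L\ge 2g+3$, $L$ is $3$-very ample and Assumption~\ref{assumption} holds). Away from $X$ the variety is smooth, hence Cohen--Macaulay automatically, so the only points at which Cohen--Macaulayness could fail are those of $X$. But at each such point the depth computation above gives $\text{depth}_x\Sigma=3=\dim\Sigma$, which is precisely the Cohen--Macaulay condition. Combining the two cases shows that $\Sigma(X,L)$ is Cohen--Macaulay everywhere.

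I do not expect any real obstacle in this argument: the corollary is an immediate numerical consequence of Theorem~\ref{CM}. The only point that requires a moment of care is the correct reading of the convention governing the index set when $n=1$, where the condition $i\le n-1$ forces $i\le 0$ and thus collapses the $\max$ to $0$; once this is observed, the equality of depth and dimension along $X$ is automatic and the conclusion follows.
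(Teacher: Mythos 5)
Your proof is correct and is exactly the argument the paper intends (the corollary is stated with an immediate \qed as a direct consequence of Theorem~\ref{CM}): for $n=1$ the $\max$ term is $0$, so the depth along $X$ is $3=2n+1=\dim\Sigma$, and off $X$ the secant variety is smooth. The only cosmetic remark is that for $n=1$ the index set arguably contains $i=0$ (the vanishing condition being vacuous) rather than being empty, but either reading gives $\max=0$, so nothing changes.
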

\begin{remark}
Sidman and Vermeire \cite{SV09} proved a stronger result that for a curve of genus $g$ embedded as linear normal curve by a line bundle of degree $d\ge 2g+3$, $\Sigma$ is arithmetically Cohen-Macaulay.
\end{remark}

\begin{corollary}
If $X$ can be embedded in a projective space as a scheme-theoretic complete intersection, then for sufficiently ample $L$, $\Sigma(X, L)$ is CM.
\end{corollary}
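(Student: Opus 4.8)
The plan is to reduce to an intrinsic cohomological vanishing on $X$ and then verify it for complete intersections. By Theorem \ref{CM}, together with the facts that $\dim\Sigma(X,L)=2n+1$ and that $\Sigma(X,L)$ is smooth away from $X$, the variety $\Sigma(X,L)$ is Cohen-Macaulay if and only if the maximum appearing in the depth formula equals $n-1$ at every point of $X$; equivalently, if and only if $H^i(X,\sshf{X})=0$ for all $1\le i\le n-1$. The decisive point is that this condition involves only the cohomology of $\sshf{X}$ and is independent of the embedding line bundle $L$. Hence for any sufficiently ample $L$---in particular any $L$ of the form in Assumption \ref{assumption}, which always exists on $X$---it suffices to prove $H^i(X,\sshf{X})=0$ for $1\le i\le n-1$.

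To establish this vanishing I would use the complete intersection hypothesis. Write $X=V(f_1,\dots,f_c)\subset\prj{m}$ with $m=n+c$ and $\deg f_i=d_i$, so that the Koszul complex on $f_1,\dots,f_c$ resolves $\sshf{X}$ by direct sums of line bundles on $\prj{m}$:
$$0\to\sshf{\prj{m}}\Big(-\textstyle\sum_i d_i\Big)\to\cdots\to\bigoplus_i\sshf{\prj{m}}(-d_i)\to\sshf{\prj{m}}\to\sshf{X}\to 0.$$
Viewing the left portion as a complex $C^\bullet$ in degrees $-c\le p\le 0$ quasi-isomorphic to $\sshf{X}$, I would run the hypercohomology spectral sequence $E_1^{p,q}=H^q(\prj{m},C^p)\Rightarrow H^{p+q}(X,\sshf{X})$ and invoke the vanishing of intermediate cohomology on projective space, namely $H^q(\prj{m},\sshf{\prj{m}}(k))=0$ for $0<q<m$ and all $k$. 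Thus $E_1^{p,q}=0$ unless $q=0$ or $q=m$.

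It then remains to check the indices on the antidiagonal $p+q=i$ with $0<i<n$. The row $q=0$ would force $p=i>0$, which violates $p\le 0$; the row $q=m$ would force $p=i-m<n-m=-c$, which violates $p\ge -c$. Therefore $E_1^{p,q}=0$ all along $p+q=i$, whence $H^i(X,\sshf{X})=0$ for every $0<i<n$, which is exactly the required range $1\le i\le n-1$. Combined with the first paragraph, this shows $\Sigma(X,L)$ is Cohen-Macaulay. I do not expect a genuine obstacle here: the real content resides in Theorem \ref{CM}, and the remaining input is the classical absence of intermediate cohomology for complete intersections; the only care required is the index bookkeeping above and the observation that for $n=1$ the vanishing range is empty, so the conclusion is unconditional and recovers the earlier corollary for curves.
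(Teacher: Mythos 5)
Your proposal is correct and follows the same route as the paper: reduce via Theorem \ref{CM} to the vanishing $H^i(X,\sshf{X})=0$ for $1\le i\le n-1$, which holds for complete intersections. The only difference is that the paper simply cites this classical vanishing, whereas you supply its proof via the Koszul resolution and the hypercohomology spectral sequence; your index bookkeeping there is accurate.
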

\begin{proof}
Since $X$ is a complete intersection, $H^i(X, \sshf{X})=0$ for $1\le i\le n-1$. Theorem \ref{CM} implies $\text{depth}_x\Sigma(X, L)$=2n+1.
\end{proof}

\begin{lemma}\label{lemma2}
For all integers $i, r$,
\begin{equation*}
   H^i\paren{\Sigma, \is{X/{\Sigma}}(-r)}\iso H^i\paren{\mathbb{P}(E_{L}), \sshf{\mathbb{P}(E_{L})}(-r)\otimes\sshf{\mathbb{P}(E_{L})}(-\Phi)}.
\end{equation*}
\end{lemma}
\begin{proof}
Since $R^jt_*(\sshf{\mathbb{P}(E_{L})}(-r)\otimes\sshf{\mathbb{P}(E_{L})}(-\Phi))\iso \sshf{\Sigma}(-r)\otimes R^it_*\sshf{\mathbb{P}(E_{L})}(-\Phi)=0$ for $j>0$, by Proposition \ref{vanishing of higher direct images}, we have
\begin{equation*}
   H^i\paren{\mathbb{P}(E_{L}), \sshf{\mathbb{P}(E_{L})}(-r)\otimes\sshf{\mathbb{P}(E_{L})}(-\Phi)} \iso  H^i\paren{\Sigma, t_*\sshf{\mathbb{P}(E_{L})}(-\Phi)\otimes\sshf{\Sigma}(-r)}.
  \end{equation*}
To finish the proof, we use the fact $t_*\sshf{\mathbb{P}(E_{L})}(-\Phi)\iso \is{X/{\Sigma}}$, as shown in the proof of Theorem \ref{Du Bois}.
\end{proof}

\begin{lemma}\label{lemma3}
Let $r\gg 0$. Then
\begin{equation*}
    H^i(\Phi, \sshf{\Phi}(-r))=\left\{\begin{array}{ll}
    0 & \text{if}\quad i<n, \\
    H^n(X, L^{-r})\otimes H^{i-n}(X, \sshf{X}) & \text{if}\quad n\le i\le 2n.
    \end{array}\right.
\end{equation*}
\end{lemma}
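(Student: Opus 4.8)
The plan is to reduce everything to cohomology on $X$ through the smooth projection $q:\Phi\rightarrow X$ of Lemma \ref{flat}, exploiting that $\sshf{\Phi}(-r)$ is pulled back from $X$ and that $q$ has the simple higher direct images recorded in Lemma \ref{higher direct images of structure sheaf}.

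First I would pin down the line bundle $\sshf{\Phi}(-r)$. Since $\Phi$ is the exceptional divisor of $t$ and $\sshf{\mathbb{P}(E_L)}(1)=t^*\sshf{\Sigma}(1)$, restricting to $\Phi$ and using that $t|_\Phi$ factors as $\Phi\xrightarrow{q}X\hookrightarrow\Sigma$ (the right-hand square of \eqref{Geometry of secant varieties}), together with $\sshf{\Sigma}(1)|_X=L$, gives $\sshf{\Phi}(1)\iso q^*L$, hence $\sshf{\Phi}(-r)\iso q^*L^{-r}$.

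Next, by the projection formula and Lemma \ref{higher direct images of structure sheaf},
\begin{equation*}
R^j q_*\paren{q^*L^{-r}}\iso R^j q_*\sshf{\Phi}\otimes L^{-r}\iso H^j(X,\sshf{X})\otimes L^{-r},
\end{equation*}
which is nonzero only for $0\le j\le n$. The Leray spectral sequence for $q$ then reads
\begin{equation*}
E_2^{p,j}=H^p\paren{X,H^j(X,\sshf{X})\otimes L^{-r}}\iso H^j(X,\sshf{X})\otimes H^p(X,L^{-r})\Longrightarrow H^{p+j}\paren{\Phi,\sshf{\Phi}(-r)}.
\end{equation*}
The decisive point is Serre vanishing: for $r\gg 0$, Serre duality gives $H^p(X,L^{-r})\iso H^{n-p}(X,\omega_X\otimes L^r)^{*}$, and since $\omega_X\otimes L^r$ is ample for $r\gg 0$ this vanishes unless $p=n$. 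Thus the $E_2$ page is concentrated in the single column $p=n$, the spectral sequence degenerates, and
\begin{equation*}
H^i\paren{\Phi,\sshf{\Phi}(-r)}\iso E_2^{n,i-n}\iso H^n(X,L^{-r})\otimes H^{i-n}(X,\sshf{X}).
\end{equation*}
For $i<n$ no term survives (every nonzero $E_2$ term has $p+j\ge n$), while for $n\le i\le 2n$ this is the asserted value, and it vanishes for $i>2n$ since then $H^{i-n}(X,\sshf{X})=0$.

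I do not expect a genuine obstacle. The two places needing care are the identification $\sshf{\Phi}(-r)\iso q^*L^{-r}$ and the observation that a single nonzero column collapses the spectral sequence; the remaining input is just the projection formula and Serre vanishing on $X$.
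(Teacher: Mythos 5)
Your proposal is correct and follows essentially the same route as the paper: the Leray spectral sequence for $q$, the computation $R^jq_*\sshf{\Phi}\iso H^j(X,\sshf{X})\otimes\sshf{X}$ from Lemma \ref{higher direct images of structure sheaf}, and Serre vanishing for $r\gg 0$ to concentrate the $E_2$ page in the column $p=n$. The only difference is that you spell out the identification $\sshf{\Phi}(-r)\iso q^*L^{-r}$ and the Serre-duality justification of the vanishing, both of which the paper leaves implicit.
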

\begin{proof}
Since $L$ is ample, we have that if $b<n$ and $r\gg 0$, $H^{b}(X, R^aq_*\sshf{\Phi}\otimes L^{-r})=0$ by Serre vanishing. It follows that the Leray spectral sequence
\begin{equation*}
    E^{a, b}_2=H^b(X, R^aq_*\sshf{\Phi}\otimes L^{-r})\Rightarrow H^{a+b}\paren{\Phi, \sshf{\Phi}(-r)}
\end{equation*}
degenerates at the level $E_2$. We see that if $i<n$, $H^i(\Phi, \sshf{\Phi}(-r))=0$ for $r\gg 0$. For $n\le i\le 2n$,
\begin{equation*}
   H^i(\Phi, \sshf{\Phi}(-r))\iso H^n(X, R^{i-n}q_*\sshf{\Phi}\otimes L^{-r})\iso H^n(X, L^{-r})\otimes H^{i-n}(X, \sshf{X}),
\end{equation*}
where the last isomorphism is by Lemma (\ref{higher direct images of structure sheaf}).
\end{proof}

\begin{proof}[Proof of Theorem \ref{CM}]
Since $\sshf{\mathbb{P}(E_{L})}(1)=t^*\sshf{\Sigma}(1)$, the tautological line bundle $\sshf{\mathbb{P}(E_{L})}(1)$ is big and nef. Therefore by Kawamata-Viehweg vanishing,
\begin{equation}\label{KV vanishing}
   H^i(\sshf{\mathbb{P}(E_{L})}(-r))=0 \quad\text{for all $i<2n+1$ and $r>0$}.
\end{equation}

Consider the exact sequence
\begin{equation}\label{ses}
    \ses{\sshf{\mathbb{P}(E_{L})}(-r)\otimes \sshf{\mathbb{P}(E_{L})}(-\Phi)}{\sshf{\mathbb{P}(E_{L})}(-r)}{\sshf{\Phi}(-r)},
\end{equation}
where $\sshf{\Phi}(1)=q^*L$. We obtain that for $i<2n+1$,
\begin{eqnarray*}
    &&H^i(\Sigma, \is{X/{\Sigma}}(-r))\\
    &\iso & H^i\paren{\sshf{\mathbb{P}(E_{L})}(-r)\otimes\sshf{\mathbb{P}(E_{L})}(-\Phi)} \quad\text{by Lemma}\:\ref{lemma2}\\
    &\iso & H^{i-1}(\sshf{\Phi}(-r)) \hspace{3.0cm}\text{by}\:(\ref{KV vanishing}),(\ref{ses})\: (H^{-1}=0)\\
    &\iso & \left\{\begin{array}{ll}
    0 & \text{if}\quad i\le n, \\
    H^n(X, L^{-r})\otimes H^{i-1-n}(X, \sshf{X}) & \text{if}\quad n<i<2n+1,
    \end{array}\right.
\end{eqnarray*}
where the last isomorphism is by Lemma \ref{lemma3}.

Then from the exact sequence
\begin{equation*}
    \ses{\is{X/{\Sigma}}(-r)}{\sshf{\Sigma}(-r)}{\sshf{X}(-r)},
\end{equation*}
and applying Kodaira vanishing to $\sshf{X}(-r)$, we see
\begin{equation*}
    H^i(\sshf{\Sigma}(-r))=\left\{\begin{array}{ll}
    0 & \text{if}\quad i<n, \\
    H^n(X, L^{-r})\otimes H^{i-1-n}(X, \sshf{X}) & \text{if}\quad n+1<i<2n+1.
    \end{array}\right.
\end{equation*}
So to finish the proof, we shall show that $H^n(\sshf{\Sigma}(-r))=H^{n+1}(\sshf{\Sigma}(-r))=0$. In view of above results, both groups sit in the exact sequence
\begin{equation*}
    0\rightarrow H^n(\sshf{\Sigma}(-r))\rightarrow H^n(\sshf{X}(-r))\xrightarrow{\alpha} H^{n+1}(\Sigma, \is{X/{\Sigma}}(-r))\rightarrow H^{n+1}(\sshf{\Sigma}(-r))\rightarrow 0,
\end{equation*}
and hence we need to prove $\alpha$ is an isomorphism.

To this end, consider the natural commutative diagram
$$\xymatrix{
H^n(\sshf{\Phi}(-r))  \ar[r] & H^{n+1}\paren{\sshf{\mathbb{P}(E_{L})}(-r)\otimes\sshf{\mathbb{P}(E_{L})}(-\Phi)}\\
H^n(\sshf{X}(-r))\ar[u]\ar[r]^{\alpha} & H^{n+1}\paren{\Sigma, \is{X/{\Sigma}}(-r)}\ar[u] }.$$

Since the top and right column maps are isomorphisms, it is reduced to show that the natural map
$H^n(\sshf{X}(-r))\rightarrow H^n(\sshf{\Phi}(-r))$ is an isomorphism. Again, this is the case by the Leray spectral sequence and Serre vanishing.
\end{proof}

\begin{theorem}\label{pushforward of dualizing sheaf}
 $t_*\omega _{\mathbb{P}(E_{L})}\iso \omega_{\Sigma}$ if and only if $H^n(X, \mathcal{O}_X)=0$.
\end{theorem}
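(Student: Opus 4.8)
The plan is to compare the two sheaves through the natural trace map and then reduce everything to a top-degree cohomology computation of the sort already carried out in the proof of Theorem \ref{CM}. First I would record the trace map $c\colon t_*\omega_{\mathbb{P}(E_L)}\to\omega_\Sigma$. Since $t$ is an isomorphism over the dense open set $\Sigma\setminus X$ and $t_*\omega_{\mathbb{P}(E_L)}$ is torsion free, $c$ is injective with cokernel $\mathcal{Q}$ supported on $X$; hence $c$ is an isomorphism iff $\mathcal{Q}=0$, and since $\mathcal{Q}$ is coherent this holds iff $H^0(\Sigma,\mathcal{Q}(r))=0$ for all $r\gg0$. Twisting the sequence $\ses{t_*\omega_{\mathbb{P}(E_L)}}{\omega_\Sigma}{\mathcal{Q}}$ and using Serre vanishing $H^1(\Sigma,t_*\omega_{\mathbb{P}(E_L)}(r))=0$ for $r\gg0$, it suffices to compare $\dim H^0(\Sigma,\omega_\Sigma(r))$ with $\dim H^0(\Sigma,t_*\omega_{\mathbb{P}(E_L)}(r))$ for $r\gg0$.

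Next I would translate these two spaces into top cohomology by duality. On the smooth variety $\mathbb{P}(E_L)$, using $\sshf{\mathbb{P}(E_L)}(1)=t^*\sshf{\Sigma}(1)$ and the projection formula, $H^0(\Sigma,t_*\omega_{\mathbb{P}(E_L)}(r))\iso H^0(\mathbb{P}(E_L),\omega_{\mathbb{P}(E_L)}(r))\iso H^{2n+1}(\mathbb{P}(E_L),\sshf{\mathbb{P}(E_L)}(-r))^*$. On $\Sigma$, the top-degree case of duality gives $H^0(\Sigma,\omega_\Sigma(r))\iso H^{2n+1}(\Sigma,\sshf{\Sigma}(-r))^*$, where $\omega_\Sigma=h^{-(2n+1)}(\omega^\bullet_\Sigma)$; this form requires no Cohen-Macaulay hypothesis. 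Thus the whole statement reduces to producing, for $r\gg0$, the short exact sequence
\begin{equation*}
0\to H^n(X,L^{-r})\otimes H^n(X,\sshf{X})\to H^{2n+1}(\Sigma,\sshf{\Sigma}(-r))\to H^{2n+1}(\mathbb{P}(E_L),\sshf{\mathbb{P}(E_L)}(-r))\to 0.
\end{equation*}

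To obtain this I would run the degree-$2n+1$ part of the argument in Theorem \ref{CM}. From $\ses{\is{X/{\Sigma}}(-r)}{\sshf{\Sigma}(-r)}{\sshf{X}(-r)}$ and $H^{2n}(X,\sshf{X}(-r))=H^{2n+1}(X,\sshf{X}(-r))=0$ (dimension reasons), one gets $H^{2n+1}(\Sigma,\sshf{\Sigma}(-r))\iso H^{2n+1}(\Sigma,\is{X/{\Sigma}}(-r))$, which by Lemma \ref{lemma2} equals $H^{2n+1}(\mathbb{P}(E_L),\sshf{\mathbb{P}(E_L)}(-r)\otimes\sshf{\mathbb{P}(E_L)}(-\Phi))$. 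Feeding the sequence (\ref{ses}) into its long exact sequence, together with the Kawamata-Viehweg vanishing (\ref{KV vanishing}) giving $H^{2n}(\mathbb{P}(E_L),\sshf{\mathbb{P}(E_L)}(-r))=0$ and with $H^{2n+1}(\Phi,\sshf{\Phi}(-r))=0$ since $\dim\Phi=2n$, yields
\begin{equation*}
0\to H^{2n}(\Phi,\sshf{\Phi}(-r))\to H^{2n+1}(\mathbb{P}(E_L),\sshf{\mathbb{P}(E_L)}(-r)\otimes\sshf{\mathbb{P}(E_L)}(-\Phi))\to H^{2n+1}(\mathbb{P}(E_L),\sshf{\mathbb{P}(E_L)}(-r))\to 0,
\end{equation*}
and Lemma \ref{lemma3} identifies $H^{2n}(\Phi,\sshf{\Phi}(-r))\iso H^n(X,L^{-r})\otimes H^n(X,\sshf{X})$. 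Splicing the two displays produces the desired short exact sequence. Dualizing it, and using Serre duality $H^n(X,L^{-r})^*\iso H^0(X,\omega_X\otimes L^r)$ on $X$, shows the cokernel of $H^0(t_*\omega_{\mathbb{P}(E_L)}(r))\hookrightarrow H^0(\omega_\Sigma(r))$ is $H^0(X,\omega_X\otimes L^r)\otimes H^n(X,\sshf{X})^*$. Because $L$ is ample, $H^0(X,\omega_X\otimes L^r)\neq0$ for $r\gg0$, so this cokernel vanishes for all large $r$ precisely when $H^n(X,\sshf{X})=0$, which by the first paragraph is equivalent to $t_*\omega_{\mathbb{P}(E_L)}\iso\omega_\Sigma$.

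I expect the delicate point to be the top-degree bookkeeping: in Theorem \ref{CM} the lower-degree computations are simplified by vanishing that is no longer available in degree $2n+1$, so one must track the two short exact sequences carefully there, and one must be sure the degenerate form of duality $H^{2n+1}(\Sigma,\mathcal{F})^*\iso\operatorname{Hom}(\mathcal{F},\omega_\Sigma)$ is legitimate without Cohen-Macaulayness. A cleaner but heavier alternative, which I would at least mention, is to invoke Grauert-Riemenschneider vanishing ($Rt_*\omega_{\mathbb{P}(E_L)}=t_*\omega_{\mathbb{P}(E_L)}$) and apply Grothendieck duality to $Rt_*\sshf{\mathbb{P}(E_L)}$; combined with $R^it_*\sshf{\mathbb{P}(E_L)}\iso H^i(X,\sshf{X})\otimes\sshf{X}$ for $i\ge1$ (from Proposition \ref{vanishing of higher direct images} and Lemma \ref{higher direct images of structure sheaf}) and $\mathcal{E}xt^{-n}_\Sigma(\sshf{X},\omega^\bullet_\Sigma)\iso\omega_X$, this yields directly the exact sequence of sheaves $0\to t_*\omega_{\mathbb{P}(E_L)}\to\omega_\Sigma\to H^n(X,\sshf{X})\otimes\omega_X\to0$, from which the equivalence is immediate.
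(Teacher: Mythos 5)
Your argument is correct, but it takes a genuinely different route from the paper's. The paper first establishes the structural fact $t_*\omega_{\mathbb{P}(E_{L})}(\Phi)\iso\omega_{\Sigma}$ (Claim \ref{claim}) by applying Grothendieck duality to $\mathcal{R}t_*\sshf{\mathbb{P}(E_{L})}(-\Phi)\iso\is{X/{\Sigma}}$ and then running local duality at a closed point of $X$ (injective hulls, Matlis duality, completions) to identify the relevant $\mathcal{E}xt$ sheaf with $\omega_{\Sigma_x}$; it then pushes forward $0\to\omega_{\mathbb{P}(E_L)}\to\omega_{\mathbb{P}(E_L)}(\Phi)\to i_*\omega_{\Phi}\to 0$ and reduces the theorem to the vanishing of $q_*\omega_{\Phi}$, computed fiberwise by Grauert's theorem as $H^0(F_x,\omega_{F_x})\iso H^0(X,\omega_X)$. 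You instead work globally: you compare $t_*\omega_{\mathbb{P}(E_L)}$ with $\omega_{\Sigma}$ through the injective trace map and measure the cokernel by a dimension count on $H^0$ of large twists, converting both sides into $H^{2n+1}$ of negative twists via Serre duality on the smooth $\mathbb{P}(E_L)$ and via the top-degree corner of Grothendieck--Serre duality on $\Sigma$ --- which, as you rightly flag, is the one delicate point, but is legitimate without Cohen--Macaulayness since the relevant hypercohomology only sees $h^{-(2n+1)}(\omega^{\bullet}_{\Sigma})=\omega_{\Sigma}$ --- and you then recycle Kawamata--Viehweg vanishing and Lemmas \ref{lemma2} and \ref{lemma3} from the proof of Theorem \ref{CM} to pin the discrepancy down to $h^n(X,L^{-r})\cdot h^n(X,\sshf{X})$, with $h^n(X,L^{-r})=h^0(X,\omega_X\otimes L^r)\neq 0$ for $r\gg 0$. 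Note that the pure dimension count already suffices here, so you need not identify the boundary maps when you ``dualize'' the short exact sequence. Both arguments ultimately rest on Proposition \ref{vanishing of higher direct images} (yours through Lemma \ref{lemma2}). Your route is more elementary --- no local cohomology or injective hulls --- and reuses machinery already built for Theorem \ref{CM}; the paper's route is purely local and yields the intermediate identification $t_*\omega_{\mathbb{P}(E_{L})}(\Phi)\iso\omega_{\Sigma}$, which has independent interest. The alternative you sketch at the end, via Grauert--Riemenschneider and Grothendieck duality applied to $\mathcal{R}t_*\sshf{\mathbb{P}(E_L)}$, is in fact the closest in spirit to what the paper actually does.
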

\begin{proof}
Assuming Claim \ref{claim} and
pushing forward the exact sequence
\begin{equation*}
0\rightarrow \omega_{\mathbb{P}(E_L)}\rightarrow\omega_{\mathbb{P}(E_L)}(\Phi)\rightarrow i_*\omega_{\Phi}\rightarrow 0
\end{equation*}
by $t$, we see that the statement of this theorem is equivalent to showing
\begin{equation*}
(t_*\circ i_*)\omega_{\Phi}=0 \mbox{ if and only if } H^n(X, \mathcal{O}_X)=0.
\end{equation*}
By virtue of Diagram \ref{Geometry of secant varieties}, it suffices to show that $q_*\omega_{\Phi}=0$ on $X$.
For a closed point $x \in X$, it is equivalent to showing $q_*\omega_{\Phi}\otimes k(x)=0$ by Nakayama's lemma. But since $q$ is flat by Lemma \ref{flat}, applying Grauert's theorem we have
\begin{equation*}
q_*\omega_{\Phi}\otimes k(x)\iso H^0(F_x, \omega _{\Phi}\mid _{F_x})\iso H^0(F_x, \omega_{F_x}).
\end{equation*}
Recall that $F_x$ is the blowing up of $X$ at $x$, so $H^0(F_x, \omega_{F_x})=H^0(X, \omega_X)$
 and the last cohomology group is zero if and only if $H^n(X, \sshf{X})=0$.
\end{proof}

\begin{claim}\label{claim}
\begin{equation*}
t_*\omega_{\mathbb{P}(E_{L})}(\Phi)\iso \omega_{\Sigma}.
\end{equation*}
\end{claim}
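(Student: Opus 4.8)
The plan is to deduce the claim from Grothendieck duality for the proper birational morphism $t$, converting the identity $Rt_*\sshf{\mathbb{P}(E_{L})}(-\Phi)\iso\is{X/\Sigma}$ — already established in the proof of Theorem \ref{Du Bois} together with Proposition \ref{vanishing of higher direct images} — into a statement about dualizing complexes, and then reading off the relevant cohomology sheaf.

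First I would record that, since $\mathbb{P}(E_{L})$ is smooth of dimension $2n+1$, its dualizing complex is $\omega^{\bullet}_{\mathbb{P}(E_{L})}\iso\omega_{\mathbb{P}(E_{L})}[2n+1]$, and that $t^{!}\omega^{\bullet}_{\Sigma}\iso\omega^{\bullet}_{\mathbb{P}(E_{L})}$ by compatibility of dualizing complexes under $(-)^{!}$. Applying $R\mathcal{H}om(-,\omega^{\bullet}_{\mathbb{P}(E_{L})})$ to the line bundle $\sshf{\mathbb{P}(E_{L})}(-\Phi)$ produces $\omega_{\mathbb{P}(E_{L})}(\Phi)[2n+1]$, so Grothendieck duality, combined with $Rt_*\sshf{\mathbb{P}(E_{L})}(-\Phi)\iso\is{X/\Sigma}$, gives
\[
Rt_*\paren{\omega_{\mathbb{P}(E_{L})}(\Phi)}[2n+1]\iso R\mathcal{H}om_{\Sigma}\paren{\is{X/\Sigma},\ \omega^{\bullet}_{\Sigma}}.
\]
Taking $(-(2n+1))$-th cohomology, the left-hand side is $h^{0}\paren{Rt_*\omega_{\mathbb{P}(E_{L})}(\Phi)}=t_*\omega_{\mathbb{P}(E_{L})}(\Phi)$, because the higher direct images $R^{i}t_*$ with $i>0$ land in cohomological degrees strictly above $-(2n+1)$. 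It therefore remains to identify $\mathcal{E}xt^{-(2n+1)}_{\Sigma}\paren{\is{X/\Sigma},\omega^{\bullet}_{\Sigma}}$ with $\omega_{\Sigma}=\mathcal{E}xt^{-(2n+1)}_{\Sigma}\paren{\sshf{\Sigma},\omega^{\bullet}_{\Sigma}}$.

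For this I would apply $R\mathcal{H}om_{\Sigma}(-,\omega^{\bullet}_{\Sigma})$ to the ideal sequence $\ses{\is{X/\Sigma}}{\sshf{\Sigma}}{i_*\sshf{X}}$ and examine the long exact sequence of $\mathcal{E}xt$ sheaves. The crucial input is duality for the closed immersion $i:X\embedding\Sigma$, namely $R\mathcal{H}om_{\Sigma}(i_*\sshf{X},\omega^{\bullet}_{\Sigma})\iso i_*\omega^{\bullet}_{X}\iso i_*\omega_{X}[n]$, since $X$ is smooth of dimension $n$; hence $\mathcal{E}xt^{j}_{\Sigma}(i_*\sshf{X},\omega^{\bullet}_{\Sigma})=0$ for every $j\neq -n$. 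As $n\ge 1$, both $-(2n+1)$ and $-2n$ differ from $-n$, so the two terms of the long exact sequence flanking $\mathcal{E}xt^{-(2n+1)}_{\Sigma}(\is{X/\Sigma},\omega^{\bullet}_{\Sigma})$ that come from $i_*\sshf{X}$ vanish, yielding $\mathcal{E}xt^{-(2n+1)}_{\Sigma}(\is{X/\Sigma},\omega^{\bullet}_{\Sigma})\iso\mathcal{E}xt^{-(2n+1)}_{\Sigma}(\sshf{\Sigma},\omega^{\bullet}_{\Sigma})=\omega_{\Sigma}$, which proves the claim.

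The main obstacle I anticipate is bookkeeping rather than any genuine difficulty: one must fix the normalization $\omega_{\Sigma}=h^{-(2n+1)}(\omega^{\bullet}_{\Sigma})$ consistently throughout, verify $t^{!}\omega^{\bullet}_{\Sigma}\iso\omega^{\bullet}_{\mathbb{P}(E_{L})}$, and — most importantly — confirm that $Rt_*\sshf{\mathbb{P}(E_{L})}(-\Phi)$ is a genuine sheaf concentrated in degree $0$. This last point is exactly where Proposition \ref{vanishing of higher direct images} is indispensable, since otherwise the right-hand $R\mathcal{H}om$ would absorb contributions from the higher direct images and the degree count would fail. Once the concentration of the $\mathcal{E}xt$ sheaves of $i_*\sshf{X}$ in the single degree $-n$ is pinned down, the rest is formal.
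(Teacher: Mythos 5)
Your argument is correct, and it finishes the proof by a genuinely different (and cleaner) route than the paper's. The first half coincides: both proofs feed the identity $\mathcal{R}t_*\sshf{\mathbb{P}(E_L)}(-\Phi)\iso\is{X/\Sigma}$ (which rests on Proposition \ref{vanishing of higher direct images} and the computation of $t_*\sshf{\mathbb{P}(E_L)}(-\Phi)$ from the proof of Theorem \ref{Du Bois}) into Grothendieck duality for $t$ and extract the $-(2n+1)$-th cohomology, arriving at $t_*\omega_{\mathbb{P}(E_L)}(\Phi)\iso\mathcal{E}xt^{-(2n+1)}_{\Sigma}\paren{\is{X/\Sigma},\omega^{\bullet}_{\Sigma}}$. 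The divergence is in identifying this $\mathcal{E}xt$ sheaf with $\omega_{\Sigma}$. The paper localizes at a closed point $x\in X$, applies local duality to turn the $\mathcal{E}xt$ into the local cohomology $H^{2n+1}_x$, uses $H^i_x(\sshf{X_x})=0$ for $i>n$ to identify $H^{2n+1}_x$ of $\is{X/\Sigma}$ with that of $\sshf{\Sigma}$, and then must descend an isomorphism of completions to an isomorphism of stalks by means of a natural map constructed from the reflexivity of $\omega_{\Sigma}$. You instead stay sheaf-theoretic: duality for the closed immersion $i\colon X\embedding\Sigma$ gives $R\mathcal{H}om_{\Sigma}(i_*\sshf{X},\omega^{\bullet}_{\Sigma})\iso i_*\omega_X[n]$, concentrated in degree $-n$, and since $-(2n+1)$ and $-2n$ both differ from $-n$ for $n\ge 1$, the long exact sequence of $\mathcal{E}xt$ sheaves applied to $\ses{\is{X/\Sigma}}{\sshf{\Sigma}}{i_*\sshf{X}}$ yields the identification outright. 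The two dimension counts ($H^{>n}_x(\sshf{X_x})=0$ versus the concentration of $i_*\omega^{\bullet}_X$ in degree $-n$) are the same fact in dual guises, but your version dispenses with the injective hull, Matlis duality, and the completion-descent step, so there is nothing left to verify about naturality of the final isomorphism. Both approaches, of course, depend essentially on Proposition \ref{vanishing of higher direct images} to know that $\mathcal{R}t_*\sshf{\mathbb{P}(E_L)}(-\Phi)$ is a sheaf in degree zero, as you correctly flag.
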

\begin{proof}
First note that there is a natural map $t_*\omega_{\mathbb{P}(E_{L})}(\Phi)\rightarrow\omega_{\Sigma}$ by the following natural maps
\begin{equation*}
t_*\omega_{\mathbb{P}(E_{L})}(\Phi)\rightarrow  j_*(t_*\omega_{\mathbb{P}(E_{L})}(\Phi)|_U)\rightarrow j_*(\omega_{\Sigma}|_U)\iso \omega_{\Sigma}.
\end{equation*}
Here $j: \Sigma \setminus X\rightarrow\Sigma$ is the open immersion and
the last isomorphism comes from the fact that the sheaf $\omega_{\Sigma}$ is reflexive.
So to prove the claim it suffices to prove that
\begin{equation*}
t_*\omega_{\mathbb{P}(E_{L})}(\Phi)_x\iso {\omega_{\Sigma}}_x
\end{equation*}
for every closed point $x\in \Sigma$.

From now on we fix a closed point $x\in X \subset \Sigma$ and denote by $\mathcal{O}_{\Sigma _x}$ the local ring of
$\Sigma$ at $x$. By Proposition \ref{vanishing of higher direct images} we have
\begin{equation*}
\mathcal{R} t_*\sshf{\mathbb{P}(E_{L})}(-\Phi)\iso t_*\sshf{\mathbb{P}(E_{L})}(-\Phi),
\end{equation*}
so applying the functor $\mathcal{R}Hom_{\Sigma}( \:, \omega _{\Sigma}^{\bullet})$  then localizing at $x$ and denoting the local ring of
$\Sigma$ at $x$ by $\mathcal{O}_{\Sigma _x}$,
 we get the isomorphisms of complexes
\begin{eqnarray*}
\mathcal{R}t_*\omega ^{\bullet}_{\mathbb{P}(E_{L})}(\Phi) \otimes
\mathcal{O}_{\Sigma _x} &\iso & \mathcal{R}Hom_{\Sigma}(\mathcal{R} t_*\sshf{\mathbb{P}(E_{L})}(-\Phi), \omega _{\Sigma}^{\bullet})\otimes
\mathcal{O}_{\Sigma _x}\\
&\iso & \mathcal{R}Hom_{\Sigma}( t_*\sshf{\mathbb{P}(E_{L})}(-\Phi), \omega _{\Sigma}^{\bullet})\otimes
\mathcal{O}_{\Sigma _x}\\
&\iso &\mathcal{R}Hom_{\Sigma_x}( t_*\sshf{\mathbb{P}(E_{L})}(-\Phi)_x, \omega _{\Sigma_x}^{\bullet}),
\end{eqnarray*}
where the first isomorphism follows from Grothendieck Duality theorem and the last
one is by Lemma \ref{dualizing sheaf}.
By taking the $-i$th cohomology we see that
\begin{equation*}
R^{2n+1-i}t_*\omega_{\mathbb{P}(E_{L})}(\Phi) \otimes
\mathcal{O}_{\Sigma _x} \iso \mathcal{E}xt^{-i}(t_*\sshf{\mathbb{P}(E_{L})}(-\Phi)_x, \omega _{\Sigma_x}^{\bullet}).
\end{equation*}
In particular, when $i=2n+1$, we get the isomorphism
\begin{equation} \label{new isomo 2}
t_*\omega_{\mathbb{P}(E_{L})}(\Phi) \otimes
\mathcal{O}_{\Sigma _x} \iso \mathcal{E}xt^{-(2n+1)}(t_*\sshf{\mathbb{P}(E_{L})}(-\Phi)_x, \omega _{\Sigma_x}^{\bullet}).
\end{equation}

Let $I$ be the injective hull of $\mathcal{O}_{\Sigma _x}$.
Apply $Hom(\:, I)$ to the right hand side of equation (\ref{new isomo 2}), then by (\ref{equ}) we have
\begin{eqnarray}\label{qq}
 &&Hom(\mathcal{E}xt^{-(2n+1)}(t_*\sshf{\mathbb{P}(E_{L})}(-\Phi)_x, \omega _{\Sigma_x}^{\bullet}), I)\\
\nonumber &\iso & H^{2n+1}_x(t_*\sshf{\mathbb{P}(E_{L})}(-\Phi)_x)\\
\nonumber&\iso & H^{2n+1}_x(\mathcal{O}_{\Sigma_x}),
\end{eqnarray}
where the second isomorphism comes from the following sequence
\begin{equation*}
0\rightarrow t_*\sshf{\mathbb{P}(E_{L})}(-\Phi)\iso \is{X/{\Sigma}}
\rightarrow \mathcal{O}_{\Sigma} \rightarrow \mathcal{O}_X\rightarrow 0
\end{equation*}
and the fact that  $H^i_x(\mathcal{O}_{X_x})=0$ for all $i>n$.

Apply $Hom(\:, I)$ to equation (\ref{qq}), by \cite[$\S$V Corollary 6.5]{Har66} we have an isomorphism of the completions at $x$
\begin{eqnarray*}
&&\mathcal{E}xt^{-(2n+1)}(t_*\sshf{\mathbb{P}(E_{L})}(-\Phi)_x, \omega _{\Sigma_x}^{\bullet})^{\wedge}_x\\
&\iso & Hom (H^{2n+1}_x(t_*\sshf{\mathbb{P}(E_{L})}(-\Phi)_x), I)\\
&\iso & Hom(H^{2n+1}_x(\mathcal{O}_{\Sigma_x}), I)\\
&\iso & \mathcal{E}xt^{-(2n+1)}(\mathcal{O}_{\Sigma_x},\omega _{\Sigma_x}^{\bullet} )^{\wedge}_x\\
&\iso & (\omega _{\Sigma_x}) ^{\wedge}_x.
\end{eqnarray*}
But by equation (\ref{new isomo 2}) there is  a natural map
\begin{equation*}
\mathcal{E}xt^{-(2n+1)}(t_*\sshf{\mathbb{P}(E_{L})}(-\Phi)_x, \omega _{\Sigma_x}^{\bullet})\iso t_*\omega_{\mathbb{P}(E_{L})}(\Phi)\otimes
\mathcal{O}_{\Sigma _x}\rightarrow \omega _{\Sigma_x},
\end{equation*}
so isomorphism of the completions implies the isomorphism
of the local rings, that is,
\begin{equation*}
\mathcal{E}xt^{-(2n+1)}(t_*\sshf{\mathbb{P}(E_{L})}(-\Phi)_x, \omega _{\Sigma_x}^{\bullet})\iso \omega _{\Sigma_x}.
\end{equation*}
Thus
\begin{equation*}
t_*\omega_{\mathbb{P}(E_{L})}(\Phi) \otimes
\mathcal{O}_{\Sigma _x} \iso \omega _{\Sigma_x}.
\end{equation*}
This completes the proof of the claim.
\end{proof}

\begin{remark}
We emphasize that validity of Theorem \ref{pushforward of dualizing sheaf} does not rely on $H^{i}(X, \shf{O}_X)=0$ for $1\le i \le {n-1}$, or equivalently Cohen-Macaulayness of $\Sigma$.
If $\Sigma$ is Cohen-Macaulay, then claim \ref{claim} and hence Theorem \ref{pushforward of dualizing sheaf}
follow immediately from \cite[Theorem 3.1]{KSS}, since we already proved that $\Sigma$ is Du Bois.
\end{remark}

%%The following corollary is an easy consequence of  Theorem \ref{pushforward of dualizing sheaf},
%%\begin{corollary}
%%Under our assumption \ref{assumption}.
%%Kawamata-Viewheg vanishing theorem holds on $\Sigma(X, L)$.
%%\end{corollary}

%%\begin{remark}
%%By Theorem \ref{Du Bois}, we know that in this situation $\Sigma(X, L)$ is Du Bois.
%%It is well known that Kodaira vanishing theorem holds on Du Bois singular varieties.
%%\end{remark}

\section{A multiplicity formula}

The following proposition shows that more positivity of $L$ may lead to worse singularities in a certain sense.

\begin{proposition}
The Samuel multiplicity of $\Sigma(X, L)$ at a closed point $x\in X$ is given by $L^n-2^n$.
\end{proposition}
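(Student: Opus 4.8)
The plan is to identify the Samuel multiplicity $e_x(\Sigma)$ with the degree of the projectivized tangent cone $\mathbb{P}(C_x\Sigma)$, and then to transport this to the resolution $t:\mathbb{P}(E_L)\to\Sigma$, where the normal geometry of the fiber is pinned down by Lemma \ref{ullery}. Recall that for a closed point $x$ on a variety $Z$ of dimension $d$, if $b:\tilde Z=Bl_xZ\to Z$ is the blow-up with exceptional divisor $E=\mathbb{P}(C_xZ)$, then $e_x(Z)=(-1)^{d-1}(E^{d})_{\tilde Z}$, since $\deg\mathbb{P}(C_xZ)=\int_E c_1(\sshf{\tilde Z}(-E)|_E)^{d-1}$. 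More generally, for any projective birational $f:W\to Z$ from a smooth $W$ with $\mathfrak{m}_x\sshf{W}=\sshf{W}(-D)$ invertible and $D$ supported on $f^{-1}(x)$, the universal property of the blow-up gives $h:W\to\tilde Z$ with $h^*E=D$, and the projection formula for the birational $h$ yields $e_x(Z)=(-1)^{d-1}(D^{d})_W$. I would apply this with $Z=\Sigma$ and $d=2n+1$.

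First I would produce such a $W$. Using the Cartesian square \eqref{Geometry of secant varieties}, the scheme-theoretic fiber $t^{-1}(x)$ is the reduced smooth variety $F_x\iso Bl_xX$, that is $\mathfrak{m}_x\sshf{\mathbb{P}(E_L)}=\is{F_x/\mathbb{P}(E_L)}$. Since $F_x$ is a smooth subvariety of codimension $n+1$ in the smooth variety $\mathbb{P}(E_L)$, I would take $W=Bl_{F_x}\mathbb{P}(E_L)$, a smooth variety whose exceptional divisor is $\mathcal{E}\iso\mathbb{P}(\shf{N}_{F_x/\mathbb{P}(E_L)})$. Then $\mathfrak{m}_x\sshf{W}=\sshf{W}(-\mathcal{E})$, the composite $W\to\mathbb{P}(E_L)\to\Sigma$ is projective and birational, and the formula above gives $e_x(\Sigma)=(-1)^{2n}(\mathcal{E}^{2n+1})_W=(\mathcal{E}^{2n+1})_W$.

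The remaining step is a Segre-class computation on the projective bundle $\mathcal{E}\to F_x$. Writing $\sshf{W}(-\mathcal{E})|_{\mathcal{E}}=\sshf{\mathbb{P}(\shf{N})}(1)$ and pushing forward, the self-intersection $(\mathcal{E}^{2n+1})_W$ becomes $\int_{F_x}s_n\paren{\shf{N}^*_{F_x/\mathbb{P}(E_L)}}$ up to the standard sign bookkeeping. By Lemma \ref{ullery}, $\shf{N}^*_{F_x/\mathbb{P}(E_L)}\iso\sshf{F_x}^{\oplus n}\oplus b^*_xL(-2E_x)$, so with $M:=b^*_xL(-2E_x)$ the total Chern class is $c(\shf{N}^*)=1+c_1(M)$ and the Segre term collapses to a single power of $c_1(M)$. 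Tracking the constants, this produces $e_x(\Sigma)=\int_{F_x}c_1(M)^n=(b^*_xL-2E_x)^n$. Finally, on $F_x=Bl_xX$ one has $b^*_xL\cdot E_x=0$ and $E_x^{n}=(-1)^{n-1}$, so every mixed monomial drops out and $(b^*_xL-2E_x)^n=(b^*_xL)^n+(-2)^nE_x^n=L^n-2^n$, which is the asserted value.

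I expect the main obstacle to be the bookkeeping that makes this rigorous rather than the final intersection number: verifying that the scheme-theoretic fiber is exactly the reduced $F_x$ (so that blowing up $F_x$ genuinely blows up $\mathfrak{m}_x$ and $\mathcal{E}$ carries no extra multiplicity), checking that $h:W\to Bl_x\Sigma$ is birational so the projection formula applies over the possibly singular $Bl_x\Sigma$, and fixing the signs relating $(\mathcal{E}^{2n+1})_W$, the degree of the tangent cone, and $s_n(\shf{N}^*)$ in one consistent convention. Once these are settled, the input of Lemma \ref{ullery} makes the computation immediate. As a sanity check, when $n=1$ the blow-up is trivial, $E_x$ is the point $x$ with $E_x^1=1$, and the formula reads $\deg L-2$, recovering the classical value for curves.
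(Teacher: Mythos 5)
Your proposal is correct and follows essentially the same route as the paper: both reduce the Samuel multiplicity to the $n$-th Segre class of $\shf{N}_{F_x/\mathbb{P}(E_L)}$ via the birational invariance of Segre classes (the paper cites Fulton directly where you unwind it through the explicit blow-up $Bl_{F_x}\mathbb{P}(E_L)$ and the self-intersection of its exceptional divisor), and both then conclude from Lemma \ref{ullery} together with $(b^*_xL)^i\cdot E_x^{n-i}=0$ for $0<i<n$ and $E_x^n=(-1)^{n-1}$. The points you flag as needing care (that $t^{-1}(x)=F_x$ scheme-theoretically, and the sign conventions relating $(\mathcal{E}^{2n+1})$ to $s_n$) are exactly the content of the paper's appeal to the Cartesian diagram \eqref{Geometry of secant varieties} and to \cite[Chap.~4]{Fulton98}, so nothing essential is missing.
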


\begin{proof}
Since the multiplicity $\mu$ at $x$ coincides with the top Segre class of $(\{x\}, \Sigma(X, L))$, which is invariant under a birational proper morphism (cf.~\cite[Chap. 4]{Fulton98}), we have
\begin{eqnarray*}
% \nonumber to remove numbering (before each equation)
   \mu_x\Sigma(X, L)&=& s_0(\{x\}, \Sigma(X, L)) \\
   &=& s_n(Bl_xX, \mathbb{P}(E_L))\\
   &=& s_n\paren{\shf{N}_{{Bl_xX}/\mathbb{P}(E_L)}}\\
   &=& (-1)^n (-b_x^*L+2E)^n\quad\quad\quad\text{by Lemma } \ref{ullery}\\
   &=& (-1)^n\sum^n_{i=0}{n \choose i}(-1)^i(b_x^*L)^i(2E)^{n-i}\\
   &=& L^n+2^n(-E)^n\quad\quad\quad\quad\quad\text{for}\; 0<i<n,  (b^*L)^i\cdot E^{n-i}=0\\
   &=& L^n-2^n,
\end{eqnarray*}
which completes the proof.
\end{proof}

\bibliography{Singularities_of_secant_varieties}{}

\providecommand{\bysame}{\leavevmode\hbox to3em{\hrulefill}\thinspace}
\providecommand{\MR}{\relax\ifhmode\unskip\space\fi MR }
% \MRhref is called by the amsart/book/proc definition of \MR.
\providecommand{\MRhref}[2]{%
  \href{http://www.ams.org/mathscinet-getitem?mr=#1}{#2}
}
\providecommand{\href}[2]{#2}
\begin{thebibliography}{10}

\bibitem{AH12}
Valery Alexeev and Christopher~D Hacon, \emph{Non-rational centers of log
  canonical singularities}, Journal of Algebra \textbf{369} (2012), 1--15.

\bibitem{AS}
Dave Anderson and Alan Stapledon, \emph{Schubert varieties are log {F}ano over
  the integers}, Proc. Amer. Math. Soc. \textbf{142} (2014), 409--411.

\bibitem{Beauville96}
Arnaud Beauville, \emph{Complex algebraic surfaces}, no.~34, Cambridge
  University Press, 1996.

\bibitem{Bertram}
Aaron Bertram, \emph{Moduli of rank-2 vector bundles, theta divisors, and the
  geometry of curves of curves in projective spaces}, J. Differential Geometry
  \textbf{35} (1992), no.~2, 429--469.

\bibitem{EinIshii13}
Lawrence Ein and Shihoko Ishii, \emph{Singularities with respect to
  {M}ather--{J}acobian discrepancies}, Commutative Algebra and Noncommutative
  Algebraic Geometry \textbf{2} (2015), 125--168.

\bibitem{EL93}
Lawrence Ein and Robert Lazarsfeld, \emph{Syzygies and {K}oszul cohomology of
  smooth projective varieties of arbitrary dimension}, Inventiones mathematicae
  \textbf{111} (1993), no.~1, 51--67.

\bibitem{Fujino12}
Osamu Fujino and Yoshinori Gongyo, \emph{On canonical bundle formulas and
  subadjunctions}, The Michigan Mathematical Journal \textbf{60} (2012),
  255--264.

\bibitem{Fulton98}
Willam Fulton, \emph{Intersection theory}, vol.~2, Ergebnisse der Mathematik
  und ihrer Grenzgebiete (3), Springer-Verlag, Berlin, 1998.

\bibitem{GK}
Patrick Graf and S{\'a}ndor Kov\'{a}cs, \emph{Potentially {D}u {B}ois
  singularities}, Journal of Singularities \textbf{8} (2014), 117--134.

\bibitem{Har66}
Robin Hartshorne, \emph{Residues and duality}, Springer-Verlag, Berlin, 1966.

\bibitem{Hartshorne77}
\bysame, \emph{Algebraic geometry}, Springer-Verlag, New York, 1977.

\bibitem{Ishii}
Shihoko Ishii, \emph{On isolated {G}orenstein singularities}, Mathematische
  Annalen \textbf{270} (1985), no.~3, 541--554.

\bibitem{kollar:book}
J.~Koll{\'a}r, \emph{Singularities of the minimal model program}, Tracts in
  Mathematics, vol. 200, Cambridge University Press, 2013, with the
  collaboration of S. J. Kov{\'a}cs.

\bibitem{KK}
J{\'a}nos Koll\'{a}r and S{\'a}ndor Kov\'{a}cs, \emph{Log canonical
  singularities are {D}u {B}ois}, J. Amer. Math. Soc. \textbf{23} (2010),
  no.~3, 91--813.

\bibitem{kollarMori08}
J{\'a}nos Koll{\'a}r and Shigefumi Mori, \emph{Birational geometry of algebraic
  varieties}, vol. 134, Cambridge University Press, 2008.

\bibitem{Kovacs}
S{\'a}ndor Kov{\'a}cs, \emph{Irrational centers}, Pure and Applied Mathematics
  Quarterly dedicated to Eckart Viehweg \textbf{7} (2011), no.~4, 1495--1516.

\bibitem{KSS}
S{\'a}ndor Kov{\'a}cs, Karl Schwede, and Karen Smith, \emph{The canonical sheaf
  of {D}u {B}ois singularities}, Advances in Mathematics \textbf{224} (2010),
  no.~4, 1618--1640.

\bibitem{Kumar}
Shrawan Kumar and Karl Schwede, \emph{Richardson varieties have kawamata log
  terminal singularities}, International Mathematics Research Notices (2014),
  no.~3, 842--864.

\bibitem{SV09}
Jessica Sidman and Peter Vermeire, \emph{Syzygies of the secant variety of a
  curve}, Algebra \& Number Theory \textbf{3} (2009), no.~4, 445--465.

\bibitem{JS}
Jason Starr, \emph{The {K}odaira dimension of spaces of rational curves on low
  degree hypersurfaces}, arXiv:math/0305432v1. (2003).

\bibitem{Steenbrink83}
Joseph Steenbrink, \emph{Mixed hodge structures associated with isolated
  singularities}, Proceedings of Symposia in Pure Mathematics \textbf{40}
  (1983), 513--536.

\bibitem{Ullery14}
Brooke Ullery, \emph{On the normality of secant varieties}, Advances in
  Mathematics \textbf{288} (2016), 631--647.

\bibitem{IV}
Israel Vainsencher, \emph{Complete collineations and blowing up determinantal
  ideals}, Math. Ann. (1984), no.~267, 417--432.

\bibitem{Vermeire}
Peter Vermeire, \emph{Some results on secant varieties leading to a geometric
  flip construction.}, Compositio Math \textbf{125} (2001), no.~3, 263--282.

\bibitem{Vermeire08}
\bysame, \emph{Regularity and normality of the secant variety to a projective
  curve}, Journal of Algebra \textbf{319} (2008), no.~3, 1264--1270.

\end{thebibliography}
\bibliographystyle{amsplain}

\end{document}